\let\Re=\undefined\DeclareMathOperator*{\Re}{Re}
\let\Im=\undefined\DeclareMathOperator*{\Im}{Im}
\def\C{{\mathbb C}} 
\def\R{{{\mathbb R}}}
\def\Z{{{\mathbb Z}}}
\newcommand{\eps}{{\varepsilon}}
\newcommand{\Tmax}{{T_{max}}}
\newcommand{\uhi}{{u_{hi}}}
\newcommand{\ulo}{{u_{lo}}}
\newcommand{\pl}{{P_{lo}}}
\newcommand{\ph}{{P_{hi}}}
\theoremstyle{plain}
\newtheorem{theorem}{Theorem}
\newtheorem{proposition}[theorem]{Proposition}
\newtheorem{lemma}[theorem]{Lemma}
\newtheorem{corollary}[theorem]{Corollary}
\theoremstyle{definition}
\newtheorem{definition}[theorem]{Definition}
\newtheorem{remark}[theorem]{Remark}
\numberwithin{equation}{section} \numberwithin{theorem}{section}
\begin{document}

\title[Global well-posedness for the cubic NLS in 4D]{Global well-posedness and scattering for the defocusing cubic NLS in four dimensions}
\author{Monica Vi\c{s}an}
\address{Department of Mathematics\\UCLA\\Los Angeles, CA 90095}
%\subjclass{35Q55} 

\vspace{-0.3in}
\begin{abstract}
In this short note we present a new proof of the global well-posedness and scattering result for the defocusing energy-critical NLS in four space dimensions obtained previously by Ryckman and Visan \cite{RV}.
The argument is inspired by the recent work of Dodson \cite{Dodson:d>2} on the mass-critical NLS.
\end{abstract}

\maketitle

\section{Introduction}

We study the initial-value problem for the defocusing cubic nonlinear Schr\"odinger equation in $\R\times\R^4:$
\begin{equation}\label{nls}
\begin{cases}
i u_t +\Delta u = |u|^2 u\\
u(0,x) = u_0(x)\in \dot H^1_x(\R^4),
\end{cases}
\end{equation}
where $u(t,x)$ is a complex-valued function on spacetime $\R_t\times \R^4_x$.

This equation is termed \emph{energy-critical} since the scaling symmetry 
\begin{equation}\label{scaling}
u(t,x)\mapsto u^\lambda (t,x) :=\lambda u(\lambda^2t, \lambda x),
\end{equation}
leaves invariant not only the class of solutions to \eqref{nls}, but also the (conserved) energy:
\begin{equation}\label{energy}
E(u(t))=\int_{\R^4} \frac{1}{2}|\nabla u (t,x)|^2 + \frac{1}{4}|u(t,x)|^4dx.
\end{equation}
Note that by Sobolev embedding, the energy space is precisely $\dot H^1_x(\R^4)$, which is also the space of the initial data.

Let us start by making the notion of solution more precise.

\begin{definition}[Solution]\label{D:solution} A function $u: I \times \R^4 \to \C$ on a non-empty time interval $0\in I \subset \R$ is a (strong) \emph{solution} to \eqref{nls} if it lies in the class $C^0_t \dot H^1_x(K \times \R^4) \cap L^{6}_{t,x}(K \times \R^4)$ for all compact $K \subset I$, and obeys the Duhamel formula
\begin{align}\label{old duhamel}
u(t) = e^{it\Delta} u(0) - i \int_{0}^{t} e^{i(t-s)\Delta}|u(s)|^2 u(s) \, ds
\end{align}
for all $t \in I$.  We refer to the interval $I$ as the \emph{lifespan} of $u$. We say that $u$ is a \emph{maximal-lifespan solution} if the solution cannot be extended to any strictly larger interval. We say that $u$ is a
\emph{global solution} if $I= \R$.
\end{definition}

Our main result is a new proof of the following:

\begin{theorem}[Global well-posedness and scattering]\label{T:main}
Let $u_0\in \dot H^1_x(\R^4)$.  Then there exists a unique global (strong) solution $u\in C_t^0\dot H^1_x(\R\times \R^4)$ to \eqref{nls}.  Moreover, this solution satisfies
$$
\int_{\R}\int_{\R^4} |u(t,x)|^6\, dx\, dt \leq C\bigl(\|u_0\|_{\dot H^1_x}\bigr).
$$
As a consequence of these spacetime bounds, the solution $u$ scatters, that is, there exist asymptotic states $u_\pm\in \dot H^1_x$ such that
\begin{align}\label{scattering}
\bigl\|u(t)-e^{it\Delta}u_\pm\bigr\|_{\dot H^1_x} \to 0 \quad \text{as}\quad t\to \pm\infty.
\end{align}
Furthermore, for any $u_+ \in \dot H^1_x$ $($or $u_- \in \dot H^1_x$$)$ there exists a unique global solution $u$ to \eqref{nls} such that \eqref{scattering} holds.
\end{theorem}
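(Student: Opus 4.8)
The plan is to follow the now-standard concentration-compactness/rigidity scheme of Kenig--Merle, as adapted to the energy-critical setting. First I would establish a local well-posedness and stability theory via Strichartz estimates: for data in $\dot H^1_x(\R^4)$ one has short-time existence, a small-data global theory, and a perturbation lemma allowing one to upgrade approximate solutions with small error to genuine solutions, all measured in the scattering-size norm $L^6_{t,x}$. Standard arguments then reduce Theorem \ref{T:main} to proving the a priori bound $\|u\|_{L^6_{t,x}(\R\times\R^4)} \leq C(E(u_0))$ for all finite-energy solutions; scattering and the existence of wave operators follow from this bound together with the stability theory.

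The core of the argument is to rule out a minimal-energy counterexample. Assuming the spacetime bound fails, a concentration-compactness argument (a profile decomposition for the propagator $e^{it\Delta}$ in $\dot H^1_x$, combined with the stability theory) produces a minimal-energy blowup solution whose orbit $\{u(t)\}$ is precompact in $\dot H^1_x$ modulo the symmetries of the equation — that is, there exist $x(t)\in\R^4$ and $N(t)>0$ so that $\{N(t)^{-1}u(t, N(t)^{-1}(x - x(t)))\}$ lies in a compact subset of $\dot H^1_x$. One further reduces, using that the time interval is infinite in at least one direction, to a scenario with $N(t)\equiv 1$ (a ``quasi-soliton''), possibly after passing to a rescaled limit; the remaining task is to preclude this almost periodic solution.

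The heart of Dodson's method, which this note imports, is the construction of a suitable monotonicity formula — a frequency-localized interaction Morawetz inequality — that is strong enough to handle the energy-critical scaling. The plan is to decompose $u = \ulo + \uhi$ using Littlewood--Paley projections $\pl$, $\pmed$, $\ph$ at frequency cutoffs tied to $N(t)$, derive an approximate equation for the low-frequency part $\ulo$, and run an interaction Morawetz estimate for $\ulo$. Compactness forces the solution to have negligible mass at low frequencies and negligible energy at high frequencies, so the error terms generated by the frequency truncation can be absorbed; the resulting a priori bound on a Morawetz-type quantity is incompatible with the almost periodicity (the quasi-soliton would have to radiate a fixed positive amount of ``momentum'' over each unit time interval, contradicting boundedness), yielding the desired contradiction.

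The main obstacle, and where essentially all the work lies, is the last step: making the frequency-localized interaction Morawetz estimate actually close in four dimensions. One must choose the Morawetz weight and the frequency-truncation parameters correctly, control the commutator and nonlinear error terms arising from the Littlewood--Paley projections (these are the most delicate estimates, requiring careful use of Bernstein's inequality, the fractional chain rule, and the bilinear Strichartz estimate), and exploit the conserved energy together with the compactness of the orbit to show these errors are lower-order. A secondary technical point is handling the case where $N(t)$ is not bounded below, which requires a separate rescaling/limiting argument to reduce to the $N(t)\sim 1$ regime before the Morawetz estimate can be applied.
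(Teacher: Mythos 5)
Your overall frame (reduction via concentration compactness to an almost periodic minimal counterexample, then a Dodson-style frequency-localized interaction Morawetz estimate) matches the paper's strategy, but there are two genuine gaps. First, the claimed reduction to a quasi-soliton with $N(t)\equiv 1$ is not available for the energy-critical problem with general (non-radial) data: the rescaling argument only yields $N(t)\geq 1$ on half of the maximal lifespan, with $N(t)$ piecewise constant on characteristic intervals, and one must then split into \emph{two} scenarios according to whether $\int_0^{\Tmax} N(t)^{-1}\,dt$ is finite or infinite. The rapid frequency-cascade scenario (finite integral, $N(t)\to\infty$ at the endpoint, possibly with $\Tmax<\infty$) cannot be rescaled away and your proposal never addresses it; in the paper it is precluded by a separate argument combining the long-time Strichartz bound, the no-waste Duhamel formula (Proposition~\ref{P:duhamel}), and conservation of mass to show the solution would have zero mass. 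Second, you propose to run the interaction Morawetz inequality on the \emph{low}-frequency piece $\ulo$. That is the right choice in Dodson's mass-critical setting, but in the energy-critical setting the solution lies only in $\dot H^1_x$, so $\ulo$ need not have finite mass and the a priori bound on the Morawetz action (which involves $\|\phi\|_{L_t^\infty L_x^2}^3\|\phi\|_{L_t^\infty\dot H^1_x}$) is not available. The paper instead applies Proposition~\ref{P:im} to the \emph{high}-frequency piece $u_{\geq N}$, whose mass is finite by Bernstein and, crucially, small after choosing $N$ small.

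Relatedly, your statement that ``compactness forces the errors to be absorbable'' hides the main quantitative input. The error terms produced by the frequency truncation must be controlled uniformly over compact intervals $I$ on which $\|u\|_{L^6_{t,x}}$ is arbitrarily large, and almost periodicity alone does not give this. The engine of the paper is the long-time Strichartz estimate (Theorem~\ref{T:LTS}): $\|\nabla u_{\leq N}\|_{L_t^2L_x^4(I\times\R^4)}\lesssim_u 1+N^{3/2}K^{1/2}$ with $K=\int_I N(t)^{-1}\,dt$, together with the smallness refinement \eqref{E:small LTS} for $N$ below a threshold $N_0(\eta)$, proved by induction on the frequency using the bilinear Strichartz and paraproduct estimates. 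Both scenarios are then dispatched with this bound: it upgrades to the mass bound in the cascade case, and it furnishes exactly the $\eta\bigl(N^{-3}+K\bigr)$ control of the Morawetz error terms in the quasi-soliton case, which is what makes the localized Morawetz inequality contradict $\int_0^{\Tmax}N(t)^{-1}\,dt=\infty$. Without this estimate (or a substitute playing the same role), your final step does not close.
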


Theorem~\ref{T:main} was proved by Ryckman and Visan \cite{RV} building upon the monumental work of Colliander--Keel--Staffilani--Takaoka--Tao \cite{CKSTT:gwp} who established the equivalent result for the energy-critical nonlinear Schr\"odinger equation in three space dimensions.  For the analogous result in higher dimensions, see \cite{Berbec, thesis:art, Monica:thesis}.  For previous results for spherically-symmetric data, see \cite{borg:scatter, grillakis, tao:gwp radial}.

Recently, there has been a lot of activity towards proving the natural global well-posedness and scattering conjecture for the focusing energy-critical NLS.  This was made possible by the work of Kenig and Merle \cite{KM:NLS} who proved this conjecture in spatial dimensions $d=3,4,5$ for spherically-symmetric initial data.  Subsequently, Killip and Visan \cite{Berbec} established the focusing conjecture for arbitrary initial data and spatial dimensions
$d\geq 5$.  The focusing conjecture is still open in dimensions $d=3,4$ for arbitrary initial data.

In this short note we present a new proof of global well-posedness and scattering for \eqref{nls} inspired by the proof of the analogous theorem for the defocusing mass-critical NLS in dimensions $d\geq 3$ given by Dodson \cite{Dodson:d>2}; for previous results on the mass-critical NLS in both the defocusing and focusing cases with spherically-symmetric initial data, see \cite{KTV,KVZ, TVZ:sloth}.

We contend that the arguments in \cite{Berbec} offer much simplicity over those of \cite{thesis:art} (which builds upon \cite{CKSTT:gwp, RV}) and have also the advantage of treating both the defocusing and focusing cases for
$d\geq 5$.  As such, one is faced only with the task of simplifying, if possible, the arguments of  \cite{CKSTT:gwp, RV} in dimensions $d=3,4$.  The simple argument presented here does not cover the three dimensional case due to the absence of certain endpoint estimates.

The main reason for writing this paper is to demonstrate the applicability to the energy-critical setting of the new ideas introduced by Dodson \cite{Dodson:d>2}.  While we rely on other sources for several key ingredients in the proof, the combined argument is still shorter than that in \cite{RV} and, more importantly, it is more modular and hence easier to understand.  

\subsection{Outline of the proof}\label{SS:outline}

We will use the concentration-compactness approach and hence argue by contradiction.  As proven in \cite{KM:NLS, Berbec}, failure of Theorem~\ref{T:main} implies the existence of very special types of counterexamples.  Such counterexamples are then shown to have a wealth of properties not immediately apparent from their construction, so many properties in fact, that they cannot exist.

While we will make some further reductions later, the main property of the special counterexamples is almost periodicity modulo symmetries:

\begin{definition}[Almost periodicity modulo symmetries]\label{D:ap}
A solution $u$ to \eqref{nls} with lifespan $I$ is said to be \emph{almost periodic (modulo symmetries)} if there exist functions $N: I \to \R^+$, $x:I\to \R^4$, and $C: \R^+ \to \R^+$ such that for all $t \in I$ and $\eta > 0$,
$$
\int_{|x-x(t)| \geq C(\eta)/N(t)} \bigl|\nabla u(t,x)\bigr|^2\, dx + \int_{|\xi| \geq C(\eta) N(t)} |\xi|^{2}\, | \hat u(t,\xi)|^2\, d\xi\leq \eta .
$$
We refer to the function $N(t)$ as the \emph{frequency scale function} for the solution $u$, to $x(t)$ as the \emph{spatial center function}, and to $C(\eta)$ as the \emph{compactness modulus function}.
\end{definition}

\begin{remark} \label{R:small freq}
Almost periodic solutions obey the following: For each $\eta>0$ there exists $c(\eta)>0$ so that for all $t\in I$,
$$
\int_{|x-x(t)| \leq c(\eta)/N(t)} \bigl|\nabla u(t,x)\bigr|^2\, dx + \int_{|\xi| \leq c(\eta) N(t)} |\xi|^{2}\, | \hat u(t,\xi)|^2\, d\xi\leq \eta .
$$
\end{remark}

As shown in \cite[Corollary~3.6]{KTV} (see also \cite[Lemma 5.18]{ClayNotes}), the modulation parameters of almost periodic solutions obey a local constancy property.  In particular, we have

\begin{lemma}[Local constancy property]\label{L:local const}
Let $u:I\times\R^4\to \C$ be a maximal-lifespan almost periodic solution to \eqref{nls}.  Then there exists a small number $\delta$, depending only on $u$, such that if $t_0 \in I$ then
\begin{align*}
\bigl[t_0 - \delta N(t_0)^{-2}, t_0 + \delta N(t_0)^{-2}\bigr] \subset I
\end{align*}
and
\begin{align*}
N(t) \sim_u N(t_0) \quad \text{whenever} \quad |t-t_0| \leq \delta N(t_0)^{-2}.
\end{align*}
\end{lemma}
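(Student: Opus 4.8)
The plan is to use compactness modulo symmetries to upgrade the local-in-time theory for \eqref{nls} to statements that are \emph{uniform over $t_0\in I$}, and then to bootstrap. Since the $\dot H^1_x$ norm and the $L^6_{t,x}$ norm of a (linear or nonlinear) evolution are invariant under the scaling \eqref{scaling}, it suffices to prove the two assertions after rescaling each $t_0$ so that $N(t_0)=1$; after such a rescaling, Definition~\ref{D:ap} says exactly that the normalized datum $N(t_0)^{-1}u(t_0,N(t_0)^{-1}\,\cdot\,+x(t_0))$ ranges over a fixed precompact subset $K$ of $\dot H^1_x$ as $t_0$ ranges over $I$. We may assume $u\not\equiv0$ (if $u\equiv0$ take $N\equiv1$), in which case the small-data theory forces $\rho:=\inf_{t\in I}\|u(t)\|_{\dot H^1_x}>0$; moreover, precompactness of $K$ gives uniform frequency tightness: for each $\eta>0$ there is $R(\eta)>0$ such that, for all $t\in I$,
$$
\int_{|\xi|\ge R(\eta)N(t)}|\xi|^2\,|\hat u(t,\xi)|^2\,d\xi+\int_{|\xi|\le N(t)/R(\eta)}|\xi|^2\,|\hat u(t,\xi)|^2\,d\xi\le\eta .
$$

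\emph{The containment.} I would invoke the standard local well-posedness and stability theory for \eqref{nls} (see, e.g., \cite{CKSTT:gwp,RV,ClayNotes}): there is $\eta_0>0$ such that if $t_0\in J$ and $\|e^{i(t-t_0)\Delta}u(t_0)\|_{L^6_{t,x}(J\times\R^4)}\le\eta_0$, then $u$ extends to all of $J$ and, by \eqref{old duhamel} and the Strichartz inequality, $\sup_{t\in J}\|u(t)-e^{i(t-t_0)\Delta}u(t_0)\|_{\dot H^1_x}$ is bounded by a quantity that tends to $0$ as $\|e^{i(t-t_0)\Delta}u(t_0)\|_{L^6_{t,x}(J\times\R^4)}\to0$. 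Now $\|e^{it\Delta}f\|_{L^6_{t,x}(\R\times\R^4)}\lesssim\|f\|_{\dot H^1_x}<\infty$ by Strichartz and Sobolev embedding, so by dominated convergence $\|e^{it\Delta}f\|_{L^6_{t,x}([-\delta,\delta]\times\R^4)}\to0$ as $\delta\to0$, uniformly for $f$ in the compact set $K$. Undoing the rescaling, this yields $\delta_0=\delta_0(u)>0$ so that, for every $t_0\in I$, the interval $J_{t_0}:=[t_0-\delta_0 N(t_0)^{-2},\,t_0+\delta_0 N(t_0)^{-2}]$ lies in $I$ and
$$
\sup_{t\in J_{t_0}}\bigl\|u(t)-e^{i(t-t_0)\Delta}u(t_0)\bigr\|_{\dot H^1_x}<\tfrac{\rho}{2},
$$
after possibly shrinking $\delta_0$ (which stays a function of $u$ alone).

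\emph{Bounding $N$ on $J_{t_0}$.} Here I would compare frequency profiles. Fix $\eta$ with $\sqrt\eta<\rho/4$, and write $P_E$ for Fourier restriction to a set $E$, so that $\|P_Ef\|_{\dot H^1_x}^2=\int_E|\xi|^2\,|\hat f(\xi)|^2\,d\xi$. Suppose, for contradiction, that $N(t_1)>R(\eta)^2\,N(t_0)$ for some $t_1\in J_{t_0}$; rescaling so $N(t_0)=1$ and setting $A:=\{\,R(\eta)^{-1}<|\xi|<R(\eta)\,\}$, the uniform tightness gives $\|P_Au(t_0)\|_{\dot H^1_x}\ge\|u(t_0)\|_{\dot H^1_x}-\sqrt\eta\ge\rho-\sqrt\eta$, whereas $A\subseteq\{|\xi|\le N(t_1)/R(\eta)\}$ forces $\|P_Au(t_1)\|_{\dot H^1_x}\le\sqrt\eta$. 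Since $e^{i(t_1-t_0)\Delta}$ is a Fourier multiplier of modulus one, it commutes with $P_A$ and preserves frequency-localized $\dot H^1_x$ norms, so
$$
\tfrac{\rho}{2}>\bigl\|u(t_1)-e^{i(t_1-t_0)\Delta}u(t_0)\bigr\|_{\dot H^1_x}\ge\|P_Au(t_0)\|_{\dot H^1_x}-\|P_Au(t_1)\|_{\dot H^1_x}\ge\rho-2\sqrt\eta>\tfrac{\rho}{2},
$$
a contradiction. Hence $N(t_1)\le M\,N(t_0)$ for all $t_1\in J_{t_0}$ and all $t_0\in I$, with $M:=R(\eta)^2$ depending only on $u$.

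\emph{The bootstrap.} Finally set $\delta:=\delta_0 M^{-2}$, a function of $u$ alone. If $|t_1-t_0|\le\delta N(t_0)^{-2}$ then $t_1\in J_{t_0}$, so $N(t_1)\le M\,N(t_0)$ by the previous step; consequently $N(t_0)^{-2}\le M^2 N(t_1)^{-2}$, whence $|t_0-t_1|\le\delta_0 M^{-2}\cdot M^2 N(t_1)^{-2}=\delta_0 N(t_1)^{-2}$, i.e., $t_0\in J_{t_1}$, and the previous step applied at $t_1$ gives $N(t_0)\le M\,N(t_1)$. Therefore $[t_0-\delta N(t_0)^{-2},\,t_0+\delta N(t_0)^{-2}]\subset J_{t_0}\subset I$ and $N(t)\sim_u N(t_0)$ on this interval, as required. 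I expect the main obstacle to be securing the \emph{uniformity in $t_0$} obtained in the second paragraph — both of the existence time and of the $\dot H^1_x$-displacement bound — since this is exactly where precompactness modulo symmetries is indispensable; the frequency-profile comparison and the rescaling of $\delta$ are then routine.
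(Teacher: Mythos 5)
The paper does not actually prove this lemma: it quotes it from \cite[Corollary~3.6]{KTV} and \cite[Lemma~5.18]{ClayNotes}, where the argument is a soft compactness-and-contradiction one (assume failure along a sequence of times, rescale, extract an $\dot H^1_x$ limit of the normalized data, and contradict maximality, respectively the behaviour of the frequency scales, via local well-posedness and stability around the limit). Your proof is correct and uses the same ingredients arranged in a more direct, quantitative way: precompactness of the normalized orbit makes the free evolution uniformly small in $L^6_{t,x}$ on rescaled intervals of a fixed length $\delta_0$, the local theory then gives both $J_{t_0}\subset I$ and a uniform $\dot H^1_x$ displacement bound, and the frequency-annulus comparison (Definition~\ref{D:ap} plus Remark~\ref{R:small freq}, together with the facts that your projection $P_A$ has norm one on $\dot H^1_x$ and commutes with $e^{it\Delta}$) forbids $N$ from jumping by more than a fixed factor $M=R(\eta)^2$; the bootstrap with $\delta=\delta_0M^{-2}$ correctly upgrades the one-sided bound to $N(t)\sim_u N(t_0)$, with all constants depending only on $u$. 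Your version buys explicit constants and avoids subsequence extraction; the cited proof is shorter because it invokes the perturbation theory as a black box.

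Two small repairs are needed. First, the lower bound $\rho=\inf_{t\in I}\|u(t)\|_{\dot H^1_x}>0$ does not follow from ``small-data theory'' as you state it: small data only yields global existence and scattering, which is not by itself incompatible with the hypotheses of the lemma as stated. The clean justification is conservation of the defocusing energy: if $\|\nabla u(t_n)\|_{L^2_x}\to0$ then Sobolev embedding forces $E(u)=0$, hence $u\equiv0$. Second, in the degenerate case $u\equiv 0$ you may not redefine $N$, since $N(t)$ is part of the data of Definition~\ref{D:ap} and is then completely arbitrary; the statement should simply be read, as in the cited sources and in every application in this paper (where the solutions have infinite $L^6_{t,x}$ norm), under the standing convention that the almost periodic solution is not identically zero.
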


We recall next a consequence of the local constancy property; see \cite[Corollary~3.7]{KTV} and \cite[Corollary~5.19]{ClayNotes}.

\begin{corollary}[$N(t)$ at blowup]\label{C:blowup criterion}
Let $u:I\times\R^4\to \C$ be a maximal-lifespan almost periodic solution to \eqref{nls}.  If $T$ is any finite endpoint of $I$, then $N(t) \gtrsim_u |T-t|^{-1/2}$; in particular, $\lim_{t\to T} N(t)=\infty$.
\end{corollary}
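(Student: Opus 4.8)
The plan is to deduce the bound directly from the local constancy property (Lemma~\ref{L:local const}) by exploiting maximality of the lifespan; no new estimates for \eqref{nls} are needed. By the time-reversal symmetry of \eqref{nls} (which preserves the class of almost periodic solutions, merely reflecting $N(t)$ and $x(t)$), it suffices to treat the case in which $T$ is a finite \emph{right} endpoint of $I$, so that $\sup I = T < \infty$ and $I$ contains no point strictly larger than $T$.

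First I would fix the small number $\delta = \delta(u) > 0$ furnished by Lemma~\ref{L:local const}. For an arbitrary $t_0 \in I$ with $t_0 < T$, that lemma gives $[\,t_0 - \delta N(t_0)^{-2},\ t_0 + \delta N(t_0)^{-2}\,] \subset I$. Since $I$ lies to the left of $T$, the right endpoint of this interval cannot exceed $T$; that is, $t_0 + \delta N(t_0)^{-2} \le T$, equivalently $\delta\, N(t_0)^{-2} \le T - t_0 = |T - t_0|$. Solving for $N(t_0)$ yields $N(t_0) \ge \delta^{1/2}\, |T - t_0|^{-1/2}$, which is precisely the asserted estimate $N(t_0) \gtrsim_u |T-t_0|^{-1/2}$. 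Letting $t_0 \to T$ along $I$ then forces $N(t_0) \to \infty$, giving the stated limit.

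There is essentially no analytic obstacle here: the entire content is already encoded in Lemma~\ref{L:local const}, and what remains is the bookkeeping observation that a maximal-lifespan solution cannot have its interval of existence enlarged, so the interval $[\,t_0 - \delta N(t_0)^{-2},\ t_0 + \delta N(t_0)^{-2}\,]$ supplied by local constancy must fit inside $I$ and hence to the left of $T$. The only points one must state with some care are the reduction to a right endpoint via time reversal, and the (trivial) fact that $\sup I = T$ precludes points of $I$ beyond $T$, which is what turns the containment in Lemma~\ref{L:local const} into an upper bound for $\delta N(t_0)^{-2}$.
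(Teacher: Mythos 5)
Your proof is correct and is exactly the standard argument behind this corollary: the paper itself does not spell out a proof but cites \cite{KTV} and \cite{ClayNotes}, where the bound is obtained precisely as you do, by noting that the interval $[t_0-\delta N(t_0)^{-2},\,t_0+\delta N(t_0)^{-2}]$ furnished by Lemma~\ref{L:local const} must fit inside the maximal lifespan $I$, so $\delta N(t_0)^{-2}\le |T-t_0|$. Nothing is missing.
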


Finally, we will need the following result linking the frequency scale function $N(t)$ of an almost periodic solution $u$ and its Strichartz norms: 

\begin{lemma}[Spacetime bounds]\label{L:ST-N(t)}
Let $u$ be an almost periodic solution to \eqref{nls} on a time interval $I$.  Then
\begin{align}\label{ST via N(t)}
\int_I N(t)^2\,dt \lesssim_u \|\nabla u\|_{L_t^2 L_x^4(I\times \R^4)}^2 \lesssim_u 1+ \int_I N(t)^2\,dt.
\end{align}
\end{lemma}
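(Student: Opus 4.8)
The plan is to prove the two inequalities in \eqref{ST via N(t)} separately, both by exploiting the almost periodicity of $u$ together with a partition of $I$ into characteristic subintervals on which $N(t)$ is essentially constant. To this end, fix the small number $\delta = \delta(u)$ furnished by Lemma \ref{L:local const} and subdivide $I$ into consecutive intervals $I_k = [t_k, t_{k+1}]$ on which $N$ varies by at most a factor of $2$; by Lemma \ref{L:local const} each such $I_k$ can be taken to have length $|I_k| \sim_u N(t_k)^{-2}$, so that $\int_{I_k} N(t)^2\, dt \sim_u 1$ and $N(t) \sim_u N(t_k)$ throughout $I_k$. (One should allow one exceptional short interval at each finite endpoint of $I$; by Corollary \ref{C:blowup criterion} these contribute negligibly, or can simply be absorbed into the neighboring $I_k$.)

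For the lower bound $\int_I N(t)^2\, dt \lesssim_u \|\nabla u\|_{L^2_t L^4_x(I\times\R^4)}^2$, I would show that on each $I_k$ the quantity $\|\nabla u\|_{L^2_{t,x}(I_k\times\R^4)}^2$ is bounded below by a constant depending only on $u$. The point is that if this norm were too small on some $I_k$, then by the Strichartz inequality and a standard continuity/perturbation argument one could extend the solution past the endpoint of $I$ with small $L^6_{t,x}$ norm — contradicting Corollary \ref{C:blowup criterion} or the maximality of the lifespan. More directly: almost periodicity localizes $\nabla u(t)$ in frequency to $|\xi| \sim N(t)$ up to tails of size $\lesssim \eta$, and Remark \ref{R:small freq} rules out concentration of the energy at frequencies $\ll N(t)$; combined with conservation of energy $E(u) \lesssim_u 1$ this forces $\|\nabla u(t)\|_{L^4_x}$ to be bounded below by a positive constant $c(u) > 0$ away from a negligible set, whence $\|\nabla u\|_{L^2_{t,x}(I_k)}^2 \gtrsim_u |I_k| \sim_u N(t_k)^{-2} \cdot N(t_k)^2 /N(t_k)^2$... more carefully, $\|\nabla u\|_{L^2_t L^4_x(I_k)}^2 \gtrsim_u |I_k| \gtrsim_u N(t_k)^{-2}$, and summing over $k$ gives $\|\nabla u\|_{L^2_t L^4_x(I)}^2 \gtrsim_u \sum_k N(t_k)^{-2} \cdot N(t_k)^2 \cdot N(t_k)^{-2}$; the bookkeeping here is routine once one notes $\int_{I_k} N(t)^2\, dt \sim_u 1$.

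For the upper bound $\|\nabla u\|_{L^2_t L^4_x(I)}^2 \lesssim_u 1 + \int_I N(t)^2\, dt$, I would again work interval by interval. On each $I_k$ one expects $\|\nabla u\|_{L^2_t L^4_x(I_k)}^2 \lesssim_u 1$, which summed over the $k$ gives $\lesssim_u \#\{k\} \sim_u \int_I N(t)^2\, dt$ (the additive $1$ accounts for the case of a single short interval, e.g. when $I$ itself is short). To get the per-interval bound one combines the Strichartz estimate on $I_k$ — controlling $\|\nabla u\|_{L^2_t L^4_x(I_k)}$ by $\|\nabla u(t_k)\|_{L^2_x} + \|\nabla(|u|^2 u)\|_{L^2_t L^{4/3}_x(I_k)}$ — with the energy bound $\|\nabla u(t_k)\|_{L^2_x} \lesssim_u 1$, a fractional Leibniz estimate $\|\nabla(|u|^2 u)\|_{L^{4/3}_x} \lesssim \|u\|_{L^4_x}^2 \|\nabla u\|_{L^4_x}$, Sobolev embedding $\|u(t)\|_{L^4_x} \lesssim \|\nabla u(t)\|_{L^2_x} \lesssim_u 1$, and a standard bootstrap using the smallness of $\|\nabla u\|_{L^2_t L^4_x(J)}$ on sufficiently short subintervals $J \subset I_k$ (which in turn follows because the $L^6_{t,x}$ norm, hence by interpolation the relevant Strichartz norm, is finite and can be made small by shrinking $J$). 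The main obstacle is the second step: establishing that $\|\nabla u\|_{L^2_t L^4_x(I_k)}^2 \lesssim_u 1$ uniformly in $k$ requires the subdivision to be fine enough that a perturbative/bootstrap argument closes on each piece, and reconciling this with the local-constancy subdivision (whose intervals are only of length $\sim_u N(t_k)^{-2}$, not a priori short enough for the Strichartz bootstrap) may require a further refinement of the partition together with a compactness argument to obtain a uniform modulus of continuity — this is where the almost periodicity of $u$ is used most essentially.
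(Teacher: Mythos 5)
Your lower-bound argument contains a genuine error of scaling. You claim that almost periodicity plus the energy bound forces $\|\nabla u(t)\|_{L^4_x}\geq c(u)>0$; but this quantity is not scale-invariant: under $u\mapsto\lambda u(\lambda^2t,\lambda x)$ it scales like $\lambda$, so the correct consequence of the concentration in Definition~\ref{D:ap} is $\|\nabla u(t)\|_{L^4_x}\gtrsim_u N(t)$ (for a nontrivial almost periodic solution: take $\eta$ small, so that $\int_{|x-x(t)|\leq C(\eta)/N(t)}|\nabla u(t,x)|^2dx\gtrsim_u1$, and apply H\"older on that ball, whose volume is $\sim C(\eta)^4N(t)^{-4}$). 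With your constant lower bound, summing over characteristic intervals yields only $\|\nabla u\|_{L_t^2L_x^4(I\times\R^4)}^2\gtrsim_u\sum_k|I_k|=|I|$, which is not $\int_IN(t)^2\,dt$ (the lemma does not assume $N(t)\geq1$); this is exactly where your bookkeeping trails off, and it is not ``routine'' -- it fails. With the correct pointwise bound $\gtrsim_uN(t)$ one gets $\int_I\|\nabla u(t)\|_{L^4_x}^2dt\gtrsim_u\int_IN(t)^2dt$ directly, with no interval decomposition at all. Your alternative route (small $\|\nabla u\|_{L_t^2L_x^4}$ on some $I_k$ would let one extend past an endpoint of $I$, contradicting Corollary~\ref{C:blowup criterion}) is also not available: $I$ is an arbitrary interval inside the lifespan, not assumed maximal, so no contradiction arises.

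For the upper bound, your outline (the number of characteristic intervals is $\lesssim_u1+\int_IN(t)^2dt$, and one needs $\|\nabla u\|_{L_t^2L_x^4(J_k\times\R^4)}\lesssim_u1$ uniformly in $k$) is the standard one, but the uniform per-interval bound is precisely what you leave open with ``may require a further refinement of the partition together with a compactness argument''; as written this is an acknowledged hole rather than a proof, and it must be closed by the perturbative local theory that underlies Lemma~\ref{L:local const}. For comparison, the paper sidesteps both halves by quoting Lemma 5.21 of \cite{ClayNotes}, which gives $\int_IN(t)^2dt\lesssim_u\|u\|_{L^6_{t,x}(I\times\R^4)}^6\lesssim_u1+\int_IN(t)^2dt$, and then deduces \eqref{ST via N(t)}: the first inequality via the interpolation $\|u(t)\|_{L^6_x}\lesssim\|\nabla u(t)\|_{L^4_x}^{1/3}\|\nabla u(t)\|_{L^2_x}^{2/3}$ and the energy bound, the second via the Strichartz inequality. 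If you intend a self-contained proof, you are in effect reproving that cited lemma, and both the $\gtrsim_uN(t)$ pointwise bound and the uniform local Strichartz bound need to be carried out explicitly.
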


\begin{proof}
We recall that Lemma~5.21 in \cite{ClayNotes} shows that
$$
\int_I N(t)^2\,dt \lesssim_u \int_I\int_{\R^4} |u(t,x)|^6\, dx\, dt  \lesssim_u 1+ \int_I N(t)^2\,dt.
$$
The first inequality in \eqref{ST via N(t)} follows from the first inequality in the display above, Sobolev embedding, and interpolation with the energy norm, while the second inequality in \eqref{ST via N(t)} follows from the second inequality in the display above and an application of the Strichartz inequality.
\end{proof}

With these preliminaries out of the way, we can now describe the first major milestone in the proof of Theorem~\ref{T:main}:

\begin{theorem}[Reduction to almost periodic solutions, \cite{KM:NLS, Berbec}]\label{T:reduct}
Suppose that Theorem~\ref{T:main} failed.  Then there exists a maximal-lifespan solution $u:I\times\R^4\to \C$ to \eqref{nls} which is almost periodic and blows up both forward and backward in time in the sense that for all $t_0\in I$,
$$
\int_{t_0}^{\sup I}\int_{\R^4} |u(t,x)|^6\, dx\, dt=\int_{\inf I}^{t_0}\int_{\R^4} |u(t,x)|^6\, dx\, dt=\infty.
$$
\end{theorem}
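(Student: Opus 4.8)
This is the standard concentration-compactness reduction, and I would follow the blueprint of Kenig--Merle \cite{KM:NLS} and Killip--Visan \cite{Berbec} (see also \cite{ClayNotes}); what follows is the outline. The first step is to identify a critical energy. For $E \geq 0$ let $L(E)$ be the supremum of $\int_I\int_{\R^4} |u(t,x)|^6\,dx\,dt$ over all solutions $u$ to \eqref{nls} with lifespan $I$ and $E(u) \leq E$. The small-data theory together with the Strichartz inequality gives $L(E) < \infty$ for all sufficiently small $E$, whereas the failure of Theorem~\ref{T:main} means $L(E) = \infty$ for some $E$; since $L$ is nondecreasing, there is a threshold $E_c \in (0,\infty)$ with $L(E) < \infty$ for $E < E_c$ and $L(E) = \infty$ for $E \geq E_c$. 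The goal becomes to manufacture an almost periodic solution sitting exactly at this threshold.

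Next I would pick a sequence of solutions $u_n$ with $E(u_n) \to E_c$ and $\int\int |u_n|^6 \to \infty$, and, using time-translation invariance, translate each $u_n$ so that its $L^6_{t,x}$ norm is infinite on both $[0,\sup I_n)$ and $(\inf I_n, 0]$. Feeding the bounded sequence $u_n(0) \in \dot H^1_x(\R^4)$ into the linear profile decomposition (of Bahouri--G\'erard/Keraani type, adapted to $\dot H^1_x(\R^4)$), one writes $u_n(0) = \sum_{j=1}^J g_n^j e^{it_n^j\Delta}\phi^j + w_n^J$, with asymptotic orthogonality of the symmetry parameters $g_n^j$ (scalings and spatial translations) and of the times $t_n^j$, asymptotic decoupling of the energy among the profiles and the remainder, and $\limsup_n \|e^{it\Delta}w_n^J\|_{L^6_{t,x}} \to 0$ as $J \to \infty$.

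The crux of the argument, and the step I expect to be the main obstacle, is the Palais--Smale reduction of this decomposition to a single profile. If at least two profiles carried nontrivial energy, then by the energy decoupling each profile would have energy at most $E_c - \delta$ for some $\delta > 0$; hence, by the definition of $E_c$, the associated nonlinear profiles $v_n^j$ (built from the maximal-lifespan solution with data $\phi^j$, or from a scattering solution when $t_n^j \to \pm\infty$) would be global with uniformly bounded $L^6_{t,x}$ norms. Superposing them, $\sum_j v_n^j + e^{it\Delta}w_n^J$ would be an approximate solution to \eqref{nls} with bounded spacetime norm, and the energy-critical stability theory in four dimensions would then force $\int\int |u_n|^6$ to be bounded --- a contradiction. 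Making this rigorous requires the careful construction of the nonlinear profiles and a delicate check, via the asymptotic orthogonality of the parameters, that the mixed terms in the cubic nonlinearity are negligible in the relevant Strichartz norms; this is precisely where the full four-dimensional local and stability theory is used (and where the three-dimensional endpoint issues alluded to in the introduction would obstruct the same argument). The upshot is $J = 1$, $w_n^1 \to 0$ in $\dot H^1_x$, and --- since the two-sided blowup of $u_n$ prevents $t_n^1 \to \pm\infty$ --- one may take $t_n^1 \equiv 0$.

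Finally, undoing the symmetries $g_n^1$, let $u_\infty$ be the maximal-lifespan solution to \eqref{nls} with $u_\infty(0) = \phi^1$. The stability theory forces $u_\infty$ to blow up both forward and backward in time (otherwise $u_n$ could not, for $n$ large), and applying the threshold property once more gives $E(u_\infty) = E_c$; thus $u_\infty$ is a minimal blowup solution. To see it is almost periodic modulo symmetries, note that any time sequence $u_\infty(\tau_n)$ is again a sequence of data generating solutions with energy $E_c$ that blow up in both time directions, so the very same Palais--Smale argument produces a subsequence of $\{u_\infty(\tau_n)\}$ converging in $\dot H^1_x$ after applying suitable scalings and translations. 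In other words, the orbit $\{u_\infty(t) : t \in I\}$ is precompact in $\dot H^1_x(\R^4)$ modulo the scaling and translation symmetries, which is exactly the assertion that $u_\infty$ is almost periodic in the sense of Definition~\ref{D:ap}.
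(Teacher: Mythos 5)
Your outline is correct and follows essentially the same route as the paper, which does not reprove this theorem but cites \cite{KM:NLS, Berbec} (see also \cite{ClayNotes}), where exactly this argument appears: a critical energy threshold, the linear profile decomposition, reduction to a single profile via nonlinear profiles and the stability theory, and almost periodicity via the Palais--Smale property. No discrepancy to report.
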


The reduction to almost periodic solutions is by now a standard technique in the analysis of dispersive equations at critical regularity.  Their existence was first proved by Keraani \cite{keraani-l2} in the context of the mass-critical NLS and it was first used as a tool for proving global well-posedness by Kenig and Merle \cite{KM:NLS}.  As noted above, Theorem~\ref{T:reduct} was proved in \cite{KM:NLS, Berbec}; for other instances of the same techniques, see for example \cite{KenigMerle:wave, KenigMerle:H^1/2, KenigMerle:wave sup, KKSV, KSV, KTV, Berbec, ClayNotes, Miel, KV:wave, KV:wave radial, KVZ, TVZ:cc, TVZ:sloth}.

To continue, a simple rescaling argument (see, for example, the proof of Theorem~3.3 in \cite{TVZ:sloth}) allows us to restrict attention to almost periodic solutions that do not escape to arbitrarily low frequencies, at least on half of their maximal lifespan, say, on $[0, \Tmax)$.  Inspired by \cite{Dodson:d>2}, we further subdivide these into two classes depending on the control given by the interaction Morawetz inequality.  Finally, by
Lemma~\ref{L:local const}, we may subdivide $[0, \Tmax)$ into characteristic subintervals $J_k$ and set the frequency scale function $N(t)$ to be constant and equal to $N_k$ on each $J_k$.  Note that
$|J_k|=\delta N_k^{-2}$ and that we need to modify the compactness modulus function by a time-independent multiplicative factor.  Putting everything together, we obtain

\begin{theorem}[Two special scenarios for blowup]\label{T:enemies}
Suppose that Theorem \ref{T:main} failed.  Then there exists an almost periodic solution $u: [0, \Tmax)\times \R^4 \to \C$, such that
$$
\|u\|_{L^6_{t,x}( [0, \Tmax) \times \R^4)} =+\infty 
$$
and
$$
N(t)\equiv N_k \geq 1 \quad \text{for each} \quad t\in J_k \quad \text{where}\quad  [0, \Tmax)=\cup_k J_k.
$$
Furthermore,
$$
\text{either} \quad \int_0^{T_{\max}} N(t)^{-1}\, dt<\infty \quad\text{or}\quad \int_0^{T_{\max}}  N(t)^{-1}\, dt=\infty.
$$
\end{theorem}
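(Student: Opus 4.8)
The plan is to extract the desired counterexample from Theorem~\ref{T:reduct} by two successive reductions — first normalizing the frequency scale function so that it cannot return to arbitrarily small values, and then replacing it by a piecewise constant function — after which the claimed dichotomy for $\int_0^{\Tmax}N(t)^{-1}\,dt$ is a tautology.

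\emph{Step 1: Normalizing the frequency scale.} Supposing Theorem~\ref{T:main} to be false, Theorem~\ref{T:reduct} provides a maximal-lifespan almost periodic solution $\tilde u:\tilde I\times\R^4\to\C$ blowing up forward and backward in time; after a time translation I may assume $0\in\tilde I$. I would next invoke the rescaling argument of \cite[proof of Theorem~3.3]{TVZ:sloth} to reduce to an almost periodic solution on $[0,\Tmax)$ whose frequency scale function stays bounded below. If $\inf_{t\in[0,\sup\tilde I)}N(t)>0$ — or the analogous bound holds on $(\inf\tilde I,0]$, in which case one also reverses time — it suffices to restrict to the corresponding half-line and apply the scaling symmetry \eqref{scaling}, which multiplies the frequency scale function by a constant, so as to make this infimum at least $1$. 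The substantive case is when $N$ descends to $0$ along sequences approaching both endpoints; there one picks times $t_n$ along which $N$ attains a running minimum, rescales and recenters $\tilde u$ around $t_n$ at frequency $\sim N(t_n)$, and passes to a subsequential limit using the precompactness inherent in almost periodicity together with the stability theory for \eqref{nls}. I expect verifying that this limit is a nontrivial almost periodic blowup solution whose frequency scale is bounded below on a half-line to be the principal obstacle, although it is a standard maneuver in this setting; a final rescaling normalizes the lower bound to $1$.

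\emph{Step 2: Making $N(t)$ piecewise constant.} Let $u$ denote the solution from Step~1, with lifespan $[0,\Tmax)$ and $N(t)\ge 1$ throughout. By Lemma~\ref{L:local const} there is $\delta=\delta(u)>0$ such that each interval $I_{t_0}:=\bigl(t_0-\delta N(t_0)^{-2},\,t_0+\delta N(t_0)^{-2}\bigr)\cap[0,\Tmax)$ satisfies $N(t)\sim_u N(t_0)$ for $t\in I_{t_0}$. Since $\{I_{t_0}\}_{t_0\in[0,\Tmax)}$ is an open cover of $[0,\Tmax)$, I would extract a countable, locally finite subcover and disjointify it into consecutive intervals $J_k$ with $[0,\Tmax)=\bigcup_k J_k$, each $J_k$ contained in some $I_{\tau_k}$. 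Put $N_k:=N(\tau_k)$, so that $N(t)\sim_u N_k$ on $J_k$ and $N_k\ge 1$, and then redefine the frequency scale function to equal $N_k$ on $J_k$. As this alters $N(t)$ only by a multiplicative factor bounded by some $A=A(u)$, the almost periodicity of Definition~\ref{D:ap} is preserved provided the compactness modulus function $C(\eta)$ is replaced by $A\,C(\eta)$ — a time-independent change. (When $\Tmax<\infty$, Corollary~\ref{C:blowup criterion} ensures the $J_k$ genuinely exhaust $[0,\Tmax)$ rather than accumulating at an interior point, since $N$ cannot blow up there.)

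\emph{Step 3: The dichotomy.} With $u$ on $[0,\Tmax)=\bigcup_k J_k$ and $N(t)\equiv N_k\ge 1$ now in place, the blowup assertion $\|u\|_{L^6_{t,x}([0,\Tmax)\times\R^4)}=+\infty$ is precisely the forward-in-time blowup from Theorem~\ref{T:reduct}, which persists under scaling, translation, and time reversal. It then remains only to note that $\int_0^{\Tmax}N(t)^{-1}\,dt$ is either finite or infinite; this splits the remaining counterexamples into the two announced scenarios and finishes the argument.
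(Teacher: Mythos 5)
Your proposal follows essentially the same route as the paper, which itself only sketches this reduction in Section~\ref{SS:outline}: start from Theorem~\ref{T:reduct}, apply the rescaling/limiting argument of the proof of Theorem~3.3 in \cite{TVZ:sloth} to obtain $N(t)\geq 1$ on half the lifespan, then use Lemma~\ref{L:local const} to replace $N(t)$ by a piecewise-constant function on characteristic intervals $J_k$ at the cost of a time-independent adjustment of the compactness modulus, with the final dichotomy being trivial. Your extra detail (the subcover construction and the acknowledged compactness/stability step in the degenerate case) is consistent with the cited standard argument, so the proposal is correct and matches the paper's approach.
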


Thus, in order to prove Theorem~\ref{T:main} we have to preclude the existence of the almost periodic solutions described in Theorem~\ref{T:enemies}.  The main ingredient in the proof will be a long-time
Strichartz inequality; see Theorem~\ref{T:LTS}.  This is modeled on the long-time Strichartz inequality for almost periodic solutions to the mass-critical NLS in dimensions $d\geq 3$ derived by
Dodson \cite{Dodson:d>2}.  The proof of Theorem~\ref{T:LTS} is by induction; the recurrence relation is derived using the Strichartz inequality combined with a bilinear Strichartz inequality (Lemma~\ref{L:bilinear Strichartz})
and a paraproduct estimate (Lemma~\ref{L:neg deriv}).

In Section~\ref{S:no cascade}, we preclude the rapid frequency-cascade scenario, that is, almost periodic solutions as in Theorem~\ref{T:enemies} for which $ \int_0^{T_{\max}} N(t)^{-1}\, dt<\infty$.
The proof has two ingredients:  The first is the long-time Strichartz inequality in Theorem~\ref{T:LTS}.  The second ingredient is the following 

\begin{proposition}[No-waste Duhamel formula, \cite{ ClayNotes,TVZ:cc}]\label{P:duhamel}
Let $u: [0, \Tmax)\times \R^4 \to \C$ be a solution as in Theorem~\ref{T:enemies}.  Then for all $t\in[0, \Tmax)$,
\begin{align*}
u(t) = i \lim_{T\to\, \Tmax} \int_t^T e^{i(t-s) \Delta}  |u(s)|^2u(s)  \, ds,
\end{align*}
where the limits are to be understood in the weak $\dot H^1_x$ topology.
\end{proposition}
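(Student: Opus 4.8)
The plan is to start from the standard Duhamel formula \eqref{old duhamel} written forward from time $t$: for any $t < T < \Tmax$,
$$
u(t) = e^{i(t-T)\Delta} u(T) + i \int_t^T e^{i(t-s)\Delta} |u(s)|^2 u(s)\, ds,
$$
which is just a rearrangement of \eqref{old duhamel} applied at times $t$ and $T$ together with the group law for $e^{it\Delta}$. The proposition follows once I show that the free-evolution term $e^{i(t-T)\Delta} u(T)$ converges \emph{weakly} to $0$ in $\dot H^1_x$ as $T \to \Tmax$; then the Duhamel integral converges weakly to $u(t)$, giving the claimed identity (with the limit understood in the weak $\dot H^1_x$ topology).

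The main work is therefore the weak convergence $e^{i(t-T)\Delta} u(T) \rightharpoonup 0$. First I would note that $\|u(T)\|_{\dot H^1_x}$ is bounded uniformly in $T$ (by conservation of energy, which controls $\|\nabla u(T)\|_{L^2_x}$), so $e^{i(t-T)\Delta}u(T)$ is a bounded family in $\dot H^1_x$; hence it suffices to test against a dense class, say Schwartz functions $\phi$, and show $\langle e^{i(t-T)\Delta}u(T), \phi\rangle \to 0$, equivalently $\langle u(T), e^{-i(t-T)\Delta}\phi\rangle \to 0$. I split the inner product using almost periodicity: decompose $u(T)$ and $e^{-i(t-T)\Delta}\phi$ into pieces supported near $x(T)$ at frequency scale $\sim N(T)$ and the complementary tails. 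By Definition~\ref{D:ap} and Remark~\ref{R:small freq}, the bulk of $\nabla u(T)$ lives in the spatial ball $|x - x(T)| \lesssim C(\eta)/N(T)$ and the frequency annulus $c(\eta) N(T) \le |\xi| \le C(\eta) N(T)$, up to an error of size $\eta$ in $\dot H^1_x$; the contribution of the tails to the inner product is then $O(\eta \|\phi\|)$ and can be made arbitrarily small. For the bulk term, I use Corollary~\ref{C:blowup criterion} in the case $\Tmax < \infty$: there $N(T) \to \infty$, so the bulk of $u(T)$ is concentrated at arbitrarily high frequencies while $e^{-i(t-T)\Delta}\phi$ has a Fourier transform that (for fixed Schwartz $\phi$) is rapidly decaying; a straightforward frequency-localization estimate shows the overlap tends to $0$. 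In the case $\Tmax = \infty$, $N(T)$ need not blow up, so instead I would exploit dispersive decay of $e^{-i(t-T)\Delta}\phi$: as $T \to \infty$ this tends to $0$ in, e.g., $L^\infty_x$ (indeed in every $L^p_x$ with $p > 2$), while the bulk of $u(T)$ is spatially localized (in the ball around $x(T)$) and bounded in $\dot H^1_x \hookrightarrow L^4_x$; pairing a spatially-concentrated bounded-mass piece against a uniformly decaying function again yields a vanishing contribution. Combining the two cases and letting $\eta \to 0$ gives the weak convergence to $0$.

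The step I expect to be the main obstacle is the $\Tmax = \infty$ case, precisely because $N(t)$ is not forced to escape to infinity and one cannot simply invoke a frequency-separation argument; one must instead extract quantitative spatial localization of the bulk of $u(T)$ from almost periodicity and combine it carefully with the dispersive decay of the free Schrödinger evolution of a fixed test function, being attentive to the fact that the spatial center $x(T)$ is moving. However, this is a by-now standard argument (it appears in \cite{TVZ:cc, ClayNotes}), so rather than reproving it I would cite those references for the weak-limit statement and present only the short reduction above. The remaining details — boundedness in $\dot H^1_x$, density of Schwartz functions, and the $\eta$-splitting — are routine.
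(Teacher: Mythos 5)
The paper does not actually prove Proposition~\ref{P:duhamel}; it cites \cite{ClayNotes,TVZ:cc}, and your sketch is precisely the standard argument from those references (forward Duhamel plus weak vanishing of $e^{i(t-T)\Delta}u(T)$, using frequency escape via Corollary~\ref{C:blowup criterion} when $\Tmax<\infty$ and dispersive decay against the spatially concentrated bulk when $\Tmax=\infty$), so it is correct and takes essentially the same route. The only point worth making explicit is that the $\Tmax=\infty$ case closes cleanly here because $N(t)\geq 1$ keeps the concentration radius $C(\eta)/N(T)$ bounded, which is exactly the uniform spatial localization your pairing argument needs.
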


Using Proposition~\ref{P:duhamel} and the Strichartz inequality, we upgrade the information given by Theorem~\ref{T:LTS} to obtain that the rapid frequency-cascade solution has finite mass.  In fact, we will prove that the mass must be zero, and hence the solution must be zero, which contradicts the fact that the solution has infinite spacetime norm.

In Section~\ref{S:no soliton}, we preclude the quasi-soliton scenario, that is, almost periodic solutions as in Theorem~\ref{T:enemies} for which $ \int_0^{T_{\max}} N(t)^{-1}\, dt=\infty$.
The main ingredient in the proof is a frequency-localized interaction Morawetz inequality; see Proposition~\ref{P:flim}.  To establish it, we will rely on Theorem~\ref{T:LTS} to control the error terms introduced by
frequency-localizing the usual interaction Morawetz inequality.  As the frequency-localized interaction Morawetz inequality yields uniform control (in terms of the frequency above which we localize) over
$ \int_I N(t)^{-1}\, dt$ for all compact time intervals $I\subset[0, \Tmax)$, we derive a contradiction with the fact that  $ \int_0^{T_{\max}} N(t)^{-1}\, dt=\infty$ by simply taking the interval $I$ to be sufficiently large
inside $[0, \Tmax)$.

\subsection*{Acknowledgements}
The author was supported in part by the Sloan Foundation and NSF grant DMS-0901166.  This work was completed while the author was a Harrington Faculty Fellow at the University of Texas at Austin.

%%%%%%%%%%%%%%%%%%%%%%%%%%%%%%%%%%%%%%%%%%%%%%%%%%%%%%%%%%%%%%%%%%%%%%%%%%%%%%%%%%%%%%%%%%%
%
%
%                                   Section
%
%
%%%%%%%%%%%%%%%%%%%%%%%%%%%%%%%%%%%%%%%%%%%%%%%%%%%%%%%%%%%%%%%%%%%%%%%%%%%%%%%%%%%%%%%%%%%

\section{Notation and useful lemmas}\label{S:notation}
We will often use the notation $X \lesssim Y$ whenever there exists some constant $C$ so that $X \leq CY$.  Similarly, we will use $X \sim Y$ if $X \lesssim Y \lesssim X$.  If $C$ depends upon some additional parameters, we will indicate this with subscripts; for example, $X \lesssim_u Y$ denotes the assertion that $X \leq C_u Y$ for some $C_u$ depending on $u$.

We define the Fourier transform on $\R^4$ to be
$$
\hat f(\xi) :=(2\pi)^{-2} \int_{\R^4} e^{- i x \cdot \xi} f(x) dx.
$$
We will make frequent use of the fractional differential/integral operators $|\nabla|^s$ defined by
$$
\widehat{|\nabla|^sf}(\xi) :=|\xi|^s \hat f (\xi).
$$
These define the homogeneous Sobolev norms
$$
\|f\|_{\dot H^s_x} := \| |\nabla|^s f \|_{L^2_x }.
$$

We will frequently denote the nonlinearity in \eqref{nls} by $F(u)$, that is, $F(u):=|u|^2u$.  We will use the notation $\O(X)$ to denote a quantity that resembles $X$, that is, a finite linear combination of terms that look like $X$, but possibly with some factors replaced by their complex conjugates.  For example, we will write
\begin{equation*}
F(u+v) = \sum_{j=0}^3 \O(u^j v^{3-j}).
\end{equation*}

We will also need some Littlewood--Paley theory.  Specifically, let $\varphi(\xi)$ be a smooth bump supported in the ball $|\xi| \leq 2$ and equalling one on the ball $|\xi| \leq 1$.  For each dyadic number
$N \in 2^\Z$ we define the Littlewood--Paley operators
\begin{align*}
\widehat{P_{\leq N}f}(\xi) :=  \varphi(\xi/N)\hat f (\xi), \qquad \widehat{P_{> N}f}(\xi) :=  (1-\varphi(\xi/N))\hat f (\xi),
\end{align*}
\begin{align*}
\widehat{P_N f}(\xi) :=  (\varphi(\xi/N) - \varphi (2 \xi /N))
\hat f (\xi).
\end{align*}
Similarly, we can define $P_{<N}$, $P_{\geq N}$, and $P_{M < \cdot \leq N} := P_{\leq N} - P_{\leq M}$, whenever $M$ and $N$ are dyadic numbers.  We will frequently write $f_{\leq N}$ for
$P_{\leq N} f$ and similarly for the other operators.

The Littlewood--Paley operators commute with derivative operators, the free propagator, and complex conjugation.  They are self-adjoint and bounded on every $L^p_x$ and $\dot H^s_x$ space for
$1 \leq p \leq\infty$ and $s\geq 0$.  They also obey the following Sobolev and Bernstein estimates:
\begin{align*}
\| |\nabla|^{\pm s} P_N f\|_{L^p_x} \sim N^{\pm s} \| P_N f \|_{L^p_x}, \qquad \|P_N f\|_{L^q_x} \lesssim N^{\frac{4}{p}-\frac{4}{q}} \| P_N f\|_{L^p_x},
\end{align*}
whenever $s \geq 0$ and $1 \leq p \leq q \leq \infty$.

We use $L^q_tL^r_x$ to denote the spacetime norm
$$
\|u\|_{L_t^qL_x^r} :=\Bigl(\int_{\R}\Bigl(\int_{\R^4} |u(t,x)|^r dx \Bigr)^{q/r} dt \Bigr)^{1/q},
$$
with the usual modifications when $q$ or $r$ is infinity, or when the domain $\R \times \R^4$ is replaced by some
smaller spacetime region.  When $q=r$ we abbreviate $L^q_tL^r_x$ by $L^q_{t,x}$.

Let $e^{it\Delta}$ be the free Schr\"odinger propagator.  In physical space this is given by the formula
$$
e^{it\Delta}f(x) = \frac{1}{(4 \pi i t)^2} \int_{\R^4} e^{i|x-y|^2/4t} f(y) dy.
$$
In particular, the propagator obeys the \emph{dispersive inequality}
\begin{equation}\label{dispersive ineq}
\|e^{it\Delta}f\|_{L^\infty_x(\R^4)} \lesssim |t|^{-2}\|f\|_{L^1_x(\R^4)}
\end{equation}
for all times $t\neq 0$.  As a consequence of these dispersive estimates, one obtains the Strichartz estimates; see, for example, \cite{gv:strichartz, tao:keel, Strichartz}.

\begin{lemma}[Strichartz inequality]\label{L:Strichartz}
Let $I$ be a compact time interval and let $u : I\times\R^4 \rightarrow \C$ be a solution to the forced Schr\"odinger equation
\begin{equation*}
i u_t + \Delta u = G
\end{equation*}
for some function $G$.  Then we have
\begin{equation}
\|\nabla u\|_{L_t^qL_x^r(I\times\R^4)} \lesssim \|u(t_0)\|_{\dot H^1_x (\R^4)} + \|\nabla G \|_{L^{\tilde q'}_t L^{\tilde r'}_x (I\times\R^4)}
\end{equation}
for any time $t_0 \in I$ and any exponents $(q,r)$ and $(\tilde q,\tilde r)$ obeying
$$
\tfrac1q+\tfrac2r=\tfrac1{\tilde q}+\tfrac2{\tilde r} =1 \quad \text{and}\quad 2\leq q, \tilde q\leq \infty.
$$
Here, as usual, $p'$ denotes the dual exponent to $p$, that is $1/p + 1/p' = 1$.
\end{lemma}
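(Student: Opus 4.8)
The plan is to reduce everything to mapping properties of the free propagator and then invoke the Duhamel formula. Writing $u$ via Duhamel with base point $t_0$,
\[
u(t) = e^{i(t-t_0)\Delta}u(t_0) - i\int_{t_0}^{t} e^{i(t-s)\Delta}G(s)\,ds,
\]
and using that $\nabla$ commutes with $e^{it\Delta}$, it suffices to prove two classical estimates and apply them with $f=\nabla u(t_0)$ and $F=\nabla G$: the homogeneous bound $\|e^{it\Delta}f\|_{L^q_tL^r_x(\R\times\R^4)}\lesssim\|f\|_{L^2_x}$ and the retarded inhomogeneous bound $\bigl\|\int_{t_0}^{t} e^{i(t-s)\Delta}F(s)\,ds\bigr\|_{L^q_tL^r_x(\R\times\R^4)}\lesssim\|F\|_{L^{\tilde q'}_tL^{\tilde r'}_x(\R\times\R^4)}$, for all admissible pairs $(q,r)$, $(\tilde q,\tilde r)$, i.e. those with $\tfrac1q+\tfrac2r=\tfrac1{\tilde q}+\tfrac2{\tilde r}=1$ and $q,\tilde q\geq 2$. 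Since $I$ is an arbitrary compact interval we may as well work on all of $\R$.

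First I would establish the non-endpoint homogeneous estimate by the $TT^*$ method. The operator $TT^*F=\int_\R e^{i(t-s)\Delta}F(s)\,ds$ obeys, by the dispersive inequality \eqref{dispersive ineq} and the unitarity of $e^{it\Delta}$ on $L^2_x$, the fixed-time bounds $\|e^{i(t-s)\Delta}\|_{L^1_x\to L^\infty_x}\lesssim|t-s|^{-2}$ and $\|e^{i(t-s)\Delta}\|_{L^2_x\to L^2_x}=1$; interpolating gives $\|e^{i(t-s)\Delta}\|_{L^{r'}_x\to L^r_x}\lesssim|t-s|^{-2(1-2/r)}$, and since $2(1-2/r)=2/q<1$ for $q>2$, the Hardy--Littlewood--Sobolev inequality in the time variable yields $\|TT^*F\|_{L^q_tL^r_x}\lesssim\|F\|_{L^{q'}_tL^{r'}_x}$, which is equivalent to the homogeneous estimate for $(q,r)$ with $q>2$. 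The off-diagonal untruncated inhomogeneous estimate then follows by factoring the operator as $T\circ\tilde T^*$ and using the homogeneous (and dual homogeneous) bounds for both pairs. The endpoint $q=2$, $r=4$ is the delicate case and is the main obstacle: it is not reachable by Hardy--Littlewood--Sobolev because the required weak-type bound fails at the endpoint, and instead requires the Keel--Tao argument---a dyadic decomposition of the time separation $|t-s|\sim 2^j$ combined with a bilinear real-interpolation summation over $j$. I would cite \cite{tao:keel} for this step rather than reproduce it.

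To pass from the untruncated inhomogeneous estimates to the retarded version with $\int_{t_0}^{t}$ in place of $\int_\R$, I would invoke the Christ--Kiselev lemma: whenever $\tilde q'<q$ one may replace the full integral by the retarded integral at the cost of a harmless constant. This covers all admissible pairs except the double endpoint $q=\tilde q=2$, where $\tilde q'=2=q$ and Christ--Kiselev does not apply; that case again requires a direct Keel--Tao-type argument, which I would also cite. Finally, combining the homogeneous estimate applied to $e^{i(t-t_0)\Delta}\nabla u(t_0)$ with the retarded inhomogeneous estimate applied to $-i\int_{t_0}^{t}e^{i(t-s)\Delta}\nabla G(s)\,ds$, and using $\|\nabla u(t_0)\|_{L^2_x}=\|u(t_0)\|_{\dot H^1_x}$, gives the claimed bound. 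The only genuinely hard ingredient is the $q=2$ endpoint; everything else is interpolation, Hardy--Littlewood--Sobolev, and the Christ--Kiselev truncation trick.
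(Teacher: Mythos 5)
Your argument is correct and follows the standard route; the paper does not prove Lemma~\ref{L:Strichartz} at all, but simply cites \cite{gv:strichartz, tao:keel, Strichartz} --- the same sources you defer to for the hard steps. Your accounting of the delicate points is accurate: the dispersive bound \eqref{dispersive ineq} plus $TT^*$ and Hardy--Littlewood--Sobolev give all pairs with $q>2$, the endpoint $(q,r)=(2,4)$ and the double-endpoint retarded estimate need the Keel--Tao argument, and Christ--Kiselev handles the remaining retarded cases precisely because $\tilde q' < q$ away from $q=\tilde q=2$.
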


We also recall the bilinear Strichartz estimates.  These were introduced by Bourgain \cite{borg bil}; see also \cite{CKSTT:gwp, RV, Monica:thesis} for several extensions.  For the particular version we need,
see Corollary~4.19 in \cite{ClayNotes}.

\begin{lemma}[Bilinear Strichartz]\label{L:bilinear Strichartz}
For any spacetime slab $I \times \R^4$ and any frequencies $M>0$ and $N>0$,
\begin{align*}
\bigl\|u_{\leq M}v_{\geq N}\bigr\|_{L^2_{t,x} (I \times \R^4)} 
&\lesssim \frac{M^{1/2}}{N^{1/2}} \|\nabla u_{\leq M}\|_{S_0^*(I)}\| v_{\geq N}\|_{S_0^*(I)},
\end{align*}
where we use the notation
$$
\|u\|_{S^*_0(I)}:=\|u\|_{L_t^\infty L_x^2(I\times\R^4)} + \|(i \partial_t + \Delta)u\|_{L_{t,x}^{3/2}(I \times \R^4)}.
$$
\end{lemma}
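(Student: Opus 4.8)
\emph{Proof proposal.}
The essential case is the analogous estimate for \emph{free} evolutions, from which the stated inequality will follow by Duhamel's formula together with the Christ--Kiselev lemma. So the first goal is to prove that, for any $\phi,\psi\in L^2_x(\R^4)$ and any dyadic $M,N$ with $N\geq 2M$,
\[
\bigl\|(e^{it\Delta}P_{\leq M}\phi)\,(e^{it\Delta}P_{\geq N}\psi)\bigr\|_{L^2_{t,x}(\R\times\R^4)}\lesssim \tfrac{M^{1/2}}{N^{1/2}}\,\|\nabla P_{\leq M}\phi\|_{L^2_x}\,\|P_{\geq N}\psi\|_{L^2_x}.
\]
We may restrict to $N\geq 2M$ since otherwise the claimed constant is $\gtrsim 1$ and the bound follows from the single-frequency-piece estimates below via a Schur test. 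Granting the displayed estimate, one writes $u_{\leq M}(t)=e^{i(t-t_0)\Delta}u_{\leq M}(t_0)-i\int_{t_0}^t e^{i(t-s)\Delta}(i\partial_s+\Delta)u_{\leq M}(s)\,ds$ for a fixed $t_0\in I$, and similarly for $v_{\geq N}$, and expands $u_{\leq M}v_{\geq N}$ into four bilinear terms.

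\emph{The free-solution estimate.} Decompose dyadically, $P_{\leq M}\phi=\sum_{\mu\leq M}P_\mu\phi$ and $P_{\geq N}\psi=\sum_{\nu\geq N}P_\nu\psi$, and first establish the single-piece bound (for $\mu\leq M<N\leq\nu$)
\[
\bigl\|(e^{it\Delta}P_\mu\phi)(e^{it\Delta}P_\nu\psi)\bigr\|_{L^2_{t,x}}\lesssim \tfrac{\mu^{3/2}}{\nu^{1/2}}\,\|P_\mu\phi\|_{L^2_x}\,\|P_\nu\psi\|_{L^2_x}\sim \tfrac{\mu^{1/2}}{\nu^{1/2}}\,\|\nabla P_\mu\phi\|_{L^2_x}\,\|P_\nu\psi\|_{L^2_x},
\]
which is Bourgain's bilinear estimate. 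The spacetime Fourier transform turns the product into a convolution of the two measures carried by the paraboloid, so by Plancherel $\|(e^{it\Delta}P_\mu\phi)(e^{it\Delta}P_\nu\psi)\|_{L^2_{t,x}}^2$ is the integral over $(\tau,\xi)$ of $\bigl|\int\widehat{P_\mu\phi}(\xi_1)\,\widehat{P_\nu\psi}(\xi-\xi_1)\,\delta\bigl(\tau+|\xi_1|^2+|\xi-\xi_1|^2\bigr)\,d\xi_1\bigr|^2$. The constraint surface in $\xi_1$ is a sphere about $\xi/2$ of radius $\tfrac12|2\xi_1-\xi|$, which on the relevant support has size $\sim\nu$; the gradient $2(2\xi_1-\xi)$ of the constraint has modulus $\sim\nu$ there, while the portion of that sphere lying in $\{|\xi_1|\lesssim\mu\}$ is an almost-flat cap of surface measure $\lesssim\mu^{3}$. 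Applying the coarea formula and Cauchy--Schwarz on the constraint surface therefore produces the factor $\mu^{3}/\nu$, and undoing the coarea formula restores $\|P_\mu\phi\|_{L^2}^2\|P_\nu\psi\|_{L^2}^2$. Summing over dyadic $\mu\leq M$, $\nu\geq N$ and using Cauchy--Schwarz in the dyadic indices — here the geometric sums $\sum_{\mu\leq M}\mu\sim M$ and $\sum_{\nu\geq N}\nu^{-1}\sim N^{-1}$ are what produce $M^{1/2}N^{-1/2}$, while $\sum_\mu\|\nabla P_\mu\phi\|_{L^2}^2\sim\|\nabla P_{\leq M}\phi\|_{L^2}^2$ (and similarly for $\psi$) by orthogonality — yields the free-solution estimate. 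It is crucial to retain the $\mu^{1/2}$ gain rather than bounding it by $M^{1/2}$, since $P_{\leq M}\phi$ may carry only low-frequency content.

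\emph{Passage to general solutions.} The data$\times$data term is exactly the free-solution estimate, with $\|\nabla P_{\leq M}u(t_0)\|_{L^2_x}\leq\|\nabla u_{\leq M}\|_{S^*_0(I)}$ and $\|P_{\geq N}v(t_0)\|_{L^2_x}\leq\|v_{\geq N}\|_{S^*_0(I)}$. For a term containing a retarded Duhamel integral, say $w_v(t):=\int_{t_0}^t e^{i(t-s)\Delta}(i\partial_s+\Delta)v_{\geq N}(s)\,ds$, the Christ--Kiselev lemma (applicable because the relevant time-exponents satisfy $\tfrac32<2$) reduces matters to the corresponding non-retarded integral $\int_I e^{i(t-s)\Delta}(i\partial_s+\Delta)v_{\geq N}(s)\,ds=e^{it\Delta}\Psi$ with $\Psi:=\int_I e^{-is\Delta}(i\partial_s+\Delta)v_{\geq N}(s)\,ds$; this is again a free evolution, the free-solution estimate applies, and the dual Strichartz inequality (cf.\ Lemma~\ref{L:Strichartz}) gives $\|\Psi\|_{L^2_x}\lesssim\|(i\partial_s+\Delta)v_{\geq N}\|_{L^{3/2}_{t,x}}\leq\|v_{\geq N}\|_{S^*_0(I)}$. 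The mixed term with $u$ replaced by its Duhamel part is symmetric, and the Duhamel$\times$Duhamel term is handled the same way, applying Christ--Kiselev successively in each of the two time variables. Summing the four contributions gives the lemma.

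\emph{Main obstacle.} The substantive ingredient is the transversality input in the free-solution estimate: two pieces of the paraboloid whose frequencies differ by a factor $\gtrsim N/M$ meet at an angle bounded below, quantified by the surface-measure bound for the constraint sphere inside $\{|\xi_1|\lesssim M\}$; everything else is orthogonality and bookkeeping. The subtlest bookkeeping is the reduction to free solutions: a naive use of Minkowski's inequality on the Duhamel integral would require the forcing term in $L^1_tL^2_x$, which is far too strong, and it is precisely the Christ--Kiselev lemma — exploiting the strict inequality $\tfrac32<2$ — that lets one use only the $L^{3/2}_{t,x}$-norm built into $S^*_0(I)$.
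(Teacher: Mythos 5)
The paper does not prove this lemma itself but cites Corollary~4.19 of \cite{ClayNotes} (going back to \cite{borg bil, CKSTT:gwp, RV}), and your argument is essentially the standard proof given in those sources: the free-evolution bilinear estimate via Plancherel and transversality of the two paraboloid pieces (with the dyadic summation retained so that the low factor carries $\|\nabla P_{\leq M}\phi\|_{L^2}$ rather than $M\|P_{\leq M}\phi\|_{L^2}$), followed by the Duhamel decomposition and the Christ--Kiselev lemma, which is exactly what allows the forcing to be measured in $L^{3/2}_{t,x}$ rather than $L^1_tL^2_x$. Your proposal is correct, including the (tersely handled but harmless) reduction to the separated case $N\gtrsim M$ and the successive application of Christ--Kiselev to the Duhamel$\times$Duhamel term.
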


Our next result is a paraproduct estimate; cf. the proof of Proposition~5.7 in \cite{RV}.  

\begin{lemma}[Paraproduct estimate]\label{L:neg deriv}
We have
\begin{align*}
\bigl\||\nabla|^{-2/3}(fg)\bigr\|_{L_x^{4/3}}   \lesssim \bigl\||\nabla|^{-2/3} f\bigr\|_{L_{x}^{p}}\bigl\||\nabla|^{2/3} g\bigr\|_{L_x^{q}},
\end{align*}
for any $1<p,q<\infty$ such that $\frac1p+\frac1q=\frac{11}{12}$.
\end{lemma}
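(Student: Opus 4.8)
The plan is to eliminate the negative‑order derivative by duality and then invoke a fractional Leibniz estimate. Since $(4/3)'=4$ and $|\nabla|^{-2/3}$ is self‑adjoint for the bilinear pairing $\langle\cdot,\cdot\rangle$, for any Schwartz $\phi$ we may relocate the fractional integral and use H\"older's inequality to obtain $|\langle fg,\phi\rangle|=|\langle f,g\phi\rangle|=\bigl|\langle|\nabla|^{-2/3}f,\,|\nabla|^{2/3}(g\phi)\rangle\bigr|\le\bigl\||\nabla|^{-2/3}f\bigr\|_{L_x^p}\bigl\||\nabla|^{2/3}(g\phi)\bigr\|_{L_x^{p'}}$. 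Taking the supremum over $\phi$ with $\||\nabla|^{2/3}\phi\|_{L_x^4}\le1$ and using the duality of $L_x^{4/3}$ and $L_x^4$,
\begin{align*}
\bigl\||\nabla|^{-2/3}(fg)\bigr\|_{L_x^{4/3}}\le\bigl\||\nabla|^{-2/3}f\bigr\|_{L_x^p}\ \sup_{\|\,|\nabla|^{2/3}\phi\|_{L_x^4}\le1}\bigl\||\nabla|^{2/3}(g\phi)\bigr\|_{L_x^{p'}}.
\end{align*}
It therefore suffices to prove the Leibniz-type bound $\||\nabla|^{2/3}(g\phi)\|_{L_x^{p'}}\lesssim\||\nabla|^{2/3}g\|_{L_x^q}\,\||\nabla|^{2/3}\phi\|_{L_x^4}$ with $\tfrac1{p'}=1-\tfrac1p$; note the hypothesis $\tfrac1p+\tfrac1q=\tfrac{11}{12}$ is exactly $\tfrac1{p'}=\tfrac1q+\tfrac1{12}$.

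For this I would apply the fractional product rule (a standard Kato--Ponce inequality; see, e.g., \cite{ClayNotes}), distributing the $\tfrac23$ derivatives as $(\tfrac23,0)$ on $(g,\phi)$ in one term and as $(0,\tfrac23)$ in the other:
\begin{align*}
\bigl\||\nabla|^{2/3}(g\phi)\bigr\|_{L_x^{p'}}\lesssim\bigl\||\nabla|^{2/3}g\bigr\|_{L_x^q}\,\|\phi\|_{L_x^{12}}+\|g\|_{L_x^{r}}\,\bigl\||\nabla|^{2/3}\phi\bigr\|_{L_x^4},\qquad \tfrac1r:=\tfrac1q-\tfrac16 .
\end{align*}
The exponents here are forced: one needs $\tfrac1{12}=\tfrac1{p'}-\tfrac1q$ and $\tfrac1r=\tfrac1{p'}-\tfrac14=\tfrac1q-\tfrac16$, both of which hold. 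The two factors that are not yet in the required form are then absorbed by Sobolev embedding on $\R^4$: $\|\phi\|_{L_x^{12}}\lesssim\||\nabla|^{2/3}\phi\|_{L_x^4}$ (since $\tfrac1{12}=\tfrac14-\tfrac{2/3}{4}$), and $\|g\|_{L_x^{r}}\lesssim\||\nabla|^{2/3}g\|_{L_x^q}$ (since $\tfrac1r=\tfrac1q-\tfrac{2/3}{4}$). Combining these with the two displays above proves the lemma.

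The one point requiring care is that the Sobolev inequality $\|g\|_{L_x^{r}}\lesssim\||\nabla|^{2/3}g\|_{L_x^q}$ holds only for $q<6$ (equivalently $\tfrac1r=\tfrac1q-\tfrac16>0$); I expect this case to be the only genuine wrinkle. For $6\le q<\infty$ one runs the same scheme, but distributes the $\tfrac23$ derivatives in the second term of the product rule as $(s,\tfrac23-s)$ with $\tfrac23-\tfrac4q<s<\tfrac23$, so that the ensuing Sobolev inequalities $\||\nabla|^{s}g\|_{L_x^{a}}\lesssim\||\nabla|^{2/3}g\|_{L_x^q}$ and $\||\nabla|^{2/3-s}\phi\|_{L_x^{b}}\lesssim\||\nabla|^{2/3}\phi\|_{L_x^4}$ remain subcritical; a short computation shows the resulting H\"older exponents satisfy $\tfrac1a+\tfrac1b=\tfrac1{p'}$ for every such $s$. (Alternatively, one can bypass duality and estimate $fg$ directly via a Littlewood--Paley paraproduct decomposition into high--low, low--high and high--high interactions, using Bernstein's inequality together with the $L_x^{4/3}$ square-function estimate and the Fefferman--Stein vector-valued maximal inequality; in that route the delicate term is the resonant piece $\sum_{N\sim M}(P_N f)(P_M g)$, whose treatment hinges on the inequality $\min(\tfrac1p,\tfrac12)+\min(\tfrac1q,\tfrac12)\ge\tfrac34$ --- valid precisely because $\tfrac1p+\tfrac1q=\tfrac{11}{12}<1$ keeps $p$ and $q$ from both being at most $2$ --- and that is where the real work would concentrate.)
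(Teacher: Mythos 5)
Your main argument is correct in the range where it applies, and it is a genuinely different route from the paper's. The paper decomposes $fg$ into low--high and high--low paraproducts, treats each piece with a Coifman--Meyer multiplier theorem, and finishes with Sobolev embedding; you instead dualize, writing $\||\nabla|^{-2/3}(fg)\|_{L^{4/3}}$ as a supremum of $|\langle fg,\phi\rangle|$ over $\||\nabla|^{2/3}\phi\|_{L^4}\leq 1$, and reduce the lemma to the Leibniz bound $\||\nabla|^{2/3}(g\phi)\|_{L^{p'}}\lesssim \||\nabla|^{2/3}g\|_{L^q}\||\nabla|^{2/3}\phi\|_{L^4}$, which follows from the paper's own Lemma~\ref{L:product rule} together with two Sobolev embeddings. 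The exponent bookkeeping is right, and this buys a proof that uses only tools already quoted in the paper and avoids bilinear multiplier theory altogether. As you observe, it requires $\frac1q-\frac16>0$, i.e.\ $q<6$; note that the paper's argument carries the same implicit restriction (its high--low term ends with the embedding $\||\nabla|^{-2/3}g\|_{L^r}\lesssim\|g\|_{L^q}$ with $\frac1r=\frac1q-\frac16$), and that $q<6$ covers the applications with $(p,q)=(4,\frac32)$ in \eqref{1} and in Section~\ref{S:no cascade}.

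The patch you propose for $q\geq 6$ is a genuine gap. The two-term product rule you invoke there, with derivative splittings $(\frac23,0)$ and $(s,\frac23-s)$, $s>0$, is false: take $g$ a fixed Schwartz function with frequencies $O(1)$ and $\phi=e^{iMx_1}\psi$ a modulated bump; then $g\phi$ lives at frequency $\sim M$, so the left-hand side grows like $M^{2/3}$ while the right-hand side is $O(1)+O(M^{2/3-s})$. The low--high interaction forces one term to carry the full $\frac23$ derivatives on $\phi$, that is, a term $\|g\|_{L^r}\||\nabla|^{2/3}\phi\|_{L^4}$, and Sobolev on $g$ then demands $q<6$; no choice of $s$ circumvents this. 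Moreover, the restriction is not an artifact of either method: for $q>6$ (equivalently $p<\frac43$) the stated inequality is itself false. Indeed, with $f(x)=e^{iNx_1}\chi(x/\rho)$, $\rho=N^{-1/2}$, and $g$ a fixed low-frequency function bounded below on the unit ball, both $f$ and $fg$ are frequency-localized at $|\xi|\sim N$, so the left-hand side is $\gtrsim N^{-2/3}\rho^{3}$ while the right-hand side is $\lesssim N^{-2/3}\rho^{4/p}$, and $\rho^{3-4/p}\to\infty$ as $\rho\to0$ when $p<\frac43$. Thus the only pair your main argument leaves open is the endpoint $(p,q)=(\frac43,6)$ --- the pair used in \eqref{2} --- which, read literally, the paper's proof does not reach either; handling it (or perturbing the exponents in that application) requires an additional argument, not the split-derivative Leibniz rule. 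The alternative paraproduct sketch in your parenthesis would meet the same obstruction in its high--low piece rather than in the resonant piece.
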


\begin{proof}
The claim is equivalent to the following estimate
\begin{align*}
\bigl\||\nabla|^{-\frac{2}{3}}\{(|\nabla|^{\frac{2}{3}} f)(|\nabla|^{-\frac{2}{3}}g)\}\bigr\|_{L_x^{4/3}}   \lesssim \|f\|_{L_{x}^{p}}\|g\|_{L_x^{q}}, \quad \text{for } 1<p,q<\infty \text{ obeying } \tfrac1p+\tfrac1q=\tfrac{11}{12}.
\end{align*}
To prove this, we start by decomposing the left-hand side into $\pi_{l,h}$ and $\pi_{h,l}$, which represent the projections onto low-high and high-low frequency interactions.
More precisely, for any pair of functions $(\phi, \psi)$, we write
$$
\pi_{l,h}(\phi, \psi):=\sum_{N\lesssim M}\phi_N \psi_M \quad \text{and} \quad \pi_{h,l}(\phi, \psi):=\sum_{N\gg M}\phi_N \psi_M.
$$

Let us consider first the low-high interactions.  By Sobolev embedding,
\begin{align*}
\bigl\||\nabla|^{-\frac{2}{3}}\pi_{l,h}(|\nabla|^{\frac{2}{3}} f, |\nabla|^{-\frac{2}{3}}g)\bigr\|_{L_x^{4/3}}  \lesssim \bigl\|\pi_{l,h}(|\nabla|^{\frac{2}{3}}f,|\nabla|^{-\frac{2}{3}} g)\bigr\|_{L_x^{12/11}}. 
\end{align*}
Now we only have to observe that the multiplier associated to the operator
$$
T(f,g)=\pi_{l,h}(|\nabla|^{\frac{2}{3}} f, |\nabla|^{-\frac{2}{3}}g),   \quad \text{that is,}\quad   \sum_{N\lesssim M} |\xi_1|^{\frac{2}{3}}\widehat{P_N f}(\xi_1) |\xi_2|^{-\frac{2}{3}}\widehat{P_M g}(\xi_2),
$$
is a symbol of order zero with $\xi = (\xi_1,\xi_2)$, since then a theorem of Coifman and Meyer (see, for example, \cite{coifmey:1,coifmey:2}) will conclude our claim.

To deal with the $\pi_{h,l}$ term, we first notice that the multiplier associated to the operator $T(f,h)=|\nabla|^{-\frac{2}{3}} \pi_{h,l}(|\nabla|^{\frac{2}{3}} f, h)$, that is,
$$
\sum_{N\gg M}|\xi_1+\xi_2|^{-\frac{2}{3}}|\xi_1|^{\frac{2}{3}}\widehat{P_N f}(\xi_1) \widehat{P_M h}(\xi_2),
$$
is a symbol of order zero. The result cited above then yields
$$
\bigl\||\nabla|^{-\frac{2}{3}}\pi_{h,l}(|\nabla|^{\frac{2}{3}} f, |\nabla|^{-\frac{2}{3}}g)\bigr\|_{L_x^{4/3}}  \lesssim \|f\|_{L^p_x} \||\nabla|^{-\frac{2}{3}}g\|_{L_x^r},
$$
where $r$ is such that $\frac1p+\frac1r=\frac34$.  The claim now follows by applying Sobolev embedding to the second factor on the right-hand side of the inequality above.
\end{proof}

Whenever we will employ the paraproduct estimate above, we will also use the product rule for fractional derivatives:

\begin{lemma}[Product rule, \cite{ChristWeinstein}]\label{L:product rule}
Let $s\in(0,1]$ and $1<p, p_1,p_2,q_1,q_2<\infty$ such that $\frac 1p =\frac 1{p_i}+\frac 1{q_i}$ for $i=1,2$.  Then,
$$
\bigl\||\nabla|^s(fg)\bigr\|_{L^p} \lesssim \bigl\|f\bigr\|_{L^{p_1}}\bigl\||\nabla|^s g\bigr\|_{L^{q_1}} + \bigl\||\nabla|^s f\bigr\|_{L^{p_2}}\bigl\|g\bigr\|_{L^{q_2}}.
$$
\end{lemma}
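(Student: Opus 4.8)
The plan is to run a Littlewood--Paley paraproduct decomposition and reduce matters to the Fefferman--Stein vector-valued maximal inequality together with the square-function characterization $\bigl\||\nabla|^\sigma h\bigr\|_{L^r}\sim\bigl\|(\sum_N N^{2\sigma}|h_N|^2)^{1/2}\bigr\|_{L^r}$ (valid for $1<r<\infty$, $\sigma\ge0$); it suffices to prove the estimate for Schwartz $f,g$ and then extend by density. Decompose $fg=\pi_{lh}+\pi_{hl}+\pi_{hh}$ into low--high, high--low, and resonant (high--high) interactions:
\[
\pi_{lh}:=\sum_N f_{\le N/8}\,g_N,\quad \pi_{hl}:=\sum_N f_N\,g_{\le N/8},\quad \pi_{hh}:=\sum_N f_N\,G_N,\quad G_N:=\textstyle\sum_{M\sim N}g_M.
\]
I will bound $\||\nabla|^s\pi_{lh}\|_{L^p}$ and $\||\nabla|^s\pi_{hh}\|_{L^p}$ by the first term on the right-hand side and $\||\nabla|^s\pi_{hl}\|_{L^p}$ by the second.

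For the low--high piece, each summand $f_{\le N/8}g_N$ has Fourier support in an annulus $|\xi|\sim N$, so $|\nabla|^s(f_{\le N/8}g_N)=|\nabla|^s\widetilde P_N(f_{\le N/8}g_N)$ for a suitably fattened Littlewood--Paley projection $\widetilde P_N$; the convolution kernel of $|\nabla|^s\widetilde P_N$ is $N^s$ times an $L^1$-normalized Schwartz approximate identity, which gives the pointwise bound $\bigl||\nabla|^s(f_{\le N/8}g_N)\bigr|\lesssim N^s\mathcal M(f_{\le N/8}g_N)$ with $\mathcal M$ the Hardy--Littlewood maximal operator. Using in turn the Littlewood--Paley inequality for the almost-orthogonal family $\{|\nabla|^s(f_{\le N/8}g_N)\}_N$, the Fefferman--Stein inequality, the pointwise bound $|f_{\le N/8}|\lesssim\mathcal M f$, and finally H\"older with $\tfrac1p=\tfrac1{p_1}+\tfrac1{q_1}$ together with the square-function characterization of $\||\nabla|^s g\|_{L^{q_1}}$, one obtains
\[
\bigl\||\nabla|^s\pi_{lh}\bigr\|_{L^p}\lesssim\Bigl\|\,\mathcal M f\cdot\bigl(\textstyle\sum_N N^{2s}|g_N|^2\bigr)^{1/2}\Bigr\|_{L^p}\lesssim\|f\|_{L^{p_1}}\bigl\||\nabla|^s g\bigr\|_{L^{q_1}}.
\]
The high--low piece $\pi_{hl}$ is handled identically with the roles of $f$ and $g$, and of the exponent pairs $(p_1,q_1)$ and $(q_2,p_2)$, interchanged.

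The resonant term $\pi_{hh}$ is the main obstacle, since $f_N G_N$ has Fourier support only in the ball $|\xi|\lesssim N$ rather than in an annulus, so the square-function step above is unavailable. The remedy is to reorganize by the output frequency, writing $|\nabla|^s\pi_{hh}=\sum_L |\nabla|^s P_L\bigl(\sum_{N\gtrsim L}f_N G_N\bigr)$; now the summands are frequency-localized to $|\xi|\sim L$, so the Littlewood--Paley inequality applies, and combining it with $\bigl||\nabla|^s P_L h\bigr|\lesssim L^s\mathcal M h$ and Fefferman--Stein reduces everything to the pointwise estimate
\[
\sum_L L^{2s}\Bigl|\sum_{N\gtrsim L}f_N G_N\Bigr|^2\lesssim\Bigl(\sum_N |f_N|^2\Bigr)\Bigl(\sum_N N^{2s}|G_N|^2\Bigr).
\]
This is the step that uses $s>0$: one applies Cauchy--Schwarz in $N$ with the weights $(L/N)^{\pm\varepsilon}$ for a fixed $\varepsilon\in(0,2s)$; the $f$-factor is then $\lesssim\sum_N|f_N|^2$ because $L\lesssim N$ throughout the range of summation, while the $g$-factor, after the geometric summation $\sum_{L\lesssim N}L^{2s-\varepsilon}\sim N^{2s-\varepsilon}$, is $\lesssim\sum_N N^{2s}|G_N|^2$. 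Feeding this back in and using H\"older and the square-function characterizations (noting $(\sum_N N^{2s}|G_N|^2)^{1/2}\lesssim$ the square function of $|\nabla|^s g$) yields $\||\nabla|^s\pi_{hh}\|_{L^p}\lesssim\|f\|_{L^{p_1}}\||\nabla|^s g\|_{L^{q_1}}$. Adding the three contributions proves the lemma. The hypotheses $1<p,p_1,p_2,q_1,q_2<\infty$ are exactly what the maximal and Littlewood--Paley inequalities require; the upper bound $s\le1$ plays no role in this particular argument.
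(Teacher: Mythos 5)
Your proof is correct. Note, however, that the paper does not prove this lemma at all: it is quoted directly from Christ--Weinstein \cite{ChristWeinstein}, so there is no internal argument to compare against. What you have written is the standard self-contained paraproduct proof of the fractional Leibniz rule: low--high and high--low pieces handled by frequency localization of the output, the pointwise bound $\bigl||\nabla|^s\widetilde P_N h\bigr|\lesssim N^s\mathcal M h$, Fefferman--Stein, and the square-function characterization of $\||\nabla|^s\cdot\|_{L^r}$; the high--high piece reorganized by output frequency, with the weighted Cauchy--Schwarz in $N$ (weights $(L/N)^{\pm\eps}$, $\eps\in(0,2s)$) being exactly where $s>0$ is used. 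All the ingredients you invoke (vector-valued maximal inequality, Littlewood--Paley theorem, bounded overlap of the output annuli, vanishing of $P_L(f_NG_N)$ for $L\gg N$) are applied within their ranges of validity, and your closing remark is accurate: the argument needs none of the upper bound $s\leq 1$, so it actually proves a slightly more general statement than the one quoted. One stylistic alternative worth knowing: since the paper already proves its paraproduct estimate (Lemma~\ref{L:neg deriv}) by reducing to the Coifman--Meyer multiplier theorem, the low--high and high--low pieces of the present lemma could likewise be dispatched by observing that the corresponding bilinear symbols, after pulling the derivative onto the high-frequency factor, are of order zero; your direct square-function treatment avoids citing Coifman--Meyer but is otherwise of comparable length, and the high--high term requires essentially the same extra work either way.
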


%%%%%%%%%%%%%%%%%%%%%%%%%%%%%%%%%%%%%%%%%%%%%%%%%%%%%%%%%%%%%%%%%%%%%%%%%%%%%%%%%%%%%%%%%%%
%
%
%                                   Section
%
%
%%%%%%%%%%%%%%%%%%%%%%%%%%%%%%%%%%%%%%%%%%%%%%%%%%%%%%%%%%%%%%%%%%%%%%%%%%%%%%%%%%%%%%%%%%%

\section{Long-time Strichartz estimates}\label{S:LTS}

The main result of this section is a long-time Strichartz estimate.  This is inspired by an analogous statement for the mass-critical NLS obtained by Dodson \cite{Dodson:d>2}.

\begin{theorem}[Long-time Strichartz estimates]\label{T:LTS}
Let $u:[0, \Tmax)\times\R^4\to \C$ be an almost periodic solution to \eqref{nls} with $N(t)\equiv N_k \geq 1$ on each characteristic interval $J_k\subset [0, \Tmax)$.  Then, on any compact time interval
$I\subset [0, \Tmax)$, which is a union of contiguous intervals $J_k$, and for any frequency $N>0$,
\begin{align}\label{E:finite LTS}
\|\nabla u_{\leq N}\|_{L_t^2 L_x^4(I\times\R^4)} \lesssim_u 1 + N^{3/2} K^{1/2},
\end{align}
where $K:=\int_I N(t)^{-1}\, dt$.  Moreover, for any $\eta>0$ there exists $N_0=N_0(\eta)>0$ such that for all $N\leq N_0$,
\begin{align}\label{E:small LTS}
\|\nabla u_{\leq N}\|_{L_t^2 L_x^4(I\times\R^4)} \lesssim_u \eta\bigl(1 + N^{3/2} K^{1/2}\bigr).
\end{align}
Importantly, the constant $N_0$ and the implicit constants in \eqref{E:finite LTS} and \eqref{E:small LTS} are independent of the interval $I$.
\end{theorem}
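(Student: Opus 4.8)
The plan is to establish \eqref{E:finite LTS} by induction on the dyadic frequency $N$, with the recurrence coming from the Duhamel formula combined with the Strichartz inequality (Lemma~\ref{L:Strichartz}), the bilinear Strichartz inequality (Lemma~\ref{L:bilinear Strichartz}), and the paraproduct estimate (Lemma~\ref{L:neg deriv}); the refined bound \eqref{E:small LTS} will then follow by rerunning the same argument, now exploiting the smallness at low frequencies recorded in Remark~\ref{R:small freq}. We may restrict to dyadic $N$. Since $I\subset[0,\Tmax)$ is compact and $u$ is a solution, $\|u\|_{L^6_{t,x}(I)}<\infty$, so all Strichartz norms of $\nabla u$ on $I$ are finite; in particular $\|\nabla u_{\le N}\|_{L^2_tL^4_x(I)}<\infty$ for every $N$, with a bound that for the moment depends on $I$. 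This qualitative finiteness is only used to launch the bootstrap and does not enter the final constants. Fixing $t_0\in I$ and applying $\nabla P_{\le N}$ to \eqref{old duhamel}, the Strichartz inequality with the endpoint pair $(q,r)=(2,4)$ gives
\begin{equation*}
\|\nabla u_{\le N}\|_{L^2_tL^4_x(I)} \lesssim \|\nabla u_{\le N}(t_0)\|_{L^2_x} + \bigl\|\nabla P_{\le N}F(u)\bigr\|_{\mathcal N(I)},
\end{equation*}
where $\mathcal N(I)$ is a suitable dual-Strichartz norm, in practice a combination of $L^2_tL^{4/3}_x(I)$ and $L^{3/2}_{t,x}(I)$, the latter being the norm paired with $\|\cdot\|_{S^*_0}$ in Lemma~\ref{L:bilinear Strichartz}. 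The first term is $\lesssim_u 1$ by conservation of energy, which accounts for the constant $1$ in \eqref{E:finite LTS}.

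For the nonlinear term we write $F(u)=|u|^2u$, Littlewood--Paley decompose each factor, and split into the self-interaction $\nabla P_{\le N}F(u_{\le N})$ and the terms in which at least one input --- hence, by frequency balance, at least two mutually comparable inputs --- have frequency $\gtrsim N$. The self-interaction is bounded, after distributing the derivative and using the Sobolev embedding $\dot H^1_x\hookrightarrow L^4_x$, by a constant multiple of $\|u_{\le N}\|_{L^\infty_tL^4_x}^2\,\|\nabla u_{\le N}\|_{L^2_tL^4_x(I)}$, i.e.\ by a multiple of the left-hand side. When $N\le N_0(u)$, Remark~\ref{R:small freq} makes the coefficient $\|u_{\le N}\|_{L^\infty_tL^4_x}^2$ as small as we please, so this term is simply absorbed; when $N$ is large (larger than all the frequencies $N_k$ actually present) one instead invokes Lemma~\ref{L:ST-N(t)} directly, which already yields $\|\nabla u_{\le N}\|_{L^2_tL^4_x(I)}\lesssim_u(1+\int_I N(t)^2\,dt)^{1/2}\lesssim_u 1+N^{3/2}K^{1/2}$ in that range.

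The heart of the matter is the high-frequency interaction terms, which supply both the $N^{3/2}K^{1/2}$ driving term and the recursive terms. We pair the lowest-frequency factor of the cubic expression --- an $u_{\le N}$, or an $u_{\le M}$ with $M\le N$ after a further decomposition --- against a high-frequency factor and apply the bilinear Strichartz inequality (Lemma~\ref{L:bilinear Strichartz}), which on a characteristic interval $J_k$ yields, for interaction with the bulk frequency $u_{\sim N_k}$, a gain $(N/N_k)^{1/2}$; the remaining factor goes into an energy-controlled norm by Sobolev embedding, and the paraproduct estimate (Lemma~\ref{L:neg deriv}) together with the fractional product rule (Lemma~\ref{L:product rule}) is used to move the derivative $\nabla P_{\le N}$ onto a factor where it is harmless (this is what forces the exponents $|\nabla|^{\pm 2/3}$, $L^{4/3}_x$, $L^{3/2}_{t,x}$). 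Working interval by interval: on each $J_k$, local constancy (Lemma~\ref{L:local const}) together with Lemma~\ref{L:ST-N(t)} and $\int_{J_k}N(t)^2\,dt\sim_u 1$ shows that \emph{all} Strichartz norms of $u$ on $J_k$ are $\lesssim_u 1$; hence $\|\nabla u_{\le N}\|_{S^*_0(J_k)}\lesssim_u 1$ and $\|u_{\sim N_k}\|_{S^*_0(J_k)}\lesssim_u N_k^{-1}$, so the bilinear estimate produces a nonlinear contribution of size $N\cdot(N/N_k)^{1/2}N_k^{-1}=N^{3/2}N_k^{-3/2}$ on $J_k$. Squaring and summing over $k$ --- the $L^2_t$ structure making this an $\ell^2$ sum --- and using $K=\int_I N(t)^{-1}\,dt\sim_u\sum_k N_k^{-3}$ produces exactly $N^{3/2}K^{1/2}$. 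The resonant pieces, in which all the $\gtrsim N$ inputs are comparable, carry no bilinear gain and are disposed of interval by interval by the same $I$-independent per-$J_k$ bounds, while the $S^*_0$- and $L^{3/2}_{t,x}$-factors left over along the way are reduced, by one more use of the Strichartz inequality, to quantities $\|\nabla u_{\le M}\|_{L^2_tL^4_x(I)}$ with $M\le N$. Altogether one obtains a recurrence of the schematic form
\begin{equation*}
\|\nabla u_{\le N}\|_{L^2_tL^4_x(I)} \le C_u\bigl(1+N^{3/2}K^{1/2}\bigr) + C_u\sum_{M\le N}\bigl(\tfrac MN\bigr)^{\theta}\|\nabla u_{\le M}\|_{L^2_tL^4_x(I)}
\end{equation*}
for some $\theta>0$. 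This is solved by a standard argument: the claimed bound is first checked directly for $N\le N_0(u)$ (self-interaction absorbed, cross-terms handled by energy and the per-$J_k$ bounds, with no recursion needed); for larger $N$ one splits the sum at $M=\eta N$, absorbs the tail $M\le\eta N$ using the factor $\eta^\theta$ and the bound at smaller frequencies, and controls the finitely many remaining scales by the inductive hypothesis; tracking constants closes the induction and yields \eqref{E:finite LTS} with $C_u$ independent of $I$.

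Finally, for \eqref{E:small LTS}: when $N\le N_0(\eta)$, Remark~\ref{R:small freq} makes $\|\nabla u_{\le N}(t_0)\|_{L^2_x}\le\eta$ and $\|u_{\le N}(t)\|_{L^4_x}\lesssim\eta$, so the linear term and the self-interaction term each gain a factor $\eta$; feeding this into the already-established recurrence propagates the gain down the dyadic scale and gives \eqref{E:small LTS}. I expect the main obstacle to be closing the recurrence with constants that are genuinely uniform in $I$: the bilinear gain $(N/N_k)^{1/2}$ degenerates when the interacting frequencies are comparable, so the frequency interactions must be organized so that a gain is always available except in honestly resonant configurations, and those must be controlled purely by the per-$J_k$ Strichartz bounds coming from local constancy --- together with the conserved energy, the only $I$-independent inputs at our disposal.
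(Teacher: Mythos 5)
Your toolkit is the right one (a Strichartz recurrence for $A(N):=\|\nabla u_{\le N}\|_{L_t^2L_x^4(I\times\R^4)}$, bilinear Strichartz applied on each characteristic interval $J_k$ and summed using $K\sim_u\sum_k N_k^{-3}$, the paraproduct and product-rule lemmas), and your bookkeeping for the driving term $N^{3/2}K^{1/2}$ matches the paper. But the recurrence you propose points in the wrong direction, and this conceals the genuinely hard terms. When the output frequency is $\le N$, at least two inputs are $\gtrsim N$, and the dangerous configurations are those in which these comparable inputs lie strictly between $N$ and $c_0N(t)$: there the bilinear gain $(N/M)^{1/2}$ is $O(1)$, and your plan to dispose of such ``resonant pieces interval by interval by the $I$-independent per-$J_k$ bounds'' fails, because summing $O_u(1)$ contributions over the $J_k\subset I$ costs a factor of the \emph{number} of such intervals, which is not controlled by $1+N^3K$ (each $J_k$ contributes only $\delta N_k^{-3}\le\delta$ to $K$, while the number of intervals in $I$ is unbounded). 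Energy alone likewise gives a coefficient $O(1)$ at $M\sim N$ that cannot be absorbed. The paper's mechanism for exactly these terms is the additional decomposition of $u$ at the frequency $c_0N(t)$: factors below $c_0N(t)$ are small in energy by Remark~\ref{R:small freq} (producing terms like $\eta_0^{5/3}A(N/\eta_0)$), factors above $c_0N(t)$ are paired with $u_{\le N/\eta_0}$ in Lemma~\ref{L:bilinear Strichartz} (so the $N_k^{-3/2}$ weight makes the sum over $J_k$ produce $K^{1/2}$), and the all-high term $F(u_{>N/\eta_0})$ is bounded via Lemma~\ref{L:neg deriv} by $\sum_{M>N/\eta_0}(N/M)^{5/3}A(M)$. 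The resulting recurrence involves $A(M)$ only for frequencies $M$ \emph{above} $N$, and is closed by a \emph{downward} induction whose base case $N\ge N_{\max}$ comes from Lemma~\ref{L:ST-N(t)}. Your recurrence $\sum_{M\le N}(M/N)^{\theta}A(M)$ with base case at small $N$ is not what the estimates yield (there is no mechanism converting the high-high interactions into $A(M)$ with $M\le N$ and a small coefficient), and even taken at face value it does not close: the dyadic scales $\eta N<M\le N$ carry no small factor, so neither absorption into the left-hand side nor the inductive hypothesis handles them with a constant independent of the structural constant $C_u$.

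The second gap concerns \eqref{E:small LTS}. The smallness you extract from Remark~\ref{R:small freq} for $N\le N_0(\eta)$ enters only the linear term and the low-low self-interaction; the dominant term $N^{3/2}K^{1/2}$ arises from the bilinear piece, whose coefficient in your accounting is an $O_u(1)$ product of per-interval $S_0^*$ norms and does not shrink as $N\to 0$, so no propagation of smallness through the recurrence can attach a factor $\eta$ to it. This is precisely why the paper keeps the factor $\sup_{J_k\subset I}\|\nabla u_{\le N/\eta_0}\|_{S_0^*(J_k)}$ in its recurrence and proves separately (Lemma~\ref{L:small freq}) that this quantity tends to $0$ as $N\to0$, uniformly in $k$; that vanishing requires its own argument (splitting $u$ at frequency $N^{1/2}$ and using the per-interval Strichartz bounds), which is absent from your sketch. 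As written, then, your proposal establishes neither \eqref{E:finite LTS} with constants uniform in $I$ nor the $\eta$-smallness in \eqref{E:small LTS}.
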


\begin{proof}
Fix a compact time interval $I\subset [0, \Tmax)$, which is a union of contiguous intervals $J_k$.  Throughout the proof all spacetime norms will be on $I\times\R^4$, unless we specify otherwise. Let $\eta_0>0$ be a small parameter to be chosen later.  By Remark~\ref{R:small freq}, there exists $c_0=c_0(\eta_0)$ such that
\begin{align}\label{c_0}
\|\nabla u_{\leq c_0 N(t)}\|_{L_t^\infty L^2_x}\leq \eta_0.             
\end{align}

For $N>0$ we define
$$
A(N):=\|\nabla u_{\leq N}\|_{L_t^2 L_x^4(I\times\R^4)}.
$$
Note that Lemma~\ref{L:ST-N(t)} implies
\begin{align}\label{E:N large}
A(N)\lesssim_u 1 + N^{3/2} K^{1/2} \quad \text{whenever}\quad N\geq \Biggl( \frac{\int_I N(t)^2\, dt}{\int_I N(t)^{-1}\, dt} \Biggr)^{1/3},
\end{align}
and, in particular, whenever $N\geq N_{max}:= \sup_{t\in I}N(t)$.  We will obtain the result for arbitrary frequencies $N>0$ by induction.  Our first step is to obtain a recurrence relation for $A(N)$.
We start with an application of the Strichartz inequality:
\begin{align}\label{A est}
A(N)\lesssim \inf_{t\in I}\|\nabla u_{\leq N}(t)\|_{L^2_x} + \bigl\|  \nabla P_{\leq N} F(u) \bigr\|_{L_t^2 L_x^{4/3}}.
\end{align}
We decompose $u=u_{\leq N/\eta_0} + u_{>N/\eta_0}$ and then further decompose $u(t)=u_{\leq c_0 N(t)}(t) + u_{>c_0N(t)}(t)$ to obtain
\begin{align}\label{decomp}
\nabla F(u)&=\nabla F\bigl( u_{> N/\eta_0}\bigr) + \O\bigl(\nabla  u_{> N/\eta_0} u_{\leq N/\eta_0} u_{\leq c_0N(t)}\bigr) + \O\bigl(\nabla  u_{\leq N/\eta_0} u_{\leq c_0N(t)}^2\bigr) \notag\\
&\quad + \O\bigl(\nabla  u_{> N/\eta_0} u_{\leq N/\eta_0} u_{> c_0N(t)}\bigr)+ \O\bigl(\nabla  u_{\leq N/\eta_0} u_{> c_0N(t)}^2\bigr).
\end{align}
Next, we will estimate the contributions of each of these terms to \eqref{A est}.

To estimate the contribution of the first term on the right-hand side of \eqref{decomp} to \eqref{A est} we use the Bernstein inequality followed by Lemma~\ref{L:neg deriv}, Lemma~\ref{L:product rule}, H\"older, and Sobolev embedding:
\begin{align}\label{1}
\bigl\|  \nabla P_{\leq N} F\bigl( u_{> N/\eta_0}\bigr) \bigr\|_{L_t^2 L_x^{4/3}}
&\lesssim N^{5/3} \bigl\|  |\nabla|^{-2/3} F\bigl( u_{> N/\eta_0}\bigr) \bigr\|_{L_t^2 L_x^{4/3}}\notag\\
&\lesssim N^{5/3} \bigl\|  |\nabla|^{-2/3} u_{> N/\eta_0} \bigr\|_{L_t^2 L_x^4} \bigl\|  |\nabla|^{2/3}\O\bigl( u^2_{> N/\eta_0}\bigr) \bigr\|_{L_t^\infty L_x^{3/2}}\notag\\
&\lesssim  N^{5/3} \bigl\|  |\nabla|^{-2/3} u_{> N/\eta_0} \bigr\|_{L_t^2 L_x^4}  \bigl\|  |\nabla|^{2/3} u_{> N/\eta_0} \bigr\|_{L_t^\infty L_x^{12/5}} \| u_{> N/\eta_0} \|_{L_t^\infty L_x^{4}}\notag\\
&\lesssim  N^{5/3} \bigl\|  |\nabla|^{-2/3} u_{> N/\eta_0} \bigr\|_{L_t^2 L_x^4} \|u\|_{L_t^\infty \dot H^1_x}^2\notag\\
&\lesssim_u \sum_{M> N/\eta_0} \Bigl( \frac NM \Bigr)^{5/3} A(M).
\end{align}

We turn now to the contribution to \eqref{A est} of the second term on the right-hand side of \eqref{decomp}.  Employing again Bernstein's inequality, Lemma~\ref{L:neg deriv}, H\"older, Sobolev embedding, and \eqref{c_0},
we obtain
\begin{align}\label{2}
\bigl\|  P_{\leq N}  &\O\bigl(\nabla  u_{> N/\eta_0} u_{\leq N/\eta_0} u_{\leq c_0N(t)}\bigr)  \bigr\|_{L_t^2 L_x^{4/3}}\notag\\
&\lesssim N^{2/3} \bigl\| |\nabla|^{-2/3}  \O\bigl(\nabla  u_{> N/\eta_0} u_{\leq N/\eta_0} u_{\leq c_0N(t)}\bigr)  \bigr\|_{L_t^2 L_x^{4/3}}\notag\\
&\lesssim N^{2/3} \bigl\| |\nabla|^{2/3}  u_{\leq N/\eta_0}\bigr\|_{L_t^2 L_x^6} \bigl\| |\nabla|^{-2/3} \O\bigl(\nabla  u_{> N/\eta_0} u_{\leq c_0N(t)}\bigr)  \bigr\|_{L_t^\infty L_x^{4/3}}\notag\\
&\lesssim N^{2/3} \| \nabla u_{\leq N/\eta_0}\|_{L_t^2 L_x^4}  \bigl\| |\nabla|^{2/3}  u_{\leq c_0N(t)}\bigl\|_{L_t^\infty L_x^{12/5}} \bigl\| |\nabla|^{1/3}  u_{> N/\eta_0} \bigl\|_{L_t^\infty L_x^2} \notag\\
&\lesssim_u N^{2/3} A\bigl(N/\eta_0\bigr) \eta_0  (N/\eta_0)^{-2/3}\notag\\
&\lesssim_u \eta_0^{5/3} A\bigl(N/\eta_0\bigr).
\end{align}

To estimate the contribution of the third term on the right-hand side of \eqref{decomp}, we use H\"older's inequality, Sobolev embedding, and \eqref{c_0}:
\begin{align}\label{3}
\bigl\|  P_{\leq N}  \O\bigl(\nabla  u_{\leq N/\eta_0} u_{\leq c_0N(t)}^2\bigr)  \bigr\|_{L_t^2 L_x^{4/3}}
&\lesssim  \| \nabla u_{\leq N/\eta_0}\|_{L_t^2 L_x^4} \| u_{\leq c_0N(t)}\|^2_{L_t^\infty L_x^4}
\lesssim \eta_0^2A\bigl(N/\eta_0\bigr).
\end{align} 

We consider next the contribution of the fourth term on the right-hand side of \eqref{decomp}.  By Bernstein and then H\"older,
\begin{align*}
\bigl\|  P_{\leq N}  \O\bigl(\nabla  u_{> N/\eta_0} u_{\leq N/\eta_0} u_{> c_0N(t)}\bigr)  \bigr\|_{L_t^2 L_x^{4/3}}
&\lesssim N\bigl\| \nabla  u_{> N/\eta_0} u_{\leq N/\eta_0} u_{> c_0N(t)}\bigr\|_{L_t^2 L_x^1}\\
&\lesssim N\| \nabla  u_{> N/\eta_0} \|_{L_t^\infty L_x^2} \| u_{\leq N/\eta_0} u_{> c_0N(t)} \|_{L_{t,x}^2}\\
&\lesssim_u N\| u_{\leq N/\eta_0} u_{> c_0N(t)}  \|_{L_{t,x}^2}.
\end{align*}
To continue, we use the decomposition of the time interval $I$ into subintervals $J_k$ where $N(t)$ is constant and apply the bilinear Strichartz estimate Lemma~\ref{L:bilinear Strichartz} on each of these subintervals.
Note that by Lemma~\ref{L:ST-N(t)} and H\"older's inequality, on each $J_k$ we have
$$
\|\nabla u\|_{L_t^2L_x^4(J_k\times\R^4)}+\|\nabla F(u)\|_{L_{t,x}^{3/2}(J_k\times\R^4)} \lesssim_u 1 \quad \text{and hence }\quad \|\nabla u\|_{S_0^*(J_k)}\lesssim_u 1.
$$
Thus, using also Bernstein's inequality,
\begin{align*}
 \| u_{\leq N/\eta_0} u_{> c_0N(t)}  \bigr\|_{L_{t,x}^2(J_k\times\R^4)}
&\lesssim \frac{(N/\eta_0)^{1/2}}{(c_0N_k)^{1/2}} \|\nabla u_{\leq  N/\eta_0}\|_{S_0^*(J_k)}\|u_{> c_0N_k}\|_{S_0^*(J_k)}\\
&\lesssim_u \frac {N^{1/2}}{\eta_0^{1/2}c_0^{3/2}N_k^{3/2}}  \|\nabla u_{\leq  N/\eta_0}\|_{S_0^*(J_k)}.
\end{align*}
Let us remark that the term $ \|\nabla u_{\leq  N/\eta_0}\|_{S_0^*(J_k)}$ will be source of one of the small parameters $\eta$ in claim \eqref{E:small LTS} and this is why we choose to keep it.  Summing the estimates above over the subintervals $J_k$ and invoking again the local constancy property Lemma~\ref{L:local const}, we find
\begin{align}\label{E:bilinear}
\| u_{\leq N/\eta_0} u_{> c_0N(t)}  \bigr\|_{L_{t,x}^2(I\times\R^4)}
&\lesssim_u  \frac{ N^{1/2}}{\eta_0^{1/2}c_0^{3/2}} \Bigl(\sum_{J_k\subset I} \frac1{N_k^3}\Bigr)^{1/2} \sup_{J_k\subset I}  \|\nabla u_{\leq  N/\eta_0}\|_{S_0^*(J_k)}\notag\\
&\lesssim_u  \frac{N^{1/2} K^{1/2}}{\eta_0^{1/2}c_0^{3/2}}  \sup_{J_k\subset I}  \|\nabla u_{\leq  N/\eta_0}\|_{S_0^*(J_k)}.
\end{align}
Thus, the contribution of the fourth term on the right-hand side of \eqref{decomp} can be bounded as follows:
\begin{align}\label{4}
\bigl\|  P_{\leq N}  \O\bigl(\nabla  u_{> N/\eta_0} u_{\leq N/\eta_0}& u_{> c_0N(t)}\bigr)  \bigr\|_{L_t^2 L_x^{4/3}} 
\lesssim_u  \frac{N^{3/2} K^{1/2}}{\eta_0^{1/2}c_0^{3/2}} \sup_{J_k\subset I}  \|\nabla u_{\leq  N/\eta_0}\|_{S_0^*(J_k)}.
\end{align}

We are left with the contribution to \eqref{A est} of the last term on the right-hand side of \eqref{decomp}.  Using the H\"older inequality combined with the arguments used to establish \eqref{E:bilinear} and one more application of the Bernstein inequality, we find
\begin{align}\label{5}
\bigl\|  P_{\leq N} \O\bigl(\nabla  u_{\leq N/\eta_0} u_{> c_0N(t)}^2\bigr)\bigr\|_{L_t^2 L_x^{4/3}}
&\lesssim \| u_{> c_0N(t)} \|_{L_t^\infty L_x^4} \| \nabla u_{\leq N/\eta_0} u_{> c_0N(t)} \|_{L_{t,x}^2}\notag\\
&\lesssim_u  \frac{N^{1/2} K^{1/2}}{\eta_0^{1/2}c_0^{3/2}} \sup_{J_k\subset I}  \|\Delta u_{\leq  N/\eta_0}\|_{S_0^*(J_k)}\notag\\
&\lesssim_u  \frac{N^{3/2} K^{1/2}}{\eta_0^{3/2}c_0^{3/2}} \sup_{J_k\subset I}  \|\nabla u_{\leq  N/\eta_0}\|_{S_0^*(J_k)}.
\end{align}

Putting everything together, we obtain
\begin{align}\label{A est all}
A(N)
&\lesssim_u \inf_{t\in I}\|\nabla u_{\leq N}(t)\|_{L^2_x} + \frac{N^{3/2} K^{1/2}}{\eta_0^{3/2}c_0^{3/2}} \sup_{J_k\subset I}  \|\nabla u_{\leq  N/\eta_0}\|_{S_0^*(J_k)}
	+  \sum_{M> \frac{N}{\eta_0}} \Bigl( \frac NM \Bigr)^{5/3} A(M).
\end{align}
The inductive step in the proof of claims \eqref{E:finite LTS} and \eqref{E:small LTS} will rely on this recurrence relation.

Let us first address \eqref{E:finite LTS}.  Recall that by \eqref{E:N large}, the claim holds for $N\geq N_{max}$, that is,
\begin{align}\label{base step}
A(N)\leq  C(u) \bigl[1+N^{3/2}K^{1/2}\bigr],
\end{align}
for some constant $C(u)>0$ and all $N\geq N_{max}$.  Rewriting \eqref{A est all} as
\begin{align}\label{A est all 1}
A(N)&\leq \tilde C(u)\Bigl\{1+ \frac{N^{3/2} K^{1/2}}{\eta_0^{3/2}c_0^{3/2}}+  \sum_{M> \frac{N}{\eta_0}} \Bigl( \frac NM \Bigr)^{5/3} A(M)\Bigr\},
\end{align}
we can inductively prove the claim by halving the frequency $N$ at each step.  For example, assuming that \eqref{base step} holds for frequencies larger or equal to $N$, an application of \eqref{A est all 1}
(with $\eta_0\leq 1/2$) yields
\begin{align*}
A\bigl(N/2\bigr)
&\leq  \tilde C(u)\Bigl\{1+ \frac{(N/2)^{3/2} K^{1/2}}{\eta_0^{3/2}c_0^{3/2}}+  C(u) \sum_{M> \frac{N}{2\eta_0}} \Bigl( \frac N{2M} \Bigr)^{5/3}  \bigl[1+M^{3/2}K^{1/2}\bigr] \Bigr\}\\
&\leq \tilde C(u)\Bigl\{1+ \frac{(N/2)^{3/2} K^{1/2}}{\eta_0^{3/2}c_0^{3/2}}+  \eta_0^{5/3}C(u) + \eta_0^{1/6} C(u) (N/2)^{3/2}K^{1/2}\Bigr\}.
\end{align*}
Choosing $\eta_0=\eta_0(u)$ small enough so that $ \eta_0^{1/6}\tilde C(u)\leq 1/2$, we thus obtain
$$
A\bigl(N/2\bigr)\leq   \frac12 C(u) \Bigl\{1+(N/2)^{3/2}K^{1/2}\Bigr\} + \tilde C(u)\Bigl\{1 + \frac{(N/2)^{3/2} K^{1/2}}{\eta_0^{3/2}c_0^{3/2}}\Bigr\}.
$$
The claim now follows by setting $C(u)\geq 2\tilde C(u)\eta_0^{-3/2}c_0^{-3/2}$.

Next we turn to \eqref{E:small LTS}.  To exhibit the small constant $\eta$, we will need the following

\begin{lemma}[Vanishing of the small frequencies]\label{L:small freq}
Under the assumptions of Theorem~\ref{T:LTS}, we have
$$
f(N):= \|\nabla u_{\leq N}\|_{L_t^\infty L_x^2([0, \Tmax)\times\R^4)} + \sup_{J_k\subset [0, \Tmax)}\|\nabla u_{\leq N}\|_{S_0^*(J_k)} \to 0 \quad \text{as} \quad N\to 0.
$$
\end{lemma}
 
\begin{proof}
As by hypothesis $\inf_{t\in [0, \Tmax)}N(t)\geq 1$, Remark~\ref{R:small freq} yields
\begin{align}\label{E:energy to 0}
\lim_{N\to 0} \|\nabla u_{\leq N}\|_{L_t^\infty L_x^2([0, \Tmax)\times\R^4)}=0.
\end{align}

Now fix a characteristic interval $J_k\subset [0, \Tmax)$ and recall that all Strichartz norms of $u$ are bounded on $J_k$; cf. Lemma~\ref{L:ST-N(t)}.  In particular, we have
\begin{align*}
\|\nabla u\|_{L_t^2L_x^4(J_k\times\R^4)} + \|u\|_{L_t^3L_x^{12}(J_k\times\R^4)} + \| u\|_{L_{t,x}^6(J_k\times\R^4)}\lesssim_u 1.
\end{align*}
Using this followed by the decomposition $u= u_{\leq N^{1/2}}+u_{> N^{1/2}}$, H\"older, and Bernstein, for any frequency $N>0$ we estimate
\begin{align*}
\|\nabla u_{\leq N}\|_{S_0^*(J_k)}
&=  \|\nabla u_{\leq N}\|_{L_t^\infty L_x^2} + \|\nabla P_{\leq N} F(u)\|_{L_{t,x}^{3/2}}\\
&\lesssim \|\nabla u_{\leq N}\|_{L_t^\infty L_x^2} + \|\nabla P_{\leq N} F(u_{> N^{1/2}})\|_{L_{t,x}^{3/2}} + \|\nabla u_{> N^{1/2}} u_{\leq N^{1/2}} u\|_{L_{t,x}^{3/2}}\\
&\quad + \|\nabla u_{\leq N^{1/2}} u^2\|_{L_{t,x}^{3/2}} \\
&\lesssim \|\nabla u_{\leq N}\|_{L_t^\infty L_x^2} + N \|u_{> N^{1/2}}\|_{L_t^2L_x^4}\|u_{> N^{1/2}}\|_{L_{t,x}^6}\|u_{> N^{1/2}}\|_{L_t^\infty L_x^4}\\
&\quad +  \|\nabla u_{> N^{1/2}}\|_{L_t^2L_x^4} \|u_{\leq N^{1/2}}\|_{L_t^\infty L_x^4} \|u\|_{L_{t,x}^6} +  \|\nabla u_{\leq N^{1/2}}\|_{L_t^\infty L_x^2}\|u\|_{L_t^3L_x^{12}}^2\\
&\lesssim_u \|\nabla u_{\leq N}\|_{L_t^\infty L_x^2} + N^{1/2} + \|\nabla u_{\leq N^{1/2}}\|_{L_t^\infty L_x^2}.
\end{align*}
All spacetime norms in the estimates above are on $J_k\times\R^4$.  As $J_k\subset [0, \Tmax)$ was arbitrary, we find
$$
\sup_{J_k\subset [0, \Tmax)}\|\nabla u_{\leq N}\|_{S_0^*(J_k)}\lesssim_u  N^{1/2}+  \|\nabla u_{\leq N}\|_{L_t^\infty L_x^2([0, \Tmax)\times\R^4)}  + \|\nabla u_{\leq N^{1/2}}\|_{L_t^\infty L_x^2([0, \Tmax)\times\R^4)}.
$$
The claim now follows by combining this with \eqref{E:energy to 0}.
\end{proof}

We are now ready to prove \eqref{E:small LTS}.  Using \eqref{E:finite LTS} and Lemma~\ref{L:small freq}, the estimate \eqref{A est all} implies
\begin{align*}
A(N)&\lesssim_u f(N) + \frac{N^{3/2} K^{1/2}}{\eta_0^{3/2}c_0^{3/2}}  f(N) +  \sum_{M> \frac{N}{\eta_0}} \Bigl( \frac NM \Bigr)^{5/3} A(M)\\
&\lesssim_u f(N) + \eta_0^{5/3} + \Biggl\{ \frac{f(N)}{\eta_0^{3/2}c_0^{3/2}} + \eta_0^{1/6} \Biggr\} N^{3/2}K^{1/2}.
\end{align*}
Thus, for any $\eta>0$, choosing first $\eta_0=\eta_0(\eta)$ such that $\eta_0^{1/6}\leq \eta$ and then $N_0=N_0(\eta)$ such that $\frac{f(N_0)}{\eta_0^{3/2}c_0^{3/2}}\leq \eta$, we obtain
$$
A(N)\lesssim_u \eta \bigl(1+ N^{3/2}K^{1/2}\bigr) \quad \text{for all} \quad N\leq N_0.
$$
This completes the proof of Theorem~\ref{T:LTS}.
\end{proof}

The following consequence of Theorem~\ref{T:LTS} will be useful in Section~\ref{S:no soliton}.

\begin{corollary}[Low and high frequencies control]\label{C:LTS}
Let $u:[0, \Tmax)\times\R^4\to \C$ be an almost periodic solution to \eqref{nls} with $N(t)\equiv N_k \geq 1$ on each characteristic interval $J_k\subset [0, \Tmax)$.  Then, on any compact time interval
$I\subset [0, \Tmax)$, which is a union of contiguous subintervals $J_k$, and for any frequency $N>0$,
\begin{align}\label{LTS:high}
\|u_{\geq N}\|_{L_t^q L_x^r (I\times\R^4)} \lesssim_u N^{-1} (1+N^3K)^{1/q} \quad \text{for all} \quad \tfrac1q+\tfrac2r=1 \text{ with } 3<q\leq \infty.
\end{align} 
Moreover, for any $\eta>0$ there exists $N_0=N_0(\eta)$ such that for all $N\leq N_0$ we have
\begin{align}\label{LTS:low}
\|\nabla u_{\leq N}\|_{L_t^q L_x^r (I\times\R^4)} \lesssim_u \eta(1+N^3K)^{1/q} \quad \text{for all} \quad \tfrac1q+\tfrac2r=1 \text{ with } 2\leq q\leq \infty.
\end{align} 
The constant $N_0$ and the implicit constants in \eqref{LTS:high} and \eqref{LTS:low} are independent of the interval $I$.
\end{corollary}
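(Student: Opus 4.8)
The plan is to obtain both bounds from the long-time Strichartz estimate Theorem~\ref{T:LTS} by interpolation, supplemented in the case of \eqref{LTS:high} by an elementary dyadic summation; since the constants in Theorem~\ref{T:LTS} and the threshold $N_0$ there are independent of $I$, so are the ones produced here. I will treat the low-frequency bound \eqref{LTS:low} first. Fix $\eta>0$. For $N$ small enough we have, on the one hand, $\|\nabla u_{\leq N}\|_{L^2_tL^4_x(I\times\R^4)}\lesssim_u\eta\,(1+N^{3/2}K^{1/2})$ from \eqref{E:small LTS}, and, on the other hand, $\|\nabla u_{\leq N}\|_{L^\infty_tL^2_x(I\times\R^4)}\leq\eta$ from Remark~\ref{R:small freq}; the latter uses crucially the hypothesis $N(t)\geq1$, which guarantees that a fixed low spatial frequency is low relative to $N(t)$ uniformly in $t$. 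Every admissible pair $(q,r)$ with $2\le q\le\infty$ arises by interpolating the endpoints $(2,4)$ and $(\infty,2)$ with weight $\theta=2/q$, so
\[
\|\nabla u_{\leq N}\|_{L^q_tL^r_x}\leq\|\nabla u_{\leq N}\|_{L^2_tL^4_x}^{2/q}\,\|\nabla u_{\leq N}\|_{L^\infty_tL^2_x}^{\,1-2/q}\lesssim_u\eta\,(1+N^{3/2}K^{1/2})^{2/q}\lesssim_u\eta\,(1+N^3K)^{1/q},
\]
where the last step uses $(1+x)^2\le2(1+x^2)$ with $x=N^{3/2}K^{1/2}$. Choosing $N_0$ accordingly proves \eqref{LTS:low}.

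For \eqref{LTS:high} the first step is to upgrade \eqref{E:finite LTS} to all admissible exponents: interpolating $\|\nabla u_{\leq M}\|_{L^2_tL^4_x}\lesssim_u1+M^{3/2}K^{1/2}$ against the uniform bound $\|\nabla u_{\leq M}\|_{L^\infty_tL^2_x}\lesssim_u1$ (conservation of energy) gives $\|\nabla u_{\leq M}\|_{L^q_tL^r_x}\lesssim_u(1+M^3K)^{1/q}$ for every admissible $(q,r)$ with $2\le q\le\infty$. Since $P_M=P_MP_{\leq2M}$ and $P_M$ is bounded on $L^q_tL^r_x$, Bernstein's inequality then yields, for each dyadic frequency $M$,
\[
\|u_M\|_{L^q_tL^r_x}\sim M^{-1}\|\nabla u_M\|_{L^q_tL^r_x}\lesssim M^{-1}\|\nabla u_{\leq2M}\|_{L^q_tL^r_x}\lesssim_u M^{-1}(1+M^3K)^{1/q}.
\]

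It remains to sum these single-frequency estimates over dyadic $M\ge N$ via the triangle inequality in $L^q_tL^r_x$. I would split the sum at the crossover scale $M\sim K^{-1/3}$, at which $1+M^3K$ transitions from $\sim1$ to $\sim M^3K$. On the range $N\le M\lesssim K^{-1/3}$ (which is nonempty only when $N\lesssim K^{-1/3}$) the summand is $\sim M^{-1}$, a geometric series dominated by its largest term and hence $\lesssim N^{-1}$. On the range $M\gtrsim\max(N,K^{-1/3})$ the summand is $\sim K^{1/q}M^{3/q-1}$, and since $q>3$ the exponent $3/q-1$ is strictly negative, so this too is a geometric series dominated by its first term $\sim K^{1/q}\max(N,K^{-1/3})^{3/q-1}$. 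Distinguishing the cases $N\le K^{-1/3}$ (so $1+N^3K\sim1$) and $N>K^{-1/3}$ (so $1+N^3K\sim N^3K$) and adding the two contributions, one checks that in either case the total is $\lesssim_u N^{-1}(1+N^3K)^{1/q}$, which is \eqref{LTS:high}.

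The interpolation steps and the passage to single frequencies are routine; the one step that requires genuine care is the dyadic sum for \eqref{LTS:high}. In particular, the restriction $q>3$ is precisely what makes the high-frequency tail summable, and one must keep track of the crossover scale $K^{-1/3}$ to see that the pieces recombine into $N^{-1}(1+N^3K)^{1/q}$ and not a strictly larger quantity. Beyond Theorem~\ref{T:LTS} and Remark~\ref{R:small freq}, only standard Littlewood--Paley, Bernstein, and Strichartz facts are used.
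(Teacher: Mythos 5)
Your proposal is correct and uses essentially the same ingredients as the paper: for \eqref{LTS:low} it is identical (interpolate \eqref{E:small LTS} against the $L_t^\infty L_x^2$ smallness of $\nabla u_{\leq N}$ supplied by Remark~\ref{R:small freq} and $N(t)\geq 1$). For \eqref{LTS:high} the paper first sums the dyadic pieces of \eqref{E:finite LTS} into a negative-order bound $\bigl\||\nabla|^{-1/2-\eps}u_{\geq N}\bigr\|_{L_t^2L_x^4}$ and then interpolates once with the energy bound, whereas you interpolate frequency-by-frequency, apply Bernstein, and sum directly in $L_t^qL_x^r$; both versions rest on the same use of Theorem~\ref{T:LTS}, the energy bound, and the restriction $q>3$ for summability of the high-frequency tail, so the reordering is immaterial.
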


\begin{proof}
We first address \eqref{LTS:high}.  By \eqref{E:finite LTS} and Bernstein's inequality, for any $\eps>0$ and any frequency $N>0$ we have
\begin{align*}
\bigl\||\nabla|^{-1/2-\eps}u_{\geq N}\bigr \|_{L_t^2L_x^4(I\times\R^4)} 
&\lesssim \sum_{M\geq N} M^{-3/2-\eps} \|\nabla u_M \|_{L_t^2L_x^4(I\times\R^4)} \\
&\lesssim_u \sum_{M\geq N} M^{-3/2-\eps} (1+M^{3/2}K^{1/2})\\
&\lesssim_u N^{-3/2-\eps} (1+N^3K)^{1/2}.
\end{align*}
The claim now follows by interpolating with the energy bound:
\begin{align*}
\|u_{\geq N}\|_{L_t^q L_x^r (I\times\R^4)}
&\lesssim \bigl\||\nabla|^{-\frac12-\frac{q-3}2}u_{\geq N} \bigr\|_{L_t^2L_x^4(I\times\R^4)}^{2/q} \|\nabla u_{\geq N} \|_{L_t^\infty L_x^2(I\times\R^4)}^{1-2/q} 
\lesssim_u N^{-1} (1+N^3K)^{1/q},
\end{align*}
whenever $\frac1q+\frac2r=1$ and $3<q\leq \infty$.

We turn now to \eqref{LTS:low}.  As $\inf_{t\in I} N(t)\geq 1$, Remark~\ref{R:small freq} yields that for any $\eta>0$ there exists $N_0(\eta)$ such that for all $N\leq N_0$,
$$
\|\nabla u_{\leq N}\|_{L_t^\infty L_x^2(I\times\R^4)}\leq \eta.
$$
The claim follows by interpolating with \eqref{E:small LTS}.
\end{proof}

%%%%%%%%%%%%%%%%%%%%%%%%%%%%%%%%%%%%%%%%%%%%%%%%%%%%%%%%%%%%%%%%%%%%%%%%%%%%%%%%%%%%%%%%%%%
%
%
%                                   Section
%
%
%%%%%%%%%%%%%%%%%%%%%%%%%%%%%%%%%%%%%%%%%%%%%%%%%%%%%%%%%%%%%%%%%%%%%%%%%%%%%%%%%%%%%%%%%%%

\section{The rapid frequency-cascade scenario}\label{S:no cascade}

In this section, we preclude the existence of almost periodic solutions as in Theorem~\ref{T:enemies} for which $\int_0^{\Tmax}N(t)^{-1}\,dt<\infty$.  We will show their existence is inconsistent with the conservation of mass.

\begin{theorem}[No rapid frequency-cascades]\label{T:no cascade}
There are no almost periodic solutions $u:[0, \Tmax)\times\R^4\to \C$ to \eqref{nls} with $N(t)\equiv N_k \geq 1$ on each characteristic interval $J_k\subset [0, \Tmax)$
such that $\|u\|_{L^6_{t,x}( [0, \Tmax) \times \R^4)} =+\infty$ and
\begin{align}\label{finite int}
\int_0^{\Tmax}N(t)^{-1}\,dt<\infty.
\end{align}
\end{theorem}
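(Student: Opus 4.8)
The plan is to combine the long-time Strichartz estimate of Theorem~\ref{T:LTS} with the no-waste Duhamel formula (Proposition~\ref{P:duhamel}) and conservation of mass in order to show that such a solution has zero mass, hence is identically zero, contradicting $\|u\|_{L^6_{t,x}}=+\infty$. First I would observe that since $\int_0^{\Tmax}N(t)^{-1}\,dt<\infty$, the quantity $K=\int_I N(t)^{-1}\,dt$ is bounded uniformly over all compact $I\subset[0,\Tmax)$ that are unions of characteristic intervals; call this bound $K_0<\infty$. Thus \eqref{E:finite LTS} and \eqref{E:small LTS} give bounds that are uniform in $I$ and depend on $N$ only through $1+N^{3/2}K_0^{1/2}$; in particular, for each fixed $N>0$,
\begin{align*}
\|\nabla u_{\leq N}\|_{L_t^2L_x^4([0,\Tmax)\times\R^4)} \lesssim_u 1+N^{3/2}K_0^{1/2},
\end{align*}
and the analogous statement with the extra $\eta$-gain holds for $N\leq N_0(\eta)$. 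Combined with the energy bound and interpolation (as in Corollary~\ref{C:LTS}), this yields uniform control of $\|u_{\leq N}\|$ in various Strichartz norms on the whole lifespan.

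Next I would use Proposition~\ref{P:duhamel} to represent $P_{\leq N}u(t)$ as the weak-$\dot H^1_x$ limit $i\lim_{T\to\Tmax}\int_t^T e^{i(t-s)\Delta}P_{\leq N}F(u(s))\,ds$, and I would estimate its $L^2_x$ norm rather than its $\dot H^1_x$ norm in order to access the mass. By the Strichartz inequality (applied without a derivative, using the dual pair $(2,4/3)$ or $(3/2,3/2)$) together with Bernstein to convert powers of $N$, one bounds $\|P_{\leq N}u\|_{L_t^\infty L_x^2}$ on $[0,\Tmax)$ by a constant times $\|P_{\leq N}F(u)\|$ in a suitable dual Strichartz norm. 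The nonlinearity $F(u)=\O(u^3)$ is then split using the same low/high frequency decomposition as in the proof of Theorem~\ref{T:LTS}: the genuinely low-frequency pieces are controlled by the $\eta$-small estimate \eqref{E:small LTS}/\eqref{LTS:low}, while the high-frequency pieces carry negative powers of $N$ coming from Bernstein acting on $P_{\leq N}$ applied to a product of factors at least one of which is at frequency $\gtrsim N/\eta_0$. The upshot should be an estimate of the form
\begin{align*}
\|u_{\leq N}\|_{L_t^\infty L_x^2([0,\Tmax)\times\R^4)} \lesssim_u o_N(1)\cdot\bigl(1+N^{3}K_0\bigr)^{\theta}
\end{align*}
for some $\theta<\infty$ and some gain $o_N(1)\to 0$ as $N\to 0$, which forces $\lim_{N\to 0}\|u_{\leq N}\|_{L_t^\infty L_x^2}=0$, i.e.\ the solution has finite mass equal to $\lim_{N\to\infty}\|u_{\leq N}\|_{L_t^\infty L_x^2}^2$; in fact, pushing $N\to 0$ and using that $u$ is almost periodic with $N(t)\gtrsim 1$ (so low frequencies carry no energy, Remark~\ref{R:small freq}), the mass itself must vanish.

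The main obstacle I anticipate is bookkeeping the powers of $N$ so that the final bound genuinely goes to zero as $N\to 0$: one must check that every term in the expansion of $P_{\leq N}F(u)$ either gains a positive power of $N$ (from Bernstein on a low-frequency output sitting over a product with a high-frequency factor, exactly as in \eqref{1}--\eqref{5}) or gains a factor of $\eta$ (from \eqref{E:small LTS} applied to a factor $u_{\leq N/\eta_0}$ or $u_{\leq c_0N(t)}$), with no term that is merely $O(1)$. A second, more technical point is that $L^2_x$ is not a Strichartz-admissible \emph{endpoint} in the same way $\dot H^1_x$ is, so one should work with $P_{\leq N}u$, where Bernstein freely trades $\nabla$ for $N$, and be slightly careful that the no-waste Duhamel limit, a priori only weak in $\dot H^1_x$, also identifies $P_{\leq N}u(t)$ as the corresponding Duhamel integral in the $L^2_x$-sense — this follows from the uniform Strichartz bounds just established, which make the truncated Duhamel integrals Cauchy in the relevant norm. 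Once the mass is shown to be zero, $u\equiv 0$ and the contradiction with $\|u\|_{L^6_{t,x}}=+\infty$ is immediate.
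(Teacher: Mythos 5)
Your high-level plan (long-time Strichartz plus no-waste Duhamel plus mass conservation, ending in zero mass) is the paper's, but the proposal omits the one consequence of \eqref{finite int} that actually makes it work: $N(t)\to\infty$ as $t\to\Tmax$ (by Corollary~\ref{C:blowup criterion} when $\Tmax<\infty$, and from \eqref{finite int} together with $|J_k|\sim_u N_k^{-2}$ when $\Tmax=\infty$), whence by Remark~\ref{R:small freq} one has $\|\nabla u_{\leq N}(t)\|_{L^2_x}\to 0$ as $t\to\Tmax$ for each fixed $N$. The paper feeds this into the recurrence \eqref{A est all} on a nested sequence of intervals $I_n\uparrow[0,\Tmax)$, which deletes the additive constant in \eqref{E:finite LTS} and yields the crucial bound $\|\nabla u_{\leq N}\|_{L^2_tL^4_x([0,\Tmax)\times\R^4)}\lesssim_u N^{3/2}$, with no ``$1$''. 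You instead keep $1+N^{3/2}K_0^{1/2}$, and then the $o_N(1)$ gain you hope for when estimating $\|P_{\leq N}u\|_{L^\infty_tL^2_x}$ through Duhamel is simply not there: after the paraproduct step, the purely high-frequency contribution is of size $N^{2/3}\sum_{M>N}M^{-5/3}\bigl(1+M^{3/2}K_0^{1/2}\bigr)\sim N^{-1}+K_0^{1/2}N^{1/2}$, and the all-low-frequency term costs $\|u_{\leq N}\|_{L^2_tL^4_x}\lesssim N^{-1}\|\nabla u_{\leq N}\|_{L^2_tL^4_x}$, again $O(N^{-1})$ as $N\to 0$. So the obstacle you flag (``no term that is merely $O(1)$'') is real, and the mechanism that removes it is precisely the limiting argument over $I_n$ driven by $N(t)\to\infty$, which your write-up never invokes. (The paper then runs the Duhamel estimate at the $\dot H^1$ level, obtaining $\|\nabla u_{\leq N}\|_{L^\infty_tL^2_x}\lesssim_u N^{3/2}$, which after Bernstein summation is the quantitative version of your target.)

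There is a second gap at the end: even granting $\|u_{\leq N}\|_{L^\infty_tL^2_x}\to 0$ as $N\to 0$, the conclusion ``the mass itself must vanish'' does not follow. That limit only controls the mass below the cutoff; the mass at frequencies $\gtrsim 1$ is bounded by the energy but is in no way forced to be zero, and $N(t)\geq 1$ together with Remark~\ref{R:small freq} cannot exclude a soliton-like configuration with $N(t)\equiv 1$ and positive mass. The paper's ending again uses $N(t)\to\infty$: the uniform low-frequency bound gives $u\in L^\infty_t\dot H^{-1/4}_x$; at each time, interpolating almost periodicity with this negative-regularity bound makes the mass at frequencies $\leq c(\eta)N(t)$ of size $\eta^{1/5}$, while Bernstein bounds the mass at frequencies $\geq c(\eta)N(t)$ by $[c(\eta)N(t)]^{-2}$, which tends to zero as $t\to\Tmax$; mass conservation then gives $M(u)=0$ and hence $u\equiv 0$, contradicting the infinite $L^6_{t,x}$ norm. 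Both gaps are repairable, but only by bringing in the blowup criterion $N(t)\to\infty$, which is the heart of the rapid frequency-cascade argument.
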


\begin{proof}
We argue by contradiction.  Let $u$ be such a solution.  Then by Corollary~\ref{C:blowup criterion},
\begin{align}\label{N blowup}
\lim_{t\to \Tmax} N(t)=\infty,
\end{align}
whether $\Tmax$ is finite or infinite.  Thus, by Remark~\ref{R:small freq} we have
\begin{align}\label{vanishing}
\lim_{t\to \Tmax} \|\nabla u_{\leq N} (t)\|_{L_x^2}=0 \quad \text{for any} \quad N>0.
\end{align}

Now let $I_n$ be a nested sequence of compact subintervals of $[0, \Tmax)$ that are unions of contiguous characteristic subintervals $J_k$.  On each $I_n$ we may now apply Theorem~\ref{T:LTS}.
Specifically, using \eqref{A est all} together with the hypothesis \eqref{finite int}, we derive
\begin{align*}
A_n(N)&:= \|\nabla u_{\leq N}\|_{L_t^2L_x^4(I_n\times\R^4)}\\
&\lesssim_u \inf_{t\in I_n}\! \|\nabla u_{\leq N} (t)\|_{L_x^2} + \frac{N^{3/2}}{\eta_0^{3/2}c_0^{3/2}} \Bigl[\int_0^{\Tmax}N(t)^{-1}\,dt\Bigr]^{1/2}\!\!+\!\!  \sum_{M> \frac{N}{\eta_0}} \!\!\Bigl( \frac NM \Bigr)^{5/3} \! A_n(M)\\
&\lesssim_u \inf_{t\in I_n}\! \|\nabla u_{\leq N} (t)\|_{L_x^2}  + \frac{N^{3/2}}{\eta_0^{3/2}c_0^{3/2}} + \sum_{M> \frac{N}{\eta_0}} \Bigl( \frac NM \Bigr)^{5/3} A_n(M)
\end{align*}
for all frequencies $N>0$.  Arguing as for \eqref{E:finite LTS}, we find
$$
\|\nabla u_{\leq N}\|_{L_t^2L_x^4(I_n\times\R^4)}\lesssim_u \inf_{t\in I_n} \|\nabla u_{\leq N} (t)\|_{L_x^2} + N^{3/2} \quad \text{for all} \quad N>0.
$$
Letting $n$ tend to infinity and invoking \eqref{vanishing}, we obtain
\begin{align}\label{good Strich}
\|\nabla u_{\leq N}\|_{L_t^2L_x^4([0, \Tmax)\times\R^4)}\lesssim_u N^{3/2} \quad \text{for all} \quad N>0.
\end{align}

Our next claim is that \eqref{good Strich} implies
\begin{align}\label{good energy}
\|\nabla u_{\leq N}\|_{L_t^\infty L_x^2([0, \Tmax)\times\R^4)}\lesssim_u N^{3/2} \quad \text{for all} \quad N>0.
\end{align}
Fix $N>0$.  Using the Duhamel formula from Proposition~\ref{P:duhamel} together with the Strichartz inequality, the decomposition $u=u_{\leq N} + u_{> N}$, Lemma~\ref{L:neg deriv}, Lemma~\ref{L:product rule},
\eqref{good Strich}, Bernstein, H\"older, and Sobolev embedding, we find
\begin{align*}
\|\nabla u_{\leq N} \|_{L_t^\infty L_x^2}
&\lesssim \|\nabla P_{\leq N} F(u)\|_{L_t^2 L_x^{4/3}}\\
&\lesssim \|\nabla P_{\leq N}  F(u_{\leq N})\|_{L_t^2 L_x^{4/3}} + \|\nabla P_{\leq N} \O(u_{> N} u^2)\|_{L_t^2 L_x^{4/3}}\\
&\lesssim \|\nabla u_{\leq N}\|_{L_t^2 L_x^4} \|u_{\leq N}\|_{L_t^\infty L_x^4}^2 + N^{5/3} \bigl\||\nabla|^{-2/3} \O(u_{> N} u^2)\bigr\|_{L_t^2 L_x^{4/3}}\\
&\lesssim_u N^{3/2} + N^{5/3}  \bigl\||\nabla|^{-2/3} u_{> N}\bigr\|_{L_t^2 L_x^4} \bigl\||\nabla|^{2/3}u\bigr\|_{L_t^\infty L_x^{12/5}}\|u\|_{L_t^\infty L_x^4}\\
&\lesssim_u N^{3/2} + N^{5/3}  \sum_{M> N} M^{-5/3} \|\nabla u_M\bigr\|_{L_t^2 L_x^4}\\
&\lesssim_u N^{3/2}  + N^{5/3}  \sum_{M> N} M^{-1/6}\\
&\lesssim_u N^{3/2}.
\end{align*}
All spacetime norms in the estimates above are on $[0, \Tmax)\times\R^4$.

With \eqref{good energy} in place, we are now ready to finish the proof of Theorem~\ref{T:no cascade}.  First note that by Bernstein's inequality and \eqref{good energy}, $u\in L_t^\infty \dot H^{-1/4}_x([0, \Tmax)\times\R^4)$; indeed,
\begin{align*}
\bigl\||\nabla|^{-1/4} u\|_{L_t^\infty L_x^2}&\lesssim \bigl\||\nabla|^{-1/4} u_{> 1}\|_{L_t^\infty L_x^2} +\bigl\||\nabla|^{-1/4} u_{\leq 1}\|_{L_t^\infty L_x^2}\lesssim_u \sum_{N> 1} N^{-5/4} + \sum_{N\leq 1} N^{1/4}\lesssim_u1.
\end{align*}
Now fix $t\in [0, \Tmax)$ and let $\eta>0$ be a small constant.  By  Remark~\ref{R:small freq}, there exists $c(\eta)>0$ such that
$$
\int_{|\xi|\leq c(\eta)N(t)} |\xi|^2 |\hat u(t,\xi)|^2\, d\xi\leq \eta.
$$
Interpolating with $u\in L_t^\infty \dot H^{-1/4}_x$, we find
\begin{align}\label{close}
\int_{|\xi|\leq c(\eta)N(t)} |\hat u(t,\xi)|^2\, d\xi\lesssim_u \eta^{1/5}.
\end{align}
Meanwhile, by elementary considerations,
\begin{align}\label{far}
\int_{|\xi|\geq c(\eta)N(t)} |\hat u(t,\xi)|^2\, d\xi
\leq [c(\eta)N(t)]^{-2} \int_{\R^4} |\xi|^2 |\hat u(t,\xi)|^2\, d\xi \lesssim_u [c(\eta)N(t)]^{-2}.
\end{align}
Collecting \eqref{close} and \eqref{far} and using Plancherel's theorem, we obtain
$$
0\leq M(u(t)):=\int_{\R^4} |u(t,x)|^2\, dx \lesssim_u  \eta^{1/5} + c(\eta)^{-2} N(t)^{-2}
$$
for all $t\in [0, \Tmax)$.  Letting $\eta$ tend to zero and invoking \eqref{N blowup} and the conservation of mass, we conclude $M(u)=0$ and hence $u$ is identically zero.
This contradicts $\|u\|_{L_{t,x}^6( [0, \Tmax)\times\R^4)}=\infty$, thus settling Theorem~\ref{T:no cascade}.
\end{proof}

%%%%%%%%%%%%%%%%%%%%%%%%%%%%%%%%%%%%%%%%%%%%%%%%%%%%%%%%%%%%%%%%%%%%%%%%%%%%%%%%%%%%%%%%%%%
%
%
%                                   Section
%
%
%%%%%%%%%%%%%%%%%%%%%%%%%%%%%%%%%%%%%%%%%%%%%%%%%%%%%%%%%%%%%%%%%%%%%%%%%%%%%%%%%%%%%%%%%%%

\section{The quasi-soliton scenario}\label{S:no soliton}

In this section, we preclude the existence of almost periodic solutions as in Theorem~\ref{T:enemies} for which $\int_0^{\Tmax}N(t)^{-1}\,dt=\infty$.  We will show their existence is inconsistent with the interaction Morawetz inequality.

We start by recalling the interaction Morawetz inequality in four spatial dimensions; for details, see \cite{RV}.  For a solution $\phi:I\times\R^4\to \C$ to the equation $i\phi_{t}+\Delta \phi=\mathcal{N}$, we define the interaction Morawetz action
$$
M(t):=2\Im \int_{\R^4} \int_{\R^4} |\phi(t,y)|^{2}\frac{x-y}{|x-y|}\nabla\phi(t,x)\overline{\phi(t,x)}\, dx\, dy.
$$
Standard computations show
\begin{align*}
\partial_{t}M(t)& \geq 3\int_{\R^4} \int_{\R^4} \frac{|\phi(t,x)|^{2}|\phi(t,y)|^{2}}{|x-y|^{3}}\, dx\, dy + 4\Im \int_{\R^4} \int_{\R^4}\{\mathcal{N},\phi\}_{m}(t,y)\frac{x-y}{|x-y|}\nabla\phi(t,x)\overline{\phi(t,x)} \, dx\, dy \\
 &\quad+2\int_{\R^4} \int_{\R^4} |\phi(t,y)|^{2}\frac{x-y}{|x-y|} \{\mathcal{N},\phi\}_{p}(t,x)\,dx\,dy,
\end{align*}
where the mass bracket is given by $\{\mathcal N,\phi\}_m:=\Im(\mathcal N \bar{\phi})$ and the momentum bracket is given by $\{\mathcal N,\phi\}_p:=\Re(\mathcal N\nabla\overline{\phi}-\phi\nabla\overline{\mathcal N})$.
Thus, integrating with respect to time, we obtain
\begin{proposition}[Interaction Morawetz inequality] \label{P:im}
\begin{align*}
3&\int_I  \int_{\R^4} \int_{\R^4} \frac{|\phi(t,x)|^{2}|\phi(t,y)|^{2}}{|x-y|^{3}}\, dx\, dy\,dt+2\int_I\int_{\R^4} \int_{\R^4} |\phi(t,y)|^{2}\frac{x-y}{|x-y|} \{\mathcal{N},\phi\}_{p}(t,x)\,dx\,dy\,dt \\
&\leq 2\|\phi\|_{L_{t}^{\infty}L_{x}^{2}(I\times\R^{4})}^{3} \|\phi\|_{L_{t}^{\infty}\dot{H}_{x}^{1}(I\times\R^{4})} + 4\|\phi\|_{L_{t}^{\infty}L_{x}^{2}(I\times\R^{4})} \|\phi\|_{L_{t}^{\infty}\dot{H}_{x}^{1}(I\times\R^{4})} 
		\| \{\mathcal{N},\phi\}_{m}\|_{L_{t,x}^1(I\times\R^4)}.
\end{align*}
\end{proposition}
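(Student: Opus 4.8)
The plan is to read the proposition off the pointwise-in-time differential inequality for $\partial_t M(t)$ displayed just above: once that inequality is granted, all that remains is the fundamental theorem of calculus together with two applications of the Cauchy--Schwarz inequality and one of H\"older's inequality. Since the differential inequality is where the substance lies, I would first recall its provenance. One differentiates $M(t)$ in $t$, uses the equation $i\phi_t+\Delta\phi=\mathcal N$ to replace every $\phi_t$ by $i\Delta\phi-i\mathcal N$, and integrates by parts in the $x$ and $y$ variables; the purely kinetic part produces the manifestly nonnegative interaction term $3\int\!\!\int|\phi(t,x)|^2|\phi(t,y)|^2|x-y|^{-3}\,dx\,dy$ (the constant being $d-1=3$ in dimension $d=4$), while the terms carrying $\mathcal N$ organize into the mass-bracket and momentum-bracket contributions recorded above. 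Making this fully rigorous --- justifying the integrations by parts, controlling the kernel $|x-y|^{-3}$ near the diagonal $x=y$ and at spatial infinity, and reducing by approximation to smooth, rapidly decaying data --- is precisely the computation carried out in \cite{RV}, which I would invoke rather than reproduce.

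Granting the differential inequality, fix the interval $I$ (replacing it, if necessary, by an exhaustion by compact subintervals and using that $\|\phi\|_{L^\infty_t(L^2_x\cap\dot H^1_x)(I\times\R^4)}<\infty$), with endpoints $a<b$, and integrate over $I$. The fundamental theorem of calculus gives
\begin{align*}
&3\int_I\!\!\int_{\R^4}\!\!\int_{\R^4}\frac{|\phi(t,x)|^2|\phi(t,y)|^2}{|x-y|^3}\,dx\,dy\,dt
+2\int_I\!\!\int_{\R^4}\!\!\int_{\R^4}|\phi(t,y)|^2\tfrac{x-y}{|x-y|}\{\mathcal N,\phi\}_p(t,x)\,dx\,dy\,dt\\
&\qquad\leq M(b)-M(a)-4\int_I\Im\!\int_{\R^4}\!\!\int_{\R^4}\{\mathcal N,\phi\}_m(t,y)\tfrac{x-y}{|x-y|}\nabla\phi(t,x)\overline{\phi(t,x)}\,dx\,dy\,dt,
\end{align*}
so it remains only to estimate the two terms on the right. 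Because $(x-y)/|x-y|$ is a unit vector, Cauchy--Schwarz in $x$ gives $\int_{\R^4}|\nabla\phi(t,x)|\,|\phi(t,x)|\,dx\leq\|\nabla\phi(t)\|_{L^2_x}\|\phi(t)\|_{L^2_x}$, whence $|M(t)|\lesssim\|\phi(t)\|_{L^2_x}^3\|\nabla\phi(t)\|_{L^2_x}$ uniformly for $t\in I$; evaluated at $t=a$ and $t=b$ this controls $M(b)-M(a)$ by a constant multiple of $\|\phi\|_{L^\infty_tL^2_x}^3\|\phi\|_{L^\infty_t\dot H^1_x}$. For the remaining term, the same unit-vector bound together with Cauchy--Schwarz in $x$ and then H\"older in $t$ bounds it by $\|\{\mathcal N,\phi\}_m\|_{L^1_{t,x}(I\times\R^4)}\,\|\phi\|_{L^\infty_t\dot H^1_x}\,\|\phi\|_{L^\infty_tL^2_x}$. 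The momentum-bracket integral is kept on the left-hand side and not estimated at all. Collecting these bounds yields the asserted inequality.

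The only genuinely nontrivial ingredient in the whole argument is thus the interaction Morawetz monotonicity formula for $\partial_t M(t)$ recorded above; I expect everything after it --- the time integration, the two Cauchy--Schwarz estimates, and the H\"older step --- to be entirely routine. Accordingly, the main obstacle, such as it is, is the classical one of verifying the monotonicity formula rigorously in four dimensions, for which I would refer to \cite{RV}.
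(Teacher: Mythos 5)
Your proposal follows the same route as the paper: the paper likewise takes the displayed monotonicity formula for $\partial_t M(t)$ as given (deferring the computation to \cite{RV}), integrates in time, bounds the boundary terms $M(t)$ and the mass-bracket term by Cauchy--Schwarz and H\"older using the $L_t^\infty L_x^2$ and $L_t^\infty \dot H^1_x$ norms, and keeps the momentum bracket on the left. Your write-up is correct and, if anything, slightly more explicit about the routine steps than the paper itself.
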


We will apply Proposition \ref{P:im} with $\phi=u_{\geq N}$ and $\mathcal{N}=P_{\geq N}(|u|^{2}u)$ for $N$ small enough that the Littlewood--Paley projection captures most of the solution.  More precisely, we will prove

\begin{proposition}[Frequency-localized interaction Morawetz estimate] \label{P:flim}
Let $u:[0, \Tmax)\times\R^4\to \C$ be an almost periodic solution to \eqref{nls} such that $N(t)\equiv N_k \geq 1$ on each characteristic interval $J_k\subset [0, \Tmax)$.  Then for any $\eta>0$ there exists $N_0=N_0(\eta)$ such that for $N\leq N_0$ and any compact time interval $I\subset [0, \Tmax)$, which is a union of contiguous subintervals $J_k$, we have 
\begin{align*}
\int_I\int_{\R^{4}}\int_{\R^{4}}\frac{|u_{\geq N}(t,x)|^{2}|u_{\geq N}(t,y)|^{2}}{|x-y|^{3}}\,dx\,dy\,dt\lesssim_u \eta \Bigl[N^{-3}+\int_I N(t)^{-1}\, dt\Bigr].
\end{align*}
The implicit constant does not depend on the interval $I$.
\end{proposition}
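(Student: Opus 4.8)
The plan is to apply the interaction Morawetz inequality (Proposition~\ref{P:im}) with $\phi = u_{\geq N}$ and $\mathcal N = P_{\geq N}F(u)$, and to show that every term other than the leading momentum-bracket piece and the positive double-integral on the left can be absorbed or bounded by $\eta[N^{-3}+\int_I N(t)^{-1}dt]$. First I would record the a~priori bounds supplied by Theorem~\ref{T:LTS} and Corollary~\ref{C:LTS}: writing $K=\int_I N(t)^{-1}dt$, one has $\|\nabla u_{\leq N}\|_{L_t^2L_x^4}\lesssim_u 1+N^{3/2}K^{1/2}$, $\|u_{\geq N}\|_{L_t^q L_x^r}\lesssim_u N^{-1}(1+N^3K)^{1/q}$ for admissible $(q,r)$ with $q>3$, and—crucially for the small constant—$\|\nabla u_{\leq N}\|_{L_t^qL_x^r}\lesssim_u \eta(1+N^3K)^{1/q}$ once $N\le N_0(\eta)$. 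Since $\inf_t N(t)\ge 1$, Remark~\ref{R:small freq} also gives $\|\nabla u_{\leq N}\|_{L_t^\infty L_x^2}\le \eta$ for $N\le N_0$, so $\|\phi\|_{L_t^\infty\dot H^1_x}=\|\nabla u_{\geq N}\|_{L_t^\infty L_x^2}\lesssim_u 1$ and $\|\phi\|_{L_t^\infty L_x^2}=\|u_{\geq N}\|_{L_t^\infty L_x^2}\lesssim_u N^{-1}$.

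Next I would treat the right-hand side of Proposition~\ref{P:im}. The first term is $\lesssim_u \|\phi\|_{L_t^\infty L_x^2}^3\|\phi\|_{L_t^\infty\dot H^1_x}\lesssim_u N^{-3}$, which is of the claimed form once we carry a factor $\eta$—here I would note that replacing $N_0$ by a smaller value, or more honestly tracking that $\|u_{\geq N}\|_{L_t^\infty L_x^2}\lesssim_u \eta N^{-1}$ follows by interpolating the $\dot H^1$-smallness with an $\dot H^{1-}$ bound (as in Section~\ref{S:no cascade}), gives the $\eta$. The mass-bracket term requires estimating $\|\{\mathcal N,\phi\}_m\|_{L^1_{t,x}}=\|\Im(P_{\geq N}F(u)\,\overline{u_{\geq N}})\|_{L^1_{t,x}}$. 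Because $F(u)=F(u_{\geq N})+\O(u_{\leq N}u^2)$ and $\Im(F(u_{\geq N})\overline{u_{\geq N}})=0$ pointwise, only the commutator-type piece survives: $\{\mathcal N,\phi\}_m = \Im\big((P_{\geq N}\O(u_{\leq N}u^2))\overline{u_{\geq N}}\big)$, up to the error $\Im\big((P_{\geq N}F(u_{\geq N})-F(u_{\geq N}))\overline{u_{\geq N}}\big)=-\Im\big(P_{<N}F(u_{\geq N})\overline{u_{\geq N}}\big)$. Both are estimated by Hölder, Bernstein, and the low-frequency smallness: one low factor is either $u_{\leq N}$ or extracts a gain from the frequency-localization of $P_{<N}$ acting on a product of high-frequency factors (a standard commutator/paraproduct gain as in Lemma~\ref{L:neg deriv}), producing $\|\{\mathcal N,\phi\}_m\|_{L^1_{t,x}}\lesssim_u \eta N^{-2}(1+N^3K)^{?}$; multiplied by $\|\phi\|_{L_t^\infty L_x^2}\|\phi\|_{L_t^\infty\dot H^1_x}\lesssim_u N^{-1}$ this should land in $\eta[N^{-3}+K]$.

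The heart of the matter is the momentum-bracket term on the left, $2\int_I\!\int\!\int |u_{\geq N}(t,y)|^2\frac{x-y}{|x-y|}\{\mathcal N,u_{\geq N}\}_p(t,x)\,dx\,dy\,dt$. Writing $F(u)=F(u_{\geq N})+\mathcal E$ with $\mathcal E=\O(u_{\leq N}u^2)$, the contribution of $F(u_{\geq N})$ to the momentum bracket, combined with the defocusing sign, is (up to acceptable errors) itself nonnegative or can be integrated by parts into a form controlled by the positive term; this is the usual defocusing miracle. The genuinely new work is bounding the error contribution $\{P_{\geq N}\mathcal E, u_{\geq N}\}_p - (\text{terms already in }F(u_{\geq N}))$, i.e. terms of the schematic form $\int_I\!\int\!\int|u_{\geq N}(y)|^2\frac{x-y}{|x-y|}\big(\mathcal E\,\nabla\overline{u_{\geq N}} - u_{\geq N}\nabla\overline{P_{<N}F(u_{\geq N})} + \cdots\big)(x)$; after using $\big|\frac{x-y}{|x-y|}\big|\le 1$ and $\|\,|u_{\geq N}|^2\|_{L_t^\infty L_x^1}=\|u_{\geq N}\|_{L_t^\infty L_x^2}^2\lesssim_u N^{-2}$, these reduce to spacetime norms of the form $\|\mathcal E\|\,\|\nabla u_{\geq N}\|$ in dual Hölder pairs, each handled by the paraproduct estimate Lemma~\ref{L:neg deriv}, the fractional product rule Lemma~\ref{L:product rule}, Bernstein, and the low-frequency smallness from Corollary~\ref{C:LTS}. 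I expect the main obstacle to be exactly this bookkeeping: choosing the right Hölder exponents so that every factor carrying a $u_{\leq N}$ (or a commutator gain off $P_{<N}$) is paired against an $L_t^2L_x^4$-type norm where Corollary~\ref{C:LTS} supplies the factor $\eta$, while the remaining high-frequency factors cost at most $(1+N^3K)^{1/q}$ with the $q$'s summing correctly to reproduce $N^{-3}+K$. Once the arithmetic of exponents closes, choosing $\eta_0$ and then $N_0$ small as dictated by Theorem~\ref{T:LTS} and Corollary~\ref{C:LTS} yields the stated bound, uniformly in $I$.
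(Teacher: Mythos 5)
Your outline follows the same strategy as the paper: apply Proposition~\ref{P:im} with $\phi=u_{\geq N}$ and $\mathcal N=P_{\geq N}F(u)$, exploit the vanishing of $\{F(\uhi),\uhi\}_m$ in the mass bracket, extract the positive quartic interaction from the momentum bracket, and control everything else via Corollary~\ref{C:LTS}. But as a proof it has a genuine gap: the step you defer as ``bookkeeping'' is the entire content of the argument, and it does not close by the generic pairing you describe. The decisive obstruction is the restriction $q>3$ in \eqref{LTS:high}. Among the momentum-bracket errors one finds the cross term
\begin{equation*}
\int_I\int_{\R^4}\int_{\R^4}\frac{|\uhi(t,y)|^{2}\,|\uhi(t,x)|^{3}\,|\ulo(t,x)|}{|x-y|}\,dx\,dy\,dt ,
\end{equation*}
which contains five high-frequency factors and only one low one. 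A scaling count shows the product of norms must have total ``dimension'' $N^{-3}$, so after the Hardy--Littlewood--Sobolev step only one of the three factors $|\uhi(t,x)|^3|\ulo(t,x)|$ may be placed in a norm from \eqref{LTS:high}/\eqref{high mass}; the remaining two high factors must go into scale-invariant norms, and the only one available uniformly on long intervals is $L_t^\infty L_x^4$. Writing out the H\"older exponents, this forces an admissible pair for $\uhi$ with $q\leq 3$, exactly what \eqref{LTS:high} does not provide (and interpolating \eqref{LTS:high} with \eqref{high mass} stays on the admissible line with $q>3$, while $\|\nabla u\|_{L_t^2L_x^4(I)}$ is not $O_u(1)$ on long intervals). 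The paper never estimates this term directly: it splits pointwise into $|\ulo|\leq 10^{-6}|\uhi|$, absorbing that region into the positive term $\int\!\!\int\!\!\int|\uhi(y)|^2|\uhi(x)|^4|x-y|^{-1}$ generated by the main momentum-bracket piece in \eqref{e0}, and reduces the complementary region to the $j=1$ case of \eqref{e4}. Nothing in your plan produces this absorption, and without it (or a substitute idea) the exponent arithmetic you hope will ``close'' in fact does not.

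Two further points. First, your justification of $\|u_{\geq N}\|_{L_t^\infty L_x^2}\lesssim_u\eta N^{-1}$ by interpolation with a negative-regularity bound ``as in Section~\ref{S:no cascade}'' is not available here: the $\dot H^{-1/4}$ bound there is derived under the rapid-cascade hypothesis $\int_0^{\Tmax}N(t)^{-1}\,dt<\infty$, which fails in the quasi-soliton scenario where Proposition~\ref{P:flim} is used. The correct (elementary) route is Bernstein on an intermediate frequency band together with Remark~\ref{R:small freq} and $\inf_t N(t)\geq 1$, and the paper in fact needs the quantitatively stronger form \eqref{high mass} with the factor $\eta^6$ (yielding $\eta^{12}$, $\eta^{18}$ prefactors in \eqref{e1}--\eqref{e3}) precisely because several error terms, such as $\|\uhi\,\pl F(\uhi)\|_{L_{t,x}^1}$ and $\|\uhi\nabla\pl(u_{hi}^3)\|_{L_{t,x}^1}$, contain no low-frequency factor at all and come with no smallness of their own; your accounting does not identify where the final factor $\eta$ comes from for these all-high terms. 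Second, your split $F(u)=F(\uhi)+\mathcal E$ in the momentum bracket is a harmless variant of the paper's identity $\{F(u),u\}_p-\{F(\ulo),\ulo\}_p=-\tfrac12\nabla\bigl[|u|^4-|\ulo|^4\bigr]$ and leads to the same error families, so that part of the plan is fine in principle; the proof, however, still remains to be done.
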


\begin{proof}
Fix a compact interval $I\subset [0, \Tmax)$, which is a union of contiguous subintervals $J_k$, and let $K:=\int_I N(t)^{-1}\, dt$.  Throughout the proof, all spacetime norms will be on $I\times\R^4$.

Fix $\eta>0$ and let $N_0=N_0(\eta)$ be small enough that claim \eqref{LTS:low} of Corollary~\ref{C:LTS} holds; more precisely, for all $N\leq N_0$,
\begin{align}\label{LTS: low'}
\|\nabla u_{\leq N}\|_{L_t^q L_x^r } \lesssim_u \eta(1+N^3K)^{1/q} \quad \text{for all} \quad \tfrac1q+\tfrac2r=1 \quad \text{with} \quad 2\leq q\leq \infty.
\end{align}
Choosing $N_0$ even smaller if necessary, we can also guarantee that
\begin{align}\label{high mass}
\|u_{\geq N}\|_{L_t^\infty L_x^2}\lesssim_u \eta^6 N^{-1} \quad \text{for all}\quad N\leq N_0.
\end{align}

Now fix $N\leq N_0$ and write $\ulo:=u_{\leq N}$ and $\uhi:=u_{> N}$.  With this notation, \eqref{LTS: low'} becomes
\begin{align}\label{LTS:lo}
\|\nabla \ulo \|_{L_t^q L_x^r } \lesssim_u \eta (1+N^3K)^{1/q} \quad \text{for all} \quad \tfrac1q+\tfrac2r=1 \quad \text{with} \quad2\leq q\leq \infty.
\end{align}
We will also need claim \eqref{LTS:high} of Corollary~\ref{C:LTS}, which reads
\begin{align}\label{LTS:hi}
\| \uhi \|_{L_t^q L_x^r } \lesssim_u N^{-1}(1+N^3K)^{1/q} \quad \text{for all} \quad \tfrac1q+\tfrac2r=1\quad \text{with} \quad 3< q\leq \infty.
\end{align}
Note that by \eqref{high mass}, the endpoint $q=\infty$ of the inequality above is strengthened to
\begin{align}\label{hi mass}
\|\uhi\|_{L_t^\infty L_x^2}\lesssim_u \eta^6 N^{-1}.
\end{align}

To continue, we apply Proposition~\ref{P:im} with $\phi=\uhi$ and $\mathcal{N}=\ph F(u)$ and use \eqref{hi mass}:
\begin{align}\label{to estimate}
\int_I\int_{\R^{4}}\int_{\R^{4}}&\frac{|\uhi(t,x)|^{2}|\uhi(t,y)|^{2}}{|x-y|^{3}}\,dx\,dy\,dt \notag\\
& + \int_I\int_{\R^4} \int_{\R^4} |\uhi(t,y)|^{2}\frac{x-y}{|x-y|} \{\ph F(u),\uhi\}_{p}(t,x)\,dx\,dy\,dt \\
&\qquad \qquad \qquad \lesssim_u \eta^{18}N^{-3} + \eta^6 N^{-1} \| \{\ph F(u),\uhi\}_{m}\|_{L_{t,x}^1(I\times\R^4)}. \notag
\end{align}
We first consider the contribution of the momentum bracket term.  We write
\begin{align*}
\{\ph F(u),\uhi\}_{p}
&=\{F(u),u\}_p- \{F(\ulo),\ulo\}_p - \{F(u)-F(\ulo), \ulo\}_p - \{\pl F(u), \uhi\}_p\\
&=-\tfrac12\nabla[ |u|^4-|\ulo|^4]  - \{F(u)-F(\ulo), \ulo\}_p - \{\pl F(u), \uhi\}_p\\
&=:I+II+III.
\end{align*}
After an integration by parts, the term $I$ contributes to the left-hand side of  \eqref{to estimate} a multiple of
\begin{align*}
\int_I\int_{\R^4} \int_{\R^4} &\frac{|u_{hi}(t,y)|^{2}|u_{hi}(t,x)|^{4}}{|x-y|}\,dx\,dy\,dt +\sum_{j=1}^{3}\Bigl|\int_I\int_{\R^4} \int_{\R^4} \frac{|\uhi(t,y)|^{2}\O(u_{hi}^ju_{lo}^{4-j})(t,x)}{|x-y|}\,dx\,dy\,dt\Bigr|,
\end{align*}
In order to estimate the contribution of $II$ to \eqref{to estimate}, we use $\{f,g\}_{p}=\nabla \O(fg)+\O(f\nabla g)$ to write
$$
 \{F(u)-F(\ulo), \ulo\}_p=\sum_{j=1}^{3} \nabla\O(u_{hi}^{j}u_{lo}^{4-j}) +\sum_{j=1}^{3} \O(u_{hi}^ju_{lo}^{3-j}\nabla \ulo).
$$
Integrating by parts for the first term and bringing absolute values inside the integrals for the second term, we find that $II$ contributes to the right-hand side of \eqref{to estimate} a multiple of
\begin{align*}
\sum_{j=1}^{3}\int_I\int_{\R^4} \int_{\R^4} & \frac{|\uhi(t,y)|^{2}|\uhi(t,x)|^j|\ulo(t,x)|^{4-j}}{|x-y|}\,dx\,dy\,dt \\
&+ \sum_{j=1}^{3}\int_I\int_{\R^4} \int_{\R^4} |\uhi(t,y)|^{2}|\uhi(t,x)|^j|\nabla \ulo(t,x)||\ulo(t,x)|^{3-j}\,dx\,dy\,dt.
\end{align*}
Finally, integrating by parts when the derivative (from the definition of the momentum bracket) falls on $\uhi$, we estimate the contribution of $III$ to the right-hand side of \eqref{to estimate} by a multiple of
\begin{align*}
\int_I \int_{\R^4} \int_{\R^4} &\frac{|\uhi(t,y)|^{2}|\uhi(t,x)||\pl F(u(t,x))|}{|x-y|}\,dx\,dy\,dt \\
&+ \int_I\int_{\R^4} \int_{\R^4} |\uhi(t,y)|^{2}|\uhi(t,x)| |\nabla \pl F(u(t,x))|\,dx\,dy\,dt.
\end{align*}

Consider now the mass bracket appearing in \eqref{to estimate}.  Exploiting cancellation, we write
\begin{align*}
\{ & \ph  F(u), \uhi\}_m\\
&= \{\ph F(u) - F(\uhi), \uhi\}_m \\
&=  \{\ph \bigl[ F(u) - F(\uhi) - F(\ulo)\bigr] , \uhi\}_m + \{\ph F(\ulo), \uhi\}_m  - \{\pl F(\uhi) , \uhi\}_m\\
&=\O(u_{hi}^3\ulo) + \O(u_{hi}^2u_{lo}^2) + \{\ph F(\ulo), \uhi\}_m  - \{\pl F(\uhi) , \uhi\}_m.
\end{align*}

Putting everything together and using \eqref{hi mass}, \eqref{to estimate} becomes
\begin{align}
\int_I& \int_{\R^{4}}\int_{\R^{4}}\frac{|\uhi(t,x)|^{2}|\uhi(t,y)|^{2}}{|x-y|^{3}}\,dx\,dy\,dt +\int_I\int_{\R^{4}}\int_{\R^{4}}\frac{|\uhi(t,x)|^{2}|\uhi(t,y)|^4}{|x-y|}\,dx\,dy\,dt\label{e0}\\
&\lesssim_u \eta^{18}N^{-3} \label{e1}\\
&\quad +\eta^6 N^{-1}\bigl\{ \|u_{hi}^3\ulo\|_{L_{t,x}^1} +   \|u_{hi}^2u_{lo}^2\|_{L_{t,x}^1} +  \|\uhi\ph F(\ulo)\|_{L_{t,x}^1} + \| \uhi \pl F(\uhi)\|_{L_{t,x}^1} \bigr\} \label{e2}\\
&\quad + \eta^{12} N^{-2} \sum_{j=1}^{3} \|u_{hi}^j u_{lo}^{3-j}\nabla \ulo\|_{L_{t,x}^1} + \eta^{12} N^{-2}\|\uhi \nabla \pl F(u)\|_{L_{t,x}^1}\label{e3}\\
&\quad + \sum_{j=1}^{3} \int_I\int_{\R^4} \int_{\R^4}  \frac{|\uhi(t,y)|^{2}|\uhi(t,x)|^j|\ulo(t,x)|^{4-j}}{|x-y|}\,dx\,dy\,dt \label{e4}\\
&\quad + \int_I \int_{\R^4} \int_{\R^4} \frac{|\uhi(t,y)|^{2}|\uhi(t,x)||\pl F(u(t,x))|}{|x-y|}\,dx\,dy\,dt.\label{e5}
\end{align}
Thus, to complete the proof of Proposition~\ref{P:flim} we have to show that the error terms \eqref{e2} through \eqref{e5} are acceptable; clearly, \eqref{e1} is acceptable.

Consider now error term \eqref{e2}.  Using \eqref{LTS:lo}, \eqref{LTS:hi}, and Sobolev embedding, we estimate
\begin{align*}
\|u_{hi}^3\ulo\|_{L_{t,x}^1}&\lesssim \|\uhi\|_{L_t^\infty L_x^4} \|\uhi\|_{L_t^{7/2}L_x^{14/5}}^2\|\ulo\|_{L_t^{7/3}L_x^{28}}\lesssim_u\eta N^{-2}(1+N^3K)\\
\|u_{hi}^2u_{lo}^2\|_{L_{t,x}^1}&\lesssim \|\uhi\|_{L_t^4 L_x^{8/3}}^2 \|\ulo\|_{L_t^4L_x^8}^2\lesssim_u\eta^2 N^{-2}(1+N^3K).
\end{align*}
Using Bernstein's inequality as well, we estimate
\begin{align*}
\|\uhi \ph F(\ulo)\|_{L_{t,x}^1}
&\lesssim\|\uhi\|_{L_t^4 L_x^{8/3}} N^{-1} \|\nabla F(\ulo)\|_{L_t^{4/3}L_x^{8/5}}\\
&\lesssim_u N^{-2} (1+N^3K)^{1/4} \|\nabla \ulo\|_{L_t^2L_x^4} \|\ulo\|_{L_t^8L_x^{16/3}}^2\\
&\lesssim_u\eta^3 N^{-2}(1+N^3K).
\end{align*}
Finally, by H\"older, Bernstein, Sobolev embedding, \eqref{LTS:lo} and \eqref{LTS:hi},
\begin{align*}
\| \uhi\pl F(\uhi)  \|_{L_{t,x}^1} 
&\lesssim \|\uhi\|_{L_t^{10/3} L_x^{20/7}} N^{7/5} \|F(\uhi)\|_{L_t^{10/7} L_x^1}\\
&\lesssim_u N^{2/5}(1+N^3K)^{3/10} \|\uhi\|_{L_t^{10/3} L_x^{20/7}}^{7/3} \|\uhi\|_{L_t^\infty L_x^{40/11}}^{2/3}\\
&\lesssim_u N^{2/5 - 7/3}(1+N^3K) \| |\nabla|^{9/10} \uhi\|_{L_t^\infty L_x^2}^{2/3}\\
&\lesssim_u N^{-2}(1+N^3K).
\end{align*}
Collecting the estimates above we find
$$
\eqref{e2}\lesssim_u \eta^6 N^{-3}(1+N^3K)\lesssim_u \eta(N^{-3}+K),
$$
and thus this error term is acceptable.

Consider next error term \eqref{e3}.  By \eqref{LTS:lo}, \eqref{LTS:hi}, \eqref{hi mass}, Sobolev embedding, and Bernstein,
\begin{align*}
\|u_{hi}u_{lo}^2\nabla \ulo\|_{L_{t,x}^1}&\lesssim \|\nabla \ulo\|_{L_t^2L_x^4} \|\uhi\|_{L_t^\infty L_x^2} \|\ulo\|_{L_t^4L_x^8}^2 \lesssim_u\eta^9 N^{-1}(1+N^3K)\\
\|u_{hi}^2u_{lo}\nabla \ulo\|_{L_{t,x}^1}&\lesssim \|\nabla \ulo\|_{L_t^2L_x^4} \|\uhi\|_{L_t^4 L_x^{8/3}}^2 \|\ulo\|_{L_{t,x}^\infty}\lesssim_u\eta^2 N^{-1}(1+N^3K)\\
\|u_{hi}^3\nabla \ulo\|_{L_{t,x}^1}&\lesssim \|\nabla \ulo\|_{L_t^{7/3}L_x^{28}} \|\uhi\|_{L_t^{7/2}L_x^{14/5}}^2 \|\uhi\|_{L_t^\infty L_x^4} \lesssim_u\eta N^{-1}(1+N^3K).
\end{align*}
To estimate the second term in \eqref{e3}, we write $F(u)=F(\ulo) + \O(\uhi u_{lo}^2 + u_{hi}^2\ulo +u_{hi}^3)$.  Arguing as above, we obtain
\begin{align*}
\|\uhi \nabla \pl F(\ulo)\|_{L_{t,x}^1}&\lesssim \|\uhi\|_{L_t^\infty L_x^2}\|\nabla \ulo\|_{L_t^2L_x^4} \|\ulo\|_{L_t^4L_x^8}^2 \lesssim_u\eta^9 N^{-1}(1+N^3K)\\
\| \uhi \nabla \pl \O(\uhi u_{lo}^2) \|_{L_{t,x}^1}&\lesssim  N \|\uhi\|_{L_t^4 L_x^{8/3}}^2 \|\ulo\|_{L_t^4L_x^8}^2\lesssim_u\eta^2 N^{-1}(1+N^3K)\\
\|\uhi \nabla \pl \O( u_{hi}^2\ulo) \|_{L_{t,x}^1}&\lesssim N \|\uhi\|_{L_t^\infty L_x^4} \|\uhi\|_{L_t^{7/2}L_x^{14/5}}^2\|\ulo\|_{L_t^{7/3}L_x^{28}}\lesssim_u\eta N^{-1}(1+N^3K)\\
\|\uhi \nabla \pl \O( u_{hi}^3) \|_{L_{t,x}^1}&\lesssim  \|\uhi\|_{L_t^{10/3} L_x^{20/7}} N^{12/5} \|u_{hi}^3\|_{L_t^{10/7} L_x^1}\lesssim N^{12/5}  \|\uhi\|_{L_t^{10/3} L_x^{20/7}}^{10/3} \|\uhi\|_{L_t^\infty L_x^{40/11}}^{2/3}\\
&\lesssim_u N^{-1}(1+N^3K).
\end{align*}
Putting everything together, we find
$$
\eqref{e3}\lesssim_u \eta^{12} N^{-3}(1+N^3K)\lesssim_u \eta(N^{-3}+K),
$$
and thus this error term is also acceptable.

We now turn to error term \eqref{e4}.  By trivial considerations, we only have to consider the cases $j=1$ and $j=3$.  We start with the case $j=1$; using H\"older together with the Hardy--Littlewood--Sobolev inequality,
Sobolev embedding, \eqref{LTS:lo}, \eqref{LTS:hi}, and \eqref{hi mass}, we estimate
\begin{align*}
 \int_I\int_{\R^4} \int_{\R^4}  \frac{|\uhi(t,y)|^{2}|\uhi(t,x)||\ulo(t,x)|^3}{|x-y|}\,dx\,dy\,dt
&\lesssim \|\uhi\|_{L_t^{12}L_x^{24/11}}^2 \Bigl\| \tfrac1{|x|}* \bigl(|\uhi||\ulo|^3\bigr)\Bigr\|_{L_t^{6/5}L_x^{12}}\\
&\lesssim_u N^{-2} (1+N^3K)^{1/6}\|\uhi u_{lo}^3\|_{L_{t,x}^{6/5}}\\
&\lesssim_u N^{-2} (1+N^3K)^{1/6} \|\uhi\|_{L_t^\infty L_x^2} \|\ulo\|_{L_t^{18/5} L_x^9}^3\\
&\lesssim_u \eta^9N^{-3} (1+N^3K).
\end{align*}
Finally, to estimate the error term corresponding to $j=3$, we consider two scenarios:  If $|\ulo|\leq 10^{-6}|\uhi|$, we absorb this contribution into the term
$$
\int_I\int_{\R^{4}}\int_{\R^{4}}\frac{|\uhi(t,x)|^{2}|\uhi(t,y)|^4}{|x-y|}\,dx\,dy\,dt,
$$
which appears in \eqref{e0}.  If instead $|\uhi|\leq 10^6|\ulo|$, we may estimate the contribution of this term by that of the error term corresponding to $j=1$.  Thus,
$$
\eqref{e4}\lesssim_u \eta^9 N^{-3}(1+N^3K)\lesssim_u \eta(N^{-3}+K),
$$
which renders this error term acceptable.

We are left to consider error term \eqref{e5}.  Arguing as for the case $j=1$ of the error term \eqref{e4}, we derive 
\begin{align*}
\int_I \int_{\R^4} \int_{\R^4}& \frac{|\uhi(t,y)|^{2}|\uhi(t,x)||\pl F(u(t,x))|}{|x-y|}\,dx\,dy\,dt\\
&\qquad \lesssim \|\uhi\|_{L_t^{12}L_x^{24/11}}^2 \Bigl\| \tfrac1{|x|}* \bigl(|\uhi||\pl F(u)|\bigr)\Bigr\|_{L_t^{6/5}L_x^{12}}\\
&\qquad\lesssim_u N^{-2} (1+N^3K)^{1/6}\|\uhi \pl F(u)\|_{L_{t,x}^{6/5}}\\
&\qquad\lesssim_u N^{-2} (1+N^3K)^{1/6} \|\uhi\|_{L_t^4L_x^{8/3}} \|\pl F(u)\|_{L_t^{12/7} L_x^{24/11}}\\
&\qquad\lesssim_u N^{-3} (1+N^3K)^{5/12}\|\pl F(u)\|_{L_t^{12/7} L_x^{24/11}}.
\end{align*}
We now write $F(u)=F(\uhi) + \O(u_{lo}^3 + u_{lo}^2\uhi +\ulo u_{hi}^2)$.  Using H\"older, Bernstein, Sobolev embedding, \eqref{LTS:lo}, \eqref{LTS:hi}, and \eqref{hi mass}, we estimate
\begin{align*}
\|\pl \O(u_{lo}^3)\|_{L_t^{12/7} L_x^{24/11}}&\lesssim \|\ulo\|_{L_t^{12} L_x^{24/5}}\|\ulo\|_{L_t^4L_x^8}^2\lesssim_u\eta^3 (1+N^3K)^{7/12}\\
\|\pl \O(u_{lo}^2\uhi)\|_{L_t^{12/7} L_x^{24/11}}&\lesssim N \|u_{lo}^2\uhi\|_{L_t^{12/7} L_x^{24/17}}\lesssim N \|\ulo\|_{L_t^4L_x^8}^2 \|\uhi\|_{L_t^{12} L_x^{24/11}}\lesssim_u\eta^2 (1+N^3K)^{\frac7{12}}\\
\|\pl \O(\ulo u_{hi}^2)\|_{L_t^{12/7} L_x^{24/11}}&\lesssim N \|\ulo u_{hi}^2\|_{L_t^{12/7} L_x^{24/17}}\lesssim N\|\ulo\|_{L_t^3L_x^{12}}\|\uhi\|_{L_t^4 L_x^{8/3}} \|\uhi\|_{L_t^\infty L_x^4}\\
		&\lesssim_u \eta (1+N^3K)^{7/12}.
\end{align*}
Finally, using Bernstein, H\"older, interpolation, \eqref{LTS:lo}, \eqref{LTS:hi}, and \eqref{hi mass}, we get
\begin{align*}
\|\pl F(\uhi)\|_{L_t^{12/7} L_x^{24/11}}
&\lesssim N^{13/6}\| F(\uhi)\|_{L_t^{12/7} L_x^1}\lesssim N^{13/6} \|\uhi\|_{L_t^{24/7} L_x^{48/17}}^2 \|\uhi\|_{L_t^\infty L_x^{24/7}}\\
&\lesssim_u N^{1/6}(1+N^3K)^{7/12} \| |\nabla|^{5/6}\uhi\|_{L_t^\infty L_x^2}\\
&\lesssim_u \eta (1+N^3K)^{7/12}.
\end{align*}
Collecting these estimates, we find
$$
\eqref{e5}\lesssim_u \eta N^{-3}(1+N^3K)\lesssim_u \eta(N^{-3}+K),
$$
and thus this error term is also acceptable.

This concludes the proof of Proposition~\ref{P:flim}.
\end{proof}

With Proposition~\ref{P:flim} in place, we are now ready to preclude the second (and last) scenario of Theorem~\ref{T:enemies} and thus complete the proof of Theorem~\ref{T:main}.

\begin{theorem}[No quasi-solitons]\label{T:no soliton}
There are no almost periodic solutions $u:[0, \Tmax)\times\R^4\to \C$ to \eqref{nls} with $N(t)\equiv N_k \geq 1$ on each characteristic interval $J_k\subset [0, \Tmax)$
which satisfy $\|u\|_{L^6_{t,x}( [0, \Tmax) \times \R^4)} =+\infty$ and
\begin{align}\label{infinite int}
\int_0^{\Tmax}N(t)^{-1}\,dt=\infty.
\end{align}
\end{theorem}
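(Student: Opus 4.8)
The plan is to argue by contradiction, following the standard scheme for precluding quasi-solitons. Suppose such a solution $u$ exists. The key tool is the frequency-localized interaction Morawetz estimate of Proposition~\ref{P:flim}, whose left-hand side must be compared against a lower bound coming from almost periodicity. First I would fix $\eta>0$ and let $N_0=N_0(\eta)$ be as in Proposition~\ref{P:flim}, then fix $N\leq N_0$ (chosen even smaller if necessary so that $\uhi=u_{>N}$ captures most of the $\dot H^1_x$ norm of $u$ at every time, using Remark~\ref{R:small freq} together with the hypothesis $N(t)\geq 1$). The upshot of Proposition~\ref{P:flim} is then
\begin{align*}
\int_I\int_{\R^{4}}\int_{\R^{4}}\frac{|\uhi(t,x)|^{2}|\uhi(t,y)|^{2}}{|x-y|^{3}}\,dx\,dy\,dt\lesssim_u \eta\bigl[N^{-3}+K\bigr], \qquad K:=\int_I N(t)^{-1}\,dt,
\end{align*}
uniformly for all compact $I\subset[0,\Tmax)$ that are unions of contiguous $J_k$'s.

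Next I would establish the matching lower bound. The standard move is to invoke almost periodicity: there is a fixed $c>0$ (depending on $u$ and on the fixed choice of $\eta$, hence ultimately harmless) such that at each time $t$ the bulk of $|\uhi(t)|$ is concentrated in a ball of radius $\sim N(t)^{-1}$ about $x(t)$, and moreover $\|\uhi(t)\|_{L^2_x}\gtrsim_u N(t)^{-1}$ (the lower bound on the mass at frequency $\gtrsim N$, again from almost periodicity and $N(t)\geq 1$, for $N$ small enough). Restricting the triple integral to $|x-x(t)|,|y-x(t)|\lesssim N(t)^{-1}$, on which $|x-y|^{-3}\gtrsim N(t)^3$, gives
\begin{align*}
\int_I\int_{\R^{4}}\int_{\R^{4}}\frac{|\uhi(t,x)|^{2}|\uhi(t,y)|^{2}}{|x-y|^{3}}\,dx\,dy\,dt\gtrsim_u \int_I N(t)^3\,\|\uhi(t)\|_{L^2(\text{ball})}^4\,dt\gtrsim_u \int_I N(t)^{-1}\,dt=K.
\end{align*}
Combining the two displays yields $K\lesssim_u \eta(N^{-3}+K)$, so choosing $\eta=\eta(u)$ small (independently of $I$, which is the crucial feature emphasized in Proposition~\ref{P:flim} and Corollary~\ref{C:LTS}) we may absorb the $\eta K$ term and conclude $K\lesssim_u N^{-3}$ with a constant independent of $I$.

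Finally, letting $I$ exhaust $[0,\Tmax)$ through a nested sequence $I_n$ of unions of contiguous characteristic intervals, the bound $\int_{I_n}N(t)^{-1}\,dt\lesssim_u N^{-3}$ passes to the limit to give $\int_0^{\Tmax}N(t)^{-1}\,dt\lesssim_u N^{-3}<\infty$, directly contradicting hypothesis \eqref{infinite int}. I expect the main obstacle to be the lower bound step: one must be careful that the concentration radius $c(\eta)/N(t)$ and the lower mass bound at frequency $\gtrsim N$ are genuinely uniform in $t$ (which is exactly what Definition~\ref{D:ap} and Remark~\ref{R:small freq} supply, given $N(t)\geq 1$), and that the constant $c$ — though depending on the now-fixed $\eta$ and on $u$ — does not depend on the interval $I$; all the $I$-uniformity on the upper side is already guaranteed by Proposition~\ref{P:flim}. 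The rest is a clean absorption argument.
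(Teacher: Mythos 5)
Your overall skeleton is exactly the paper's: apply the frequency-localized interaction Morawetz estimate (Proposition~\ref{P:flim}) for an upper bound, prove a concentration lower bound of the form $\int_I N(t)^{-1}\,dt$ for the same quadruple integral, absorb the $\eta K$ term by choosing $\eta=\eta(u)$ small (uniformly in $I$), and then exhaust $[0,\Tmax)$ to contradict \eqref{infinite int}. However, the step you yourself flag as the main obstacle contains a genuine gap as you have written it. You justify $\|\uhi(t)\|_{L^2(|x-x(t)|\lesssim N(t)^{-1})}\gtrsim_u N(t)^{-1}$ by asserting that ``the bulk of $|\uhi(t)|$ is concentrated in a ball of radius $\sim N(t)^{-1}$ about $x(t)$'' and attributing this to Definition~\ref{D:ap} and Remark~\ref{R:small freq}. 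Those statements control the $\dot H^1_x$ (gradient) distribution of $u$, not the $L^2_x$ mass of $\uhi=u_{>N}$; and the $L^2$ concentration claim for $\uhi$ is in fact not available: the mass of $\uhi$ outside the ball can a priori be as large as order $N^{-1}$ (e.g.\ it inherits contributions of size $\|u_{\leq N'}-u_{\leq N}\|_{L^2_x}\sim N^{-1}$ spread over a huge region), which dwarfs the needed scale $N(t)^{-1}$ when $N(t)$ is large, since $N$ is a fixed small frequency while $N(t)\geq 1$ is unbounded. So the deduction ``bulk in the ball'' $+$ ``$\|\uhi(t)\|_{L^2_x}\gtrsim_u N(t)^{-1}$'' does not go through from the cited facts.

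The missing ideas, which are how the paper closes this step, are two. First, one proves a uniform-in-time lower bound for the \emph{full} solution, $N(t)^2\int_{|x-x(t)|\leq C(u)/N(t)}|u(t,x)|^2\,dx\gtrsim_u 1$; the uniformity in $t$ is not a direct consequence of Definition~\ref{D:ap} but comes from a compactness argument: the left-hand side is scale invariant and continuous as a functional on $L^4_x$ (hence on $\dot H^1_x$), and the rescaled orbit is precompact in $\dot H^1_x$ and bounded away from zero. Second, one removes the low frequencies \emph{on the ball} using the scale-invariant $L^4_x$ smallness $\|u_{\leq N}\|_{L^\infty_t L^4_x}\leq\eta$ (valid for $N\leq N_0(\eta)$ since $N(t)\geq 1$): by H\"older, $\int_{|x-x(t)|\leq C(u)/N(t)}|u_{\leq N}|^2\,dx\lesssim \eta^2 C(u)^2N(t)^{-2}$, which is acceptably small relative to the first bound after shrinking $\eta$. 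This yields $N(t)^2\int_{|x-x(t)|\leq C(u)/N(t)}|u_{\geq N}(t,x)|^2\,dx\gtrsim_u 1$ uniformly in $t$, which is the inequality your lower bound actually requires; with it in hand, the rest of your argument (restricting to $|x-y|\leq 2C(u)/N(t)$, absorbing $\eta K$, and letting $I$ exhaust $[0,\Tmax)$) is correct and identical to the paper's conclusion.
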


\begin{proof}
We argue by contradiction.  Assume there exists such a solution $u$.

Let $\eta>0$ be a small parameter to be chosen later.  By Proposition~\ref{P:flim}, there exists $N_0=N_0(\eta)$ such that for all $N\leq N_0$ and any compact time interval $I\subset [0, \Tmax)$, which is a union of contiguous subintervals $J_k$, we have 
\begin{align}\label{flim to use}
\int_I\int_{\R^{4}}\int_{\R^{4}}\frac{|u_{\geq N}(t,x)|^{2}|u_{\geq N}(t,y)|^{2}}{|x-y|^{3}}\,dx\,dy\,dt\lesssim_u \eta \Bigl[N^{-3}+\int_I N(t)^{-1}\, dt\Bigr].
\end{align}
As $\inf_{t\in [0, \Tmax)}N(t)\geq 1$, choosing $N_0$ even smaller if necessary (depending on $\eta$) we can also ensure that
\begin{align}\label{E:small 1}
\|u_{\leq N} \|_{L_t^\infty L_x^4 ([0, \Tmax)\times\R^4)} + \|\nabla u_{\leq N}\|_{L_t^\infty L_x^2 ([0, \Tmax)\times\R^4)}\leq \eta \quad \text{for all} \quad N\leq N_0.
\end{align}

Next we claim that there exists $C(u)>0$ such that
\begin{align}\label{concentration}
N(t)^2 \int_{|x-x(t)|\leq C(u)/N(t)} |u(t,x)|^2\, dx \gtrsim_u 1/C(u)
\end{align}
uniformly for $t\in [0, \Tmax)$.  That this is true for a single time $t$ follows from the fact that $u(t)$ is not identically zero.  To upgrade this to a statement uniform in time, we use the fact that $u$ is almost periodic.  More precisely, we note that the left-hand side of \eqref{concentration} is scale invariant and that the map $u(t)\mapsto \text{LHS} \eqref{concentration}$ is continuous on $L_x^4$ and hence also on $\dot H^1_x$.  

Next, using H\"older's inequality and \eqref{E:small 1}, we find
\begin{align*}
\int_{|x-x(t)|\leq C(u)/N(t)} |u_{\leq N}(t,x)|^2\, dx \lesssim \Bigl\{ \frac{C(u)}{N(t)} \|u_{\leq N}\|_{L_t^\infty L_x^4 ([0, \Tmax)\times\R^4)} \Bigr\}^2\lesssim_u \eta^2C(u)^2 N(t)^{-2}
\end{align*}
for all $t\in [0, \Tmax)$ and all $N\leq N_0$.  Combining this with \eqref{concentration} and choosing $\eta$ sufficiently small depending on $u$, we find
\begin{align*}
\inf_{t\in [0, \Tmax)} N(t)^2 \int_{|x-x(t)|\leq C(u)/N(t)} |u_{\geq N} (t,x)|^2\, dx \gtrsim_u 1  \quad \text{for all} \quad N\leq N_0.
\end{align*}
Thus, on any compact time interval $I\subset [0, \Tmax)$ and for any $N\leq N_0$ we have
\begin{align*}
\int_I\int_{\R^{4}}& \int_{\R^{4}}\frac{|u_{\geq N}(t,x)|^{2}|u_{\geq N}(t,y)|^{2}}{|x-y|^{3}}\,dx\,dy\,dt\\
&\geq \int_I \iint_{|x-y|\leq \frac{2C(u)}{N(t)}} \Bigl[\frac{N(t)}{2C(u)}\Bigr]^{3}|u_{\geq N}(t,x)|^{2}|u_{\geq N}(t,y)|^{2}\,dx\,dy\,dt\\
&\geq \int_I\Bigl[\frac{N(t)}{2C(u)}\Bigr]^{3}  \int_{|x-x(t)|\leq \frac{C(u)}{N(t)}} |u_{\geq N}(t,x)|^{2} \, dx \int_{|y-x(t)|\leq \frac{C(u)}{N(t)}} |u_{\geq N}(t,y)|^{2}\,dy\,dt\\
&\gtrsim_u  \int_I N(t)^{-1}\, dt.
\end{align*}
Invoking \eqref{flim to use} and choosing $\eta$ small depending on $u$, we find
$$
\int_I N(t)^{-1}\, dt \lesssim_u N^{-3} \qquad \text{for all }  I\subset [0, \Tmax) \text{ and all } N\leq N_0.
$$ 
Recalling the hypothesis \eqref{infinite int}, we derive a contradiction by choosing the interval $I$ sufficiently large inside $[0, \Tmax)$.  

This completes the proof of the theorem.
\end{proof}

%%%%%%%%%%%%%%%%%%%%%%%%%%%%%%%%%%%%%%%%%%%%%%%%%%%%%%%%%%%%%%%%%%%%%%%%%%%%%%%%%%%%%%%%%%%
%
%
%                                   Section
%
%
%%%%%%%%%%%%%%%%%%%%%%%%%%%%%%%%%%%%%%%%%%%%%%%%%%%%%%%%%%%%%%%%%%%%%%%%%%%%%%%%%%%%%%%%%%%


\begin{thebibliography}{10}
\newcommand{\msn}[1]{\href{http://www.ams.org/mathscinet-getitem?mr=#1}{\sc MR#1}}

\bibitem{borg bil}
J. Bourgain, \emph{Refinements of Strichartz' inequality and applications to $2$D-NLS with critical nonlinearity.} Internat. Math. Res. Notices \textbf{5} (1998), 253--283.
\msn{1616917}

\bibitem{borg:scatter}
J. Bourgain, \emph{Global wellposedness of defocusing critical nonlinear Schr\"odinger equation in the radial case.} J. Amer.
Math. Soc. \textbf{12} (1999), 145--171. \msn{1626257}

\bibitem{ChristWeinstein}
M.~Christ and M.~Weinstein, \textit{Dispersion of small amplitude solutions of the generalized Korteweg--de Vries equation.} J.
Funct. Anal. \textbf{100} (1991), 87--109. \msn{1124294}

\bibitem{coifmey:1}
R. R. Coifman and Y. Meyer, \emph{On commutators of singular integrals and bilinear singular integrals.} Trans. Amer. Math. Soc \textbf{212} (1975), 315--331. \msn{0380244}

\bibitem{coifmey:2}
R. R. Coifman and Y. Meyer, \emph{Ondelettes and operateurs III, Operateurs multilineaires.} Actualites Mathematiques, Hermann, Paris (1991).
\msn{1160989}

\bibitem{CKSTT:gwp}
J. Colliander, M. Keel, G. Staffilani, H. Takaoka, and T. Tao, \emph{Global well-posedness and scattering for the energy-critical nonlinear Schr\"odinger equation in $\R^3$.}  Ann. of Math. \textbf{167} (2008), 767--865.
\msn{2415387}

\bibitem{Dodson:d>2}
B. Dodson, \emph{Global well-posedness and scattering for the defocusing, $L^{2}$-critical, nonlinear Schr\"odinger equation when $d \geq 3$.}  Preprint \texttt{arXiv:0912.2467}.

\bibitem{gv:strichartz}
J. Ginibre and G. Velo, \emph{Smoothing properties and retarded estimates for some dispersive evolution equations.} Comm. Math. Phys. \textbf{144} (1992), 163--188. \msn{1151250}

\bibitem{grillakis}
G. Grillakis, \emph{On nonlinear Schr\"odinger equations.} Comm. PDE \textbf{25} (2000), 1827--1844. \msn{1778782}

\bibitem{tao:keel}
M. Keel and T. Tao, \emph{Endpoint Strichartz Estimates.} Amer. J. Math. \textbf{120} (1998), 955--980. \msn{1646048}

\bibitem{KM:NLS}
C. E. Kenig and F. Merle, \emph{Global well-posedness, scattering and blow up for the energy-critical, focusing, nonlinear Schr\"odinger equation in the radial case.} Invent. Math. \textbf{166} (2006), 645--675.
\msn{2257393}

\bibitem{KenigMerle:wave}
C. E. Kenig and F. Merle, \emph{Global well-posedness, scattering and blow-up for the energy critical focusing non-linear wave equation.}  Acta Math. \textbf{201} (2008), 147--212.
\msn{2461508}

\bibitem{KenigMerle:H^1/2}
C. E. Kenig and F. Merle, \emph{Scattering for $\dot H^{1/2}$ bounded solutions to the cubic, defocusing NLS in 3 dimensions.}  Trans. Amer. Math. Soc. \textbf{362} (2010), 1937--1962.
\msn{2574882}

\bibitem{KenigMerle:wave sup}
C. E. Kenig and F. Merle, \emph{Nondispersive radial solutions to energy supercritical non-linear wave equations, with applications.}
Preprint \texttt{arXiv:0810.4834}.

\bibitem{keraani-l2}
S. Keraani, \emph{On the blow-up phenomenon of the critical nonlinear Schr\"odinger equation.} J. Funct. Anal. \textbf{235} (2006), 171--192. \msn{2216444}

\bibitem{KKSV}
R. Killip, S.~Kwon, S. Shao, and M. Visan, \emph{On the mass-critical generalized KdV equation.} Preprint \texttt{arXiv:0907.5412}.

\bibitem{KSV}
R. Killip, B. Stovall, and M. Visan, \emph{Scattering for the cubic Klein--Gordon equation in two space dimensions.}  Preprint \texttt{arXiv:1008.2712}. 

\bibitem{KTV}
R. Killip, T. Tao, and M. Visan, \emph{The cubic nonlinear Schr\"odinger equation in two dimensions with radial data.} J. Eur. Math. Soc. \textbf{11} (2009), 1203--1258.  \msn{2557134}

\bibitem{Berbec}
R. Killip and M. Visan, \emph{The focusing energy-critical nonlinear Schr\"odinger equation in dimensions five and higher.} Amer. J. Math. \textbf{132} (2010), 361--424. \msn{2654778}

\bibitem{ClayNotes} R. Killip and M. Visan, \emph{Nonlinear Schr\"odinger equations at critical regularity.}
Lecture notes prepared for the Clay Mathematics Institute Summer School, Z\"urich, Switzerland, 2008.

\bibitem{Miel} R. Killip and M. Visan, \emph{Energy-supercritical NLS: critical $\dot H^s$-bounds imply scattering.}  To appear in Comm. PDE.  Preprint \texttt{arXiv:0812.2084}.

\bibitem{KV:wave}
R. Killip and M. Visan, \emph{The defocusing energy-supercritical nonlinear wave equation in three space dimensions.}  To appear in Trans. Amer. Math. Soc. Preprint \texttt{arXiv:1001.1761}.

\bibitem{KV:wave radial}
R. Killip and M. Visan, \emph{The radial defocusing energy-supercritical nonlinear wave equation in all space dimensions.}  To appear in Proc. Amer. Math. Soc.  Preprint \texttt{arXiv:1002.1756}.

\bibitem{KVZ}
R. Killip, M. Visan, and X. Zhang, \emph{The mass-critical nonlinear Schr\"odinger equation with radial data in dimensions three and higher.} Analysis and PDE \textbf{1} (2008), 229--266.
\msn{2472890}

\bibitem{RV}
E.~Ryckman and M.~Visan, \emph{Global well-posedness and scattering for the defocusing energy-critical nonlinear Schr\"odinger equation in $\R^{1+4}$.} Amer. J. Math. \textbf{129} (2007), 1--60.
\msn{2288737}

\bibitem{Strichartz}
R. S. Strichartz, \emph{Restriction of Fourier transform to quadratic surfaces and decay of solutions of wave equations.} Duke Math. J. \textbf{44} (1977), 705--774. \msn{0512086}

\bibitem{tao:gwp radial}
T. Tao, \emph{Global well-posedness and scattering for the higher-dimensional energy-critical non-linear Schr\"odinger equation for radial data.} New York J. of Math. \textbf{11} (2005), 57--80.
\msn{2154347}

\bibitem{TVZ:cc}
T. Tao, M. Visan, and X. Zhang, \emph{Minimal-mass blowup solutions of the mass-critical NLS.} Forum Math. \textbf{20} (2008), 881--919.  \msn{2445122}

\bibitem{TVZ:sloth}
T. Tao, M.~Visan, and X. Zhang, \emph{Global well-posedness and scattering for the mass-critical nonlinear Schr\"odinger equation for radial data in high dimensions.} Duke Math. J. \textbf{140} (2007), 165--202.
\msn{2355070}

\bibitem{thesis:art} M. Visan, \emph{The defocusing energy-critical nonlinear Schr\"odinger equation in higher dimensions.} Duke Math. J. \textbf{138} (2007), 281--374. \msn{2318286}

\bibitem{Monica:thesis} M. Visan, \emph{The defocusing energy-critical nonlinear Schr\"odinger equation in dimensions five and higher.} Ph.D. Thesis, UCLA, 2006.  \msn{2709575}
\end{thebibliography}
\end{document}